\pdfoutput=1
\documentclass[a4paper,reqno, oneside]{amsart}
\usepackage[utf8]{inputenc}
\usepackage[T1]{fontenc}
\numberwithin{equation}{section}

\usepackage{mathtools, lmodern, mathabx, shuffle, microtype, bookmark, cleveref} %
\usepackage{mathrsfs}
\usepackage[left=2cm,right=2cm]{geometry}
\usepackage{mleftright}
\mleftright
\usepackage[ocgcolorlinks]{ocgx2}
\hypersetup{citecolor=blue}
\usepackage{placeins}

\usepackage[usenames,dvipsnames,table]{xcolor}
\usepackage{url}
\usepackage{tikz-cd}

\usepackage{enumitem}
\setlist[enumerate]{label=\roman*.}

\usepackage[refpage,notocbasic]{nomencl}

\usepackage[mathlines]{lineno}
\usepackage{etoolbox}
\newcommand*\linenomathpatchAMS[1]{%
  \expandafter\pretocmd\csname #1\endcsname {\linenomathAMS}{}{}%
  \expandafter\pretocmd\csname #1*\endcsname{\linenomathAMS}{}{}%
  \expandafter\apptocmd\csname end#1\endcsname {\endlinenomath}{}{}%
  \expandafter\apptocmd\csname end#1*\endcsname{\endlinenomath}{}{}%
}
\expandafter\ifx\linenomath\linenomathWithnumbers
  \let\linenomathAMS\linenomathWithnumbers
  \patchcmd\linenomathAMS{\advance\postdisplaypenalty\linenopenalty}{}{}{}
\else
  \let\linenomathAMS\linenomathNonumbers
\fi
\linenomathpatchAMS{align}

\def\dparenl{\mathopen{(\mkern-3mu(}}
\def\dparenr{\mathclose{)\mkern-3mu)}}
\newcommand\lie[2]{\mathfrak{g}_{\le #1}\dparenl\R^{#2}\dparenr}

\DeclareMathOperator{\IIS}{IIS}

\theoremstyle{plain}
\newtheorem{theorem}{Theorem}[section]
\newtheorem{conjecture}[theorem]{Conjecture}
\newtheorem{corollary}[theorem]{Corollary}

\newtheorem{lemma}[theorem]{Lemma}
\newtheorem{proposition}[theorem]{Proposition}

\theoremstyle{definition}
\newtheorem{definition}[theorem]{Definition}
\newtheorem{example}[theorem]{Example}

\theoremstyle{remark}
\newtheorem{remark}[theorem]{Remark}

\newcommand{\R}{\mathbb{R}}
\newcommand{\C}{\mathbb{C}}

\newcommand{\Q}{\mathbb{Q}}

\newcommand{\GL}{\operatorname{GL}}
\newcommand{\SO}{\operatorname{SO}}
\renewcommand{\O}{\operatorname{O}}
\newcommand\proj{\operatorname{proj}}

\newcommand{\id}{\operatorname{id}}
\newcommand\SL{\operatorname{SL}}

\def\word#1{\ensuremath{{\color{cyan}\mathtt{#1}}}}
\newcommand\w[1]{\word{#1}}

\newcommand{\K}{\mathcal{K}}
\newcommand{\Skew}[2]{\mathfrak{so}_{#1}(#2)}
\newcommand{\CdSkew}{\C^d\oplus \Skew{d}{\C}}
\newcommand{\RdSkew}{\R^d\oplus \Skew{d}{\R}}

\newcommand{\SOdR}{\SO_d(\R)}

\newcommand{\OiC}{\O^i_d(\C)}
\newcommand{\OdC}{\O_d(\C)}
\newcommand{\OdR}{\O_d(\R)}
\DeclareMathOperator{\ISS}{ISS}

\newcommand{\cc}{\mathbf{c}}
\newcommand{\cctwo}{\mathbf{c}_{\leq 2}}
\newcommand{\ccn}{\mathbf{c}_{\leq n}}

\newcommand{\rhotwo}{\tilde\rho_2}
\newcommand{\rhotwon}{\rho_2}
\newcommand{\rhothree}{\tilde\rho_3}
\newcommand{\rhothreen}{\rho_3}
\newcommand{\rhodn}{\rho_d}
\newcommand{\rhod}{\tilde\rho_d}

\newcommand{\pol}{q}
\newcommand{\polmap}{r}

\newcommand\DOT[2]{#1\cdot #2}

\newcommand\G{{\mathscr G}}
\newcommand\g{{\mathfrak g}}
\newcommand{\lyndon}[1]{\mathscr{L}_{#1}}

\newcommand{\vv}{c}
\newcommand{\m}{c}

\usepackage[linecolor=white,backgroundcolor=white,bordercolor=white,textsize=tiny]{todonotes}
\let\todon\todo
\renewcommand{\todo}[1]{\todon{\color{red}#1}}

\makenomenclature
\begin{document}

\title[The moving frame method for iterated integrals]{The moving frame method for iterated integrals: orthogonal invariants}
\author{Joscha Diehl}
\address{Joscha Diehl\newline Universit\"at Greifswald, Institut f\"ur Mathematik und Informatik, Walther-Rathenau-Str.~47, 17489 Greifswald, Germany.}
\email{joscha.diehl@uni-greifswald.de}
\urladdr{https://diehlj.github.io}
\author{Rosa Preiß}
\address{Rosa Preiß\newline Institut für Mathematik, Technische Universität Berlin}
\email{preiss@math.tu-berlin.de}
\urladdr{http://page.math.tu-berlin.de/~preiss/}
\author{Michael Ruddy}
\address{Michael Ruddy\newline The Data Institute, University of San Francisco}
\email{mruddy@usfca.edu}
\urladdr{https://mgruddy.wixsite.com/home}
\author{Nikolas Tapia}
\address{Nikolas Tapia\newline Weierstra{\ss}-Institut Berlin, Mohrenstr.~39, 10117 Berlin, Germany\newline Technische Universität Berlin, Str. des 17.~Juni 136, 10623 Berlin, Germany.}
\email{tapia@wias-berlin.de}
\urladdr{https://www.wias-berlin.de/people/tapia}
\thanks{R.P. is supported by the European Research Council through CoG-683164}
\thanks{M.R. was supported in part by the Max Planck Institute for Mathematics in the Sciences}
\thanks{N.T. is supported by the DFG MATH$^+$ Excellence Cluster}
\begin{abstract}
    Geometric features robust to noise of curves in Euclidean space are of great interest for various applications such as machine learning and image analysis.
    We apply Fels-Olver's moving frame method (for geometric features) paired with the log-signature transform
    (for robust features) to construct a set of integral invariants under rigid motions for curves in $\R^d$ from the iterated-integral signature. In particular we show that one can algorithmically construct a set of invariants that characterize the equivalence class of the truncated iterated-integrals signature under orthogonal transformations which yields a characterization of a curve in $\R^d$ under rigid motions (and tree-like extensions) and an explicit method to compare curves up to these transformations.
\end{abstract}
\subjclass[2020]{60L10\and 14L24}
\keywords{Signature\and geometric invariants\and moving frame\and orthogonal group.}

\newcommand\e{\mathsf{e}} %

\newcommand\INV{{\mathfrak I}} %
\newcommand{\IM}{\INV_M}

\maketitle

\tableofcontents

\section{Introduction}

A central problem in image science is constructing geometrically relevant features of curves that are robust to noise. In this sense, rigid motions of space make up a natural group of `nuisance' transformations of the data. For this reason, rotation- and translation-invariant features are often desired, for instance, in Human Activity Recognition \cite[Section 6]{morales2017physical}
or in matching contours \cite{zhang2018multi}. Classically, differential invariants such as curvature have been used \cite{HO13} for this purpose, and more recently integral invariants of curves have been of interest \cite{FKK08, DR2019}. In this work we construct a rigid motion-invariant representation of a curve through its \emph{iterated-integrals signature} by applying the \emph{Fels-Olver moving frame method}. We show that this yields sets of integral invariants that characterize the truncated iterated integral signature up to rigid motions.

In \cite{Che54, Che58}, the author used the collection of all iterated integrals to characterize smooth curves, and in \cite{hambly2010uniqueness} the author extended this result to more irregular curves. The modern term for this collection of iterated integrals of a curve is the iterated-integrals signature. It has since been used in various applications such as constructing features for machine learning tasks (see \cite{CK2016} and references therein) and shape analysis \cite{CLT2019, LG2020}.

The Fels-Olver moving frame method, introduced in \cite{FO99}, is a modern generalization of the classical moving frame method formulated by Cartan \cite{C35}. In the general setting of a Lie group $G$ acting on a manifold $M$ a moving frame is defined as a $G$-equivariant map from $M$ to $G$. A moving frame can be re-interpreted as a choice of cross-sections to the orbits of $G$, and hence a unique canonical form for elements of $M$ under $G$. Thus the moving frame method provides a framework for algorithmically constructing $G$-invariants on $M$ that characterize orbits and for determining equivalence of submanifolds of $M$ under $G$.

The moving frame method has been used to construct differential invariants of smooth planar and spatial curves under Euclidean, affine, and projective transformations, and, in certain cases, these differential invariants lead to a \emph{differential signature} which can be used to classify curves under these transformation groups \cite{CO98}. The differential signature has been applied in a variety of image science applications from automatic jigsaw puzzle assembly \cite{HO14} to medical imaging \cite{GS17}. Also in the realm of image science, the moving frame method has been used to construct invariants of grayscale images \cite{BH13, TOT19}.

We consider the induced action of the orthogonal group of rotations on the log-signature of a curve, which provides a compressed representation of a curve obtained by applying the log transform to the iterated-integrals signature, and provide an explicit cross-section for this action. We show that for most curves and any truncation of the curve's log-signature, the orbit is characterized by the value on this cross-section. As a consequence a curve is completely determined up to rigid motions and tree-like extensions by the invariantization of its iterated-integrals signature induced by this cross-section.

This yields a constructive method to compare curves up to rigid motions and to evaluate rotation invariants that characterize the iterated-integrals signature under rotations.
These invariants are constructed from integrals on the curve, and hence are likely to be more noise-resistant than their differential counterparts such as curvature.
One can easily set up an artificial example where this is visible.
Consider for instance the circle of radius $n^{-3/2}$ given by the parameterization $\gamma: [0,1] \rightarrow \R^2$ where 
$$
\gamma(t) = (x(t), y(t)) = \left(\frac{\cos(2\pi n t)}{n^{3/2}},\frac{\sin(2\pi n t)}{n^{3/2}}  \right),
$$
which as $n\rightarrow \infty$, converges to the constant curve (at the origin).
Now the curvature of this curve does \emph{not} converge (in fact, it blows up).
In contrast, the iterated integrals do all converge (to zero) since $\gamma$ converges in variation norm.
Then, the invariants built out of the iterated integrals (\Cref{sec:MFsection}) also
converge to their value on the zero-curve. On this toy example these integral invariants are hence more ``stable''.

Additionally, in contrast to the methods in \cite{DR2019},
the resulting set of integral invariants is shown to uniquely characterize the curve under rotations, and moreover, does so in a minimal fashion. Since the iterated-integrals signature of a curve is automatically invariant to translations,
this provides rigid motion-invariant features of a curve which can be used for applications such as machine learning or shape analysis.

This work is structured as follows. In Section \ref{sec:prelim} we provide background on the iterated-integrals signature and the moving frame method, as well as some facts about algebraic group and invariants.
In Section \ref{sec:complexdetour} and consider the orthogonal action on the second order truncation of the log-signature over the complex numbers. Using tools from algebraic invariant theory, we construct the linear space which will form the basis for the cross-section in the following section. We also provide an explicit set of polynomial invariants that characterize the second order truncation of the log-signature under the orthogonal group. In Section \ref{sec:MFsection} we construct the moving frame map for paths in $\R^d$. In particular, in Section \ref{sec:MF2}, we outline our procedure and results in simple language for paths in $\R^2$ and in Section \ref{sec:towardpoly} we introduce sufficient conditions for the resulting moving frame invariants to be \emph{polynomial}, showing these conditions are satisfied for some values of $d$.
In Section \ref{sec:inv.planar} we detail the moving frame map for planar and space curves, compute some of the resulting integral invariant functions, and illustrate this procedure on a particular curve. Finally in Section \ref{sec:discussion} we discuss some of the interesting questions that arise as a result of our work.

\section{Preliminaries}\label{sec:prelim}

\subsection{The tensor algebra}

Let $d \ge 1$ be an integer.
A \textbf{word}, or {multi-index}, over the alphabet $\{\w{1},\dotsc,\w{d}\}$
is a tuple $w=(\w{w}_1,\dotsc,\w{w}_n)\in\{\w{1},\dotsc,\w d\}^n$ for some integer $n\ge 0$, called its \textbf{length} which is denoted by $|w|$.
As is usual in the literature, we use the short-hand notation $w=\w w_1\dotsm\w w_n$, where the $\w w_i$, words of length one, are called \textbf{letters}.
The \textbf{concatenation} of two words $v,w$ is the word $vw\coloneq \w v_1\dotsm\w v_n\w
w_1\dotsm\w w_m$ of length $|vw|=n+m$.
Observe that this product is associative and non-commutative.
There is a unique element of length zero, called the empty word and denoted by $\e$.
It satisfies $w\e=\e w=w$ for all words $w$.
If we denote by $T(\R^d)$ the real vector space spanned by words, the bilinear extension of the
concatenation product endows it with the structure of an associative (and non-commutative) algebra.
We also note that \(T(\R^d)\) admits the direct sum decomposition
\[
  T(\R^d)=\bigoplus_{k=0}^\infty\operatorname{span}_{\mathbb R}\{w:|w|=k\}.
\]
There is a commutative product on \(T(\R^d)\), known as the \textbf{shuffle product}, recursively
defined by \(\e\shuffle w\coloneq w\eqcolon w\shuffle \e\) and
\[
    v\w i\shuffle w\w j\coloneq(v\shuffle w\w j)\w i+(v\w i\shuffle w)\w j.
\]

The commutator \textbf{bracket} \([u,v]\coloneq uv-vu\) endows \(T(\R^d)\) with the structure of a Lie
algebra.
The \textbf{free Lie algebra over $\R^d$}, denoted by $\g(\R^d)$, can be realized as the following subspace of
$T(\R^d)$,
\begin{align*}
    \g(\R^d)=\bigoplus_{n=1}^\infty W_n
\end{align*}
where \(W_1\coloneq\operatorname{span}_\R \{\w1,\dotsc,\w d\} \cong \R^d\) and \(W_{n+1}\coloneq[W_1,W_n]\).
There are multiple choices of bases for \(\g(\R^d)\), but we choose to work with the
\textbf{Lyndon basis}. A \textbf{Lyndon word} is a word \(h\) such that whenever \(h=uv\), with
\(u,v\neq\e\), then \(u<v\) for the lexicographical order.
We denote the set of Lyndon words over the alphabet \(\{\w1,\dotsc,\w d\}\) by \(\lyndon{d}\)\nomenclature[Ld]{$\lyndon{d}$}{The Lyndon words over the alphabet $\{\w1,\ldots,\w d\}$}.
In particular, \(h\) with \(|h|\ge 2\) is Lyndon if and only if there exist non-empty Lyndon words \(u\) and \(v\)
such that \(u<v\) and \(h=uv\). Although there might be multiple choices for this factorization, the one with
\(v\) as long as possible is called the \textbf{standard factorization} of \(h\).
The Lyndon basis \(b_{\w{h}}\) is recursively defined by setting \(b_{\w{i}}=\w i\) and
\(b_{h}=[b_u,b_v]\) for all Lyndon words \(h\) with \(|h|\ge 2\), where \(h=uv\) is the standard
factorization.
\begin{example}
	Suppose \(d=2\). The Lyndon words up to length 4, their standard factorizations and the associated
	basis elements are
	\begin{table}[!ht]
		\centering
		\begin{tabular}{|c|c|c|c|}
			\hline
			\(h\)&\(u\)&\(v\)&\(b_h\)\\
			\hline
			\w{1}&---&---&\w{1}\\
			\w{2}&---&---&\w{2}\\
			\w{12}&\w{1}&\w{2}&\([\w{1},\w{2}]\)\\
			\w{112}&\w{1}&\w{12}&\([\w{1},[\w{1},\w{2}]]\)\\
			\w{122}&\w{12}&\w{2}&\([[\w1,\w2],\w2]\)\\
			\w{1112}&\w{1}&\w{112}&\([\w1,[\w1,[\w1,\w2]]]\)\\
			\w{1122}&\w{1}&\w{122}&\([\w1,[[\w1,\w2],\w2]]\)\\
			\w{1222}&\w{122}&\w{2}&\([[[\w1,\w2],\w2],\w2]\)\\
			\hline
		\end{tabular}
	\end{table}
	\label{ex:lyndon}
\end{example}

Elements of the dual space \(T\dparenl\R^d\dparenr\coloneq T(\R^d)^*\) can be identified with formal word
series. For \(F\in T\dparenl\R^d\dparenr\) we write
\[
    F=\sum_w\langle F,w\rangle w.
\]
In particular, we have no growth requirement for the \textbf{coefficients} \(\langle F,w\rangle \in \R\).
The above expression is meant only as a notation for treating the values of \(F\) on words as a single object.
This space can be endowed with a multiplication given, for \(F,G\in T\dparenl\R^d\dparenr\), by
\begin{equation}
	FG=\sum_{w}\left( \sum_{uv=w}\langle F,u\rangle\langle G,v\rangle \right)w.
	\label{eq:convprod}
\end{equation}
Observe that since there is a finite number of pairs of words \(u,v\) such that \(uv=w\), the
coefficients of \(FG\) are well defined for all \(w\), so the above formula is an honest element of
\(T\dparenl\R^d\dparenr\).
It turns out that this product is dual to the \textbf{deconcatenation coproduct} \(\Delta\colon
T(\R^d)\to T(\R^d)\otimes T(\R^d)\) given by
\[
	\Delta w=\sum_{uv=w}u\otimes v,
\]
in the sense that
\[
	\langle FG,w\rangle=\langle F\otimes G,\Delta w\rangle
\]
for all words. This formula is nothing but \cref{eq:convprod} componentwise.

There are two distinct subsets of \(T\dparenl\R^d\dparenr\) that will be important in what follows.
The first one is the subspace \(\g\dparenl\R^d\dparenr\) of \textbf{infinitesimal
characters}, formed by linear maps \(F\) such that \(\langle F,u\shuffle v\rangle=0\) whenever \(u\)
and \(v\) are non-empty words, and such that \(\langle F,\e\rangle=0\).
It can be identified with the dual space
\[
    \g\dparenl\R^d\dparenr\cong\g(\R^d)^*\cong\prod_{n=1}^\infty W_n.
\]
It is a Lie algebra under the commutator bracket \([F,G]=FG-GF\).
The second one is the set \(\G\dparenl\R^d\dparenr\) of \textbf{characters}, i.e., linear maps
\(F\) such that \(\langle F,u\shuffle v\rangle=\langle F,u\rangle\langle F,v\rangle\) for all
\(u,v\in T(\R^d)\).

We may define an exponential map \(\exp\colon\g\dparenl\R^d\dparenr\to\G\dparenl\R^d\dparenr\) by its power series
\[
    \exp(F)\coloneq\sum_{n=0}^\infty\frac{1}{n!}F^n.
\]
On a single word, the map is given by
\[
    \langle\exp(F),w\rangle=\sum_{n=0}^\infty\frac{1}{n!}\left( \sum_{v_1\dotsm v_n=w}\langle
    F,v_1\rangle\dotsm\langle F,v_n\rangle \right),
\]
and since \(F\) vanishes on the empty word, all terms with \(n>|w|\) also vanish, so that the sum is
always finite.
Therefore, \(\exp(F)\) is a well defined element of \(T\dparenl\R^d\dparenr\).
It can be shown that the image of \(\exp\) is equal to $\G\dparenl\R^d\dparenr$ and that it is a bijection onto its image,
with inverse \(\log\colon\G\dparenl\R^d\dparenr\to\g\dparenl\R^d\dparenr\) defined by
\[
    \log(G)\coloneq\sum_{n=1}^\infty\frac{(-1)^{n-1}}{n}(G-\varepsilon)^n
\]
where \(\varepsilon\) is the unique linear map such that \(\langle\varepsilon,\e\rangle=1\) and zero
otherwise.

Finally, we remark some freeness properties of the tensor algebra and its subspaces.
Below,
\[
	T^+(\R^d)=\bigoplus_{n>0}(\R^d)^{\otimes n}
\]
denotes the reduced tensor algebra over \(\R^d\).
The following result can be found in \cite[Corollary 2.1]{FPT2016}.
\begin{proposition}
	Let \(\phi\colon T^+(\R^d)\to\R^e\) be a linear map.
	There exists a unique extension \(\tilde\phi\colon T(\R^d)\to T(\R^e)\) such that
	\[
		(\tilde\phi\otimes\tilde\phi)\circ\Delta=\Delta\circ\tilde\phi
	\]
	and \(\pi\circ\tilde{\phi}=\phi\), where \(\pi\colon T(\R^e)\to\R^e\) denotes the projection of
	\(T(\R^e)\) onto \(\R^e\), orthogonal to \(\R\e\) and \(\bigoplus_{n>2}\operatorname{span}_\R\{w:|w|=n\}\).
	Moreover, it is given by
	\[
		\tilde\phi(w)=\sum_{k=1}^{|w|}\sum_{v_1\dotsm v_k=w}\phi(v_1)\dotsm\phi(v_k).
	\]

	By transposition, we obtain a unique map \(\Phi\colon T\dparenl\R^e\dparenr\to T\dparenl\R^d\dparenr\) such that
	\[
		\Phi(FG)=\Phi(F)\Phi(G)
	\]
	for all \(F,G\in T\dparenl\R^e\dparenr\).
	In particular,
	\begin{equation}
		\Phi(F)=\sum_w\left( \sum_{k=1}^{|w|}\sum_{v_1\dotsm v_k=w}\langle
		F,\phi(v_1)\dotsm\phi(v_k)\rangle\right)w.
	\end{equation}
	\label{prp:fext}
\end{proposition}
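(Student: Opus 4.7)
The plan is to recognize the statement as the cofreeness of the tensor coalgebra \(T(\R^e)\), cogenerated by \(\R^e\), applied to \(T(\R^d)\) viewed as a conilpotent coalgebra under deconcatenation. First I would set \(\tilde\phi(\e):=\e\) (which is forced by the group-likeness of \(\e\) and the coalgebra morphism property) and then establish uniqueness. Iterating \((\tilde\phi\otimes\tilde\phi)\circ\Delta=\Delta\circ\tilde\phi\) yields
\[
    \Delta^{(k-1)}\circ\tilde\phi=\tilde\phi^{\otimes k}\circ\Delta^{(k-1)},\qquad k\ge 1,
\]
with \(\Delta^{(k-1)}(w)=\sum_{v_1\cdots v_k=w}v_1\otimes\cdots\otimes v_k\). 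Composing with \(\pi^{\otimes k}\) on the left and substituting \(\pi\circ\tilde\phi=\phi\) on the non-empty factors (and \(\pi(\tilde\phi(\e))=\pi(\e)=0\) on empty ones) gives
\[
    \pi^{\otimes k}\Delta^{(k-1)}\tilde\phi(w)=\sum_{\substack{v_1\cdots v_k=w\\ v_i\neq\e}}\phi(v_1)\otimes\cdots\otimes\phi(v_k).
\]
A short direct check shows that \(\pi^{\otimes k}\circ\Delta^{(k-1)}\) extracts the length-\(k\) homogeneous component of any element of \(T(\R^e)\): applied to a length-\(n\) tensor, only the factorization into \(k\) single letters can survive \(\pi^{\otimes k}\), and such a factorization exists precisely when \(n=k\). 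Summing over \(k=1,\ldots,|w|\) reconstructs \(\tilde\phi(w)\) as the claimed formula, so uniqueness holds.

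For existence I would \emph{define} \(\tilde\phi\) by the displayed formula and verify the two conditions. The identity \(\pi\circ\tilde\phi=\phi\) follows at once because the only contribution of length one comes from \(k=1\) and \(v_1=w\). The coalgebra morphism property is a direct combinatorial check: each summand \(\phi(v_1)\cdots\phi(v_k)\) is a concatenation of \(k\) letters in \(T(\R^e)\), so by definition of the deconcatenation coproduct
\[
    \Delta\bigl(\phi(v_1)\cdots\phi(v_k)\bigr)=\sum_{j=0}^{k}\phi(v_1)\cdots\phi(v_j)\otimes\phi(v_{j+1})\cdots\phi(v_k),
\]
with the empty product interpreted as \(\e\). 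Reindexing by pairing a factorization \(v_1\cdots v_k=w\) and a cut position \(j\) with the sub-factorizations of \(a:=v_1\cdots v_j\) and \(b:=v_{j+1}\cdots v_k\), the sum rearranges into \(\sum_{ab=w}\tilde\phi(a)\otimes\tilde\phi(b)=(\tilde\phi\otimes\tilde\phi)\Delta w\), as required.

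The transpose part is then purely formal. For every \(F,G\in T\dparenl\R^e\dparenr\) and every word \(w\),
\[
    \langle\Phi(FG),w\rangle=\langle FG,\tilde\phi(w)\rangle=\langle F\otimes G,\Delta\tilde\phi(w)\rangle=\langle F\otimes G,(\tilde\phi\otimes\tilde\phi)\Delta w\rangle=\langle\Phi(F)\Phi(G),w\rangle,
\]
using \cref{eq:convprod} at the outer equalities and the coalgebra morphism property of \(\tilde\phi\) in the middle. The closed-form expression for \(\Phi(F)\) is then obtained by plugging the formula for \(\tilde\phi(w)\) into \(\langle\Phi(F),w\rangle=\langle F,\tilde\phi(w)\rangle\). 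The main obstacle is purely book-keeping: one has to keep the empty-word conventions coherent so that the range of the factorization sums is precisely what the projection \(\pi\) enforces; once that is done, everything reduces to cofreeness of the deconcatenation coalgebra.
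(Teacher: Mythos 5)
The paper does not prove this proposition; it cites \cite[Corollary 2.1]{FPT2016} and states the formulas. So there is no internal proof to compare against, and I can only assess your argument on its own terms. It is correct, and it is the natural argument: one recognizes $(T(\R^e),\Delta)$ as the cofree conilpotent coalgebra cogenerated by $\R^e$ and verifies its universal property by extracting homogeneous components via $\pi^{\otimes k}\circ\Delta^{(k-1)}$, then dualizes. This is almost certainly the content of the cited corollary in \cite{FPT2016}.

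Two small points deserve tightening. First, the parenthetical that $\tilde\phi(\e)=\e$ is ``forced by group-likeness'' is slightly overstated: the group-likes of $T(\R^e)$ for $\Delta$ are exactly $0$ and $\e$, so group-likeness alone leaves two options, and you need $\pi\circ\tilde\phi=\phi$ (with $\phi\neq0$) to exclude $\tilde\phi(\e)=0$, since $\tilde\phi(\e)=0$ would force $\Delta\tilde\phi(\w i)=0$ and hence $\tilde\phi(\w i)=0$, contradicting $\pi\tilde\phi(\w i)=\phi(\w i)$. (For the degenerate case $\phi=0$ uniqueness as literally stated actually fails, since both $\tilde\phi=0$ and $\tilde\phi=\varepsilon(\cdot)\e$ work; the intended reading is that $\tilde\phi$ is additionally a counit-preserving coalgebra map, which your normalization $\tilde\phi(\e)=\e$ enforces.) Second, in the uniqueness step you recover the length-$k$ components of $\tilde\phi(w)$ for $k\ge1$ but do not explicitly rule out a length-$0$ (scalar multiple of $\e$) component in $\tilde\phi(w)$ for $w\neq\e$; this needs a one-line induction using the $\e\otimes\e$ coefficient of $\Delta\tilde\phi(w)=\sum_{uv=w}\tilde\phi(u)\otimes\tilde\phi(v)$, which gives $\alpha(w)=2\alpha(w)$ and hence $\alpha(w)=0$. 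With these touches the argument is complete; the existence verification, the re-indexing of factorizations by cut position, and the formal dualization step are all correct.
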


\subsection{The iterated-integrals signature}
\label{ss:iis}

The iterated-integrals signature of (smooth enough) paths was
introduced by Chen for homological considerations on loop space, \cite{Chen1954}.
It played a vital role in the rough path analysis of Lyons,
a pathwise approach to stochastic analysis, \cite{Lyons1998differential}.
Recently it has found applications in statistics and machine learning,
where it serves as a method of feature extraction for possibly non-smooth time-dependent data.

\newcommand\CURVE{Z}
Let $\CURVE = (\CURVE^\w{1}, \dots, \CURVE^\w{d}) \colon[0,1]\to\R^d$ be an absolutely continuous path.%
\footnote{One can get away with much less regularity, see \cite{Lyons1998differential}. Since our considerations are purely algebraic, there is no loss in restricting to 'smooth' paths.}
Given a word $w=\w{w}_1\dotsm\w{w}_n$, define
\begin{align}
  \label{eq:IIS}
  \langle\IIS(\CURVE), w\rangle \coloneq\idotsint\limits_{0<s_1<\dotsb<s_n<1}\dot \CURVE^{\w{w}_1}(s_1)\dotsm \dot{\CURVE}^{\w{w}_n}(s_n)\,\mathrm ds_1\dotsm\mathrm ds_n\in\R.
\end{align}
This definition has a unique linear extension to $T(\R^d)$.
We obtain thus an element $\IIS(\CURVE) \in T\dparenl\R^d\dparenr$\nomenclature[IIS]{$\IIS(\CURVE)$}{The iterated-integrals signature of the curve $\CURVE$}, called the \textbf{iterated-integrals signature (IIS)} of $\CURVE$.

It was shown by Ree \cite{Ree1958} that the coefficients of $\IIS(\CURVE)$ satisfy the so-called \textbf{shuffle relations}:
\begin{align*}
  \langle \IIS(\CURVE), v \rangle\langle \IIS(\CURVE), w \rangle =\langle \IIS(\CURVE), {v\shuffle w}\rangle.
\end{align*}
In other words, \(\IIS(\CURVE)\in\G\dparenl\R^d\dparenr\).

As a consequence of the shuffle relation one obtains that the \textbf{log-signature}
\(\log(\IIS(\CURVE))\)\nomenclature[log(IIS)]{$\log(\IIS(\CURVE))$}{The log-signature of the curve $\CURVE$} is a \textbf{Lie series}, i.e., an element of \(\g\dparenl\R^d\dparenr\).
Moreover, the identity $\IIS(\CURVE) = \exp\left( \log( \IIS(\CURVE) ) \right)$ holds.
The log-signature therefore contains the same amount of information as the signature itself;
it in fact is a minimal (linear) depiction of it.\footnote{Minimality follows from Chow's theorem, \cite[Theorem 7.28]{friz2010multidimensional}.}

The entire iterated-integrals signature $\IIS(\CURVE)$ is an infinite dimensional object,
and hence can never actually be numerically computed.
We now provide more detail on the truncated, finite-dimensional setting.

For each integer \(N\ge 1\), the subspace \(I_n\subset T\dparenl\R^d\dparenr\) generated by formal
series such that \(\langle F,w\rangle=0\) for all words with \(|w|\le N\) is a two-sided ideal, that
is, the inclusion
\[
	I_n T\dparenl\R^d\dparenr+T\dparenl\R^d\dparenr I_n\subset I_n
\]
holds.
Therefore, the quotient space \(T_{\le n}\dparenl\R^d\dparenr\coloneq T\dparenl\R^d\dparenr/I_n\) inherits an algebra structure from
\(T\dparenl\R^d\dparenr\).
Moreover, it can be identified with the direct sum
\[
	T_{\le n}\dparenl\R^d\dparenr\cong\bigoplus_{k=0}^N\operatorname{span}_{\mathbb R}\{ w:|w|=k\}.
\]
We denote by \(\proj_{\le n}\colon T\dparenl\R^d\dparenr\to T_{\le n}\dparenl\R^d\dparenr\)\nomenclature[projn]{$\proj_{\le n}$}{The canonical projection $\proj_{\le n}\colon T\dparenl\R^d\dparenr\to T_{\le n}\dparenl\R^d\dparenr$} the
canonical projection.

Denote with \(\lie{n}d\)\nomenclature[g<n]{$\lie{n}d$}{The free step-$n$ nilpotent Lie algebra over $\R^d$} the \textbf{free step-\(N\) nilpotent Lie algebra} (over $\R^d$).
It can be realized as the following subspace of $T_{\le n}\dparenl\R^d\dparenr$, see \cite[Section 7.3]{friz2010multidimensional},
\begin{align*}
	\lie{n}d=\bigoplus_{k=1}^N W_k,
\end{align*}
where, as before  \(W_1\coloneq\operatorname{span}_\R \{ \w{i} : i =1 ,\dots, d\} \cong \R^d\) and \(W_{n+1}\coloneq[W_1,W_n]\).
In the case of $N=2$ this reduces to
\begin{align*}
  W_1 \oplus W_2
  \cong
  \RdSkew,
\end{align*}
where we denote with $\mathfrak{so}(d,\R)$\nomenclature[so(d,R)]{$\mathfrak{so}(d,\R)$}{The space of skew-symmetric $\R^{d\times d}$ matrices} the space of skew-symmetric $d\times d$ matrices.
Indeed,
an isomorphism is given by
\begin{equation}\label{eq:lie2isom}
  \sum_{1\le i \le d} c_{i} \w{i}
  +
  \sum_{1\le i<j \le d} c_{ij} \left[ \w{i}, \w{j} \right]
  \,\,\mapsto\,\,
  \left(
  \begin{bmatrix}
    c_{1}\\
    \vdots\\
    c_{d}\\
  \end{bmatrix},
  \begin{bmatrix}
    0&  c_{12} & \cdots &c_{1d}\\    
    - c_{12}& 0 & \cdots &c_{2d}\\    
    \cdots& \cdots & \ddots &\cdots\\    
    -  c_{1d}& -c_{2d} & \cdots &0
  \end{bmatrix}\right).
\end{equation}
We remark that the coefficients \(c_{i}\) and \(c_{ij}\) are the coordinates\footnote{These are often referred to as coordinates of the first kind, see \cite{kawski2011chronological,owren2001integration}} with respect to
the Lyndon basis (see \Cref{ex:lyndon}).

The linear space $\lie{n}d$ is in bijection to its image under the exponential map.
This image, denoted $\G_{\le n}(\R^d)\coloneq\exp\lie{n}d$, is the \textbf{free step-\(N\) nilpotent group} (over $\R^d$).
It is exactly the set of all points in $T_{\le n}(\R^d)$ that can be reached
by the truncated signature map, that is (see \cite[Theorem 7.28]{friz2010multidimensional})
\begin{align*}
  \G_{\le n}\dparenl \R^d\dparenr = \{ \proj_{\le n} \IIS(\CURVE) \mid \text{$\CURVE:[0,T]\to\R^d$ is rectifiable} \} \subset
	T_{\le n}\dparenl\R^d\dparenr.
\end{align*}

Equivalently, the log-signature completely fills out the Lie algebra \(\g\dparenl\R^d\dparenr\). We have
\begin{equation*}
    \log\IIS(\CURVE)=\sum_{h\in\lyndon{d}} c_{h}(\CURVE)\,b_h,
\end{equation*}
where $c_{h}(\CURVE)=\langle\ISS(\CURVE),\zeta_h\rangle$ for uniquely determined $\zeta_h\in T(\R^d)$. This inspires us to also denote the coordinates of an arbitary $\ccn\in\lie{n}d$ by $c_{h}$\nomenclature[ch]{$c_{h}$}{The coordinate of $\ccn\in\lie{n}d$ corresponding to the Hall basis element $b_h$}, were analogously
\begin{equation*}
  \ccn=\sum_{\substack{h\in\lyndon{d},\\|h|\leq n}} c_{h}\,b_h.
\end{equation*}

\begin{example}[Moment curve]
  \label{ex:momentCurve}

  We consider the \textbf{moment curve} in dimension $3$,
  which is the curve $\CURVE: [0,1] \to \R^3$ given as
  \begin{align*}
    X_t \coloneq (t,t^2,t^3).
  \end{align*}
  It traces out part of the twisted cubic \cite[Example 1.10]{harris2013algebraic},
  see also \cite[Sect. 15]{karlin1953geometry}.
  
  We calculate, as an example,
  \begin{align*}
    \langle \ISS(\CURVE), \w{3}\w{2} \rangle
    &=
    \int_0^1 \int_0^s 3 r^2 dr 2 s ds \\
    &=
    2 \int_0^1 s^4 ds
    =
    \frac{2}{5}.
  \end{align*}

  The entire step-$2$ truncated signature is
  \begin{align*}
    \proj_{\le 2}\IIS(\CURVE)
    =
		\left(
      \begin{bmatrix}1\\1\\1\end{bmatrix},
    \begin{bmatrix}\frac12&\frac46&\frac34\\\frac26&\frac12&\frac6{10}\\\frac14&\frac4{10}&\frac12\end{bmatrix}\right),
  \end{align*}
  and the step-$2$ truncated log-signature is
	\[
    \proj_{\le 2}\log\IIS(\CURVE)
    =
		\left(\begin{bmatrix}1\\1\\1\end{bmatrix},\begin{bmatrix}0&\frac16&\frac14\\-\frac16&0&\frac1{10}\\-\frac14&-\frac1{10}&0\end{bmatrix}\right).
	\]
\end{example}

\subsection{Invariants}
\label{ss:invariants}

In this work we are interested in functions on paths that factor through
the signature and that are invariant
to a group $G$ acting on the path's ambient space $\R^d$.
We will mainly focus on $G = \O_d(\R)$, acting linearly on $\R^d$.
The action of \(A\in G\) on an \(\R^d\)-valued path \(\CURVE\) is given by \(AZ\colon[0,1]\to\R^d\), \(t\mapsto AZ_t\).

Using \Cref{prp:fext}, we can extend the action of $G$ on $\R^d$ to a \textbf{diagonal action} on words.
The matrix \(A^\top\) acts on single letters by
\begin{equation*}
	\phi_{A^\top}(\w{i})=\sum_j a_{ji} \w{j},
\end{equation*}
and we set \(\phi_{A^{\top}}(w)=0\) whenever \(|w|\ge 2\).
By \Cref{prp:fext}, this induces an endomorphism \(\tilde\phi_{A^\top}\colon T(\R^d)\to
T(\R^d)\), satisfying
\begin{equation}
	\tilde\phi_{A^\top}(\w w_1\dotsm\w w_n)=\phi_{A^\top}(\w w_1)\dotsm\phi_{A^\top}(\w w_n).
	\label{eq:phiconc}
\end{equation}
In particular, \(\tilde\phi_{A^\top}(u\w i)=\tilde\phi_{A^\top}(u)\tilde\phi_{A^\top}(\w i)\) for
all words \(u\) and letters \(\w i\in\{\w1,\dotsc,\w d\}\).
In order to be consistent with the notation in \cite{DR2019}, we will denote its transpose map ($\Phi_A$ in \Cref{prp:fext}) just
by \(A\colon T\dparenl\R^d\dparenr\to T\dparenl\R^d\dparenr\).
\begin{lemma}
	The map \(\tilde\phi_{A^\top}\colon T(\R^d)\to T(\R^d)\) is a shuffle morphism, that is,
	\[
		\tilde\phi_{A^\top}(u\shuffle v)=\tilde\phi_{A^\top}(u)\shuffle\tilde\phi_{A^\top}(v)
	\]
	for all words \(u,v\).
	\label{lem:Ashuffle}
\end{lemma}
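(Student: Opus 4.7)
The plan is to proceed by induction on $n \coloneq |u|+|v|$, exploiting the fact that, by \cref{eq:phiconc}, the map $\tilde\phi_{A^\top}$ is multiplicative for the concatenation product (and that, on single letters, $\tilde\phi_{A^\top}$ agrees with $\phi_{A^\top}$ by the formula in \Cref{prp:fext}). The base case $u = \e$ or $v = \e$ is trivial since $\tilde\phi_{A^\top}(\e) = \e$ and $w \shuffle \e = w$.

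For the inductive step I would write $u = u'\w i$ and $v = v'\w j$. The recursive definition of shuffle yields
$\tilde\phi_{A^\top}(u \shuffle v) = \tilde\phi_{A^\top}\bigl((u' \shuffle v)\w i\bigr) + \tilde\phi_{A^\top}\bigl((u \shuffle v')\w j\bigr)$.
Concatenation-multiplicativity rewrites this as
$\tilde\phi_{A^\top}(u' \shuffle v)\,\phi_{A^\top}(\w i) + \tilde\phi_{A^\top}(u \shuffle v')\,\phi_{A^\top}(\w j)$,
and invoking the induction hypothesis on each summand replaces the $\tilde\phi_{A^\top}(\cdot \shuffle \cdot)$ terms by shuffle products of their $\tilde\phi_{A^\top}$-images. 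The remaining task is to identify the resulting expression with
$\tilde\phi_{A^\top}(u) \shuffle \tilde\phi_{A^\top}(v) = \bigl(\tilde\phi_{A^\top}(u')\phi_{A^\top}(\w i)\bigr) \shuffle \bigl(\tilde\phi_{A^\top}(v')\phi_{A^\top}(\w j)\bigr)$.

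The one step requiring some care is an extended form of the recursive shuffle identity: for any $X, Y \in T(\R^d)$ and any $\alpha, \beta$ that are linear combinations of single letters,
$(X\alpha) \shuffle (Y\beta) = (X \shuffle Y\beta)\alpha + (X\alpha \shuffle Y)\beta$.
For single-letter $\alpha, \beta$ this is precisely the definition of $\shuffle$, and the general case follows by bilinearity of both sides in $\alpha$ and $\beta$. Applying this identity with $X = \tilde\phi_{A^\top}(u')$, $Y = \tilde\phi_{A^\top}(v')$, $\alpha = \phi_{A^\top}(\w i) = \sum_k a_{ki}\w k$, $\beta = \phi_{A^\top}(\w j)$ closes the induction. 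This small bilinearity check is the only real content of the proof; a more conceptual alternative is to observe that $\tilde\phi_{A^\top}$ coincides with the functorial tensor-power extension $T(A^\top)\colon T(\R^d)\to T(\R^d)$ sending $\w w_1\cdots\w w_n$ to $(A^\top \w w_1)\otimes\cdots\otimes(A^\top \w w_n)$, for which the shuffle-morphism property is a standard general fact.
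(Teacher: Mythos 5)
Your proof is correct and follows essentially the same route as the paper's: induction on $|u|+|v|$, using concatenation-multiplicativity of $\tilde\phi_{A^\top}$ and the recursive definition of the shuffle. In fact you are slightly more careful than the paper, which silently uses the bilinearly extended identity $(X\alpha)\shuffle(Y\beta)=(X\shuffle Y\beta)\alpha+(X\alpha\shuffle Y)\beta$ when passing from the third to the fourth line of its computation, whereas you state and justify it explicitly.
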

\begin{proof}
	We proceed by induction on \(|u|+|v|\ge 0\).
	If \(|u|+|v|=0\) then necessarily \(u=v=\e\), and the identity becomes
	\[
		\tilde\phi_{A^\top}(\e\shuffle\e)=\tilde\phi_{A^\top}(\e)=\e=\e\shuffle\e=\tilde\phi_{A^\top}(\e)\shuffle\tilde\phi_{A^\top}(\e),
	\]
	which is true by definition.
	Now, suppose that the identity is true for all words \(u',v'\) with \(|u'|+|v'|<n\).
	If \(|u|+|v|=n\) we suppose, without loss of generality, that \(u=u'\w i, v=v'\w j\) for some
	(possibly empty) words \(u',v'\) with \(|u'|+|v'|<n\).
	Then
	\begin{align*}
		\tilde\phi_{A^\top}(u\shuffle v)&= \tilde\phi_{A^\top}(u'\w i\shuffle v'\w j)\\
		&= \tilde\phi_{A^\top}(u'\shuffle v'\w
		j)\tilde\phi_{A^\top}(\w i)+\tilde\phi_{A^\top}(u'\w i\shuffle v')\tilde\phi_{A^\top}(\w j)\\
		&= (\tilde\phi_{A^\top}(u')\shuffle\tilde\phi_{A^\top}(v'\w j))\tilde\phi_{A^\top}(\w
		i)+(\tilde\phi_{A^\top}(u'\w i)\shuffle\tilde\phi_{A^\top}(v'))\tilde\phi_{A^\top}(\w j)\\
		&= \tilde\phi_{A^\top}(u'\w i)\shuffle\tilde\phi_{A^\top}(v'\w j)\\
		&= \tilde\phi_{A^\top}(u)\shuffle\tilde\phi_{A^\top}(v).\qedhere
	\end{align*}
\end{proof}
\begin{remark}
	\Cref{lem:Ashuffle} is a special case of \cite[Theorem 1.2]{CP2018}.
\end{remark}
\begin{corollary}
  Let $A \in G$.
  \begin{enumerate}
    \item 
      The character group is invariant under \(A\), that is, \(A\cdot\G\dparenl\R^d\dparenr\subset\G\dparenl\R^d\dparenr\).

    \item 
      The restriction of \(A\) to \(\g\dparenl\R^d\dparenr\) is a Lie endomorphism.
      In particular, the free Lie algebra is invariant under \(A\), that is, \(A\cdot\g\dparenl\R^d\dparenr\subset\g\dparenl\R^d\dparenr\).

    \item 
      $\log: \G\dparenl\R^d\dparenr \to \g\dparenl\R^d\dparenr$ is an equivariant map.
  \end{enumerate}
	\label{crl:equivariant}
\end{corollary}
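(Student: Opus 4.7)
The plan is to exploit the duality between $A\colon T\dparenl\R^d\dparenr\to T\dparenl\R^d\dparenr$ and $\tilde\phi_{A^\top}\colon T(\R^d)\to T(\R^d)$, together with the two structural facts already in hand: $\tilde\phi_{A^\top}$ is a shuffle morphism (\Cref{lem:Ashuffle}), and $A$ is a convolution algebra morphism $A(FG)=A(F)A(G)$ by \Cref{prp:fext}. Throughout I will use the pairing identity $\langle AF,w\rangle=\langle F,\tilde\phi_{A^\top}(w)\rangle$ and the observation that, since $\tilde\phi_{A^\top}$ is built by concatenating images of single letters, it preserves word length; in particular $\tilde\phi_{A^\top}(\e)=\e$, and non-empty words are sent to combinations of non-empty words.

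For (i), given $F\in\G\dparenl\R^d\dparenr$, for any words $u,v$ I would compute
\[
    \langle AF,u\shuffle v\rangle=\langle F,\tilde\phi_{A^\top}(u)\shuffle\tilde\phi_{A^\top}(v)\rangle=\langle F,\tilde\phi_{A^\top}(u)\rangle\langle F,\tilde\phi_{A^\top}(v)\rangle=\langle AF,u\rangle\langle AF,v\rangle,
\]
where the first equality uses duality and \Cref{lem:Ashuffle} and the second uses that $F$ is a character. The normalization $\langle AF,\e\rangle=1$ is automatic from $\tilde\phi_{A^\top}(\e)=\e$.

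For (ii) the infinitesimal-character property is verified analogously: length preservation guarantees that $\tilde\phi_{A^\top}(u)\shuffle\tilde\phi_{A^\top}(v)$ lies in the span of shuffles of non-empty words whenever $u,v$ are non-empty, on which $F$ vanishes, and $\langle AF,\e\rangle=\langle F,\e\rangle=0$. That the restriction is a Lie endomorphism is then immediate from the algebra-morphism identity: $A[F,G]=A(FG)-A(GF)=AF\cdot AG-AG\cdot AF=[AF,AG]$.

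Finally, for (iii), I would first check $A\varepsilon=\varepsilon$, a one-line consequence of length preservation since $\langle A\varepsilon,w\rangle=\langle\varepsilon,\tilde\phi_{A^\top}(w)\rangle$ vanishes unless $|w|=0$. Plugging into the power-series definition of $\log$ and using $A((G-\varepsilon)^n)=(AG-\varepsilon)^n$ from the algebra-morphism property then yields $A\log(G)=\log(AG)$, with coefficient-wise convergence automatic since only finitely many $n$ contribute on any given word. The whole argument is essentially mechanical once \Cref{lem:Ashuffle} and \Cref{prp:fext} are invoked; the only thing to watch is the length preservation of $\tilde\phi_{A^\top}$ and its effect on $\e$, so I do not anticipate a genuine obstacle.
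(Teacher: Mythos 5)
Your proof is correct and follows essentially the same route as the paper's: duality $\langle A\cdot F,w\rangle=\langle F,\tilde\phi_{A^\top}(w)\rangle$ plus \Cref{lem:Ashuffle} for (i) and the infinitesimal-character part of (ii), the convolution-algebra morphism property $A(FG)=A(F)A(G)$ for the Lie-endomorphism part of (ii), and $A\cdot\varepsilon=\varepsilon$ pushed through the $\log$ power series for (iii). The only difference is that you spell out the length-preservation of $\tilde\phi_{A^\top}$ to justify $A\cdot\varepsilon=\varepsilon$ and the vanishing in (ii), whereas the paper leaves those as implicit; otherwise the arguments coincide step for step.
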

\begin{proof}~
  \begin{enumerate}
    \item 
			Let \(F\in\G\dparenl\R^d\dparenr\), and \(u,v\) be words. Then
			\begin{align*}
				\langle A\cdot F,u\shuffle v\rangle&= \langle F,\tilde\phi_{A^\top}(u\shuffle v)\rangle\\
				&= \langle F,\tilde\phi_{A^\top}(u)\shuffle\tilde\phi_{A^\top}(v)\rangle\\
				&= \langle F,\tilde\phi_{A^\top}(u)\rangle\langle F,\tilde\phi_{A^\top}(v)\rangle\\
				&= \langle A\cdot F,u\rangle\langle A\cdot F,v\rangle,
			\end{align*}
			that is, \(A\cdot F\in\G\dparenl\R^d\dparenr\).

    \item 
	Since \(A\cdot(FG)=(A\cdot F)(A\cdot G)\), \(A\) is automatically a Lie
	morphism.
	Now we check that \(A\cdot F\in\mathfrak g\dparenl\R^d\dparenr\) whenever \(F\in\mathfrak
	g\dparenl\R^d\dparenr\).
	It is clear that \(\langle A\cdot F,\e\rangle=\langle F,\e\rangle=0\).
	Now, if \(u,v\) are non-empty words, then
	\begin{align*}
		\langle A\cdot F,u\shuffle v\rangle&= \langle F,\tilde\phi_{A^\top}(u\shuffle v)\rangle\\
		&= \langle F,\tilde\phi_{A^\top}(u)\shuffle\tilde\phi_{A^\top}(v)\rangle\\
		&= 0,
	\end{align*}
	i.e. \(A\cdot F\in\mathfrak g\dparenl\R^d\dparenr\).

	\item Let \(G\in\G\dparenl\R^d\dparenr\). Then, since \(A\cdot\varepsilon=\varepsilon\) we get
    \begin{align*}
      \log( A \cdot G ) &= \sum_{n=1}^\infty\frac{(-1)^{n-1}}{n}(A \cdot G-\varepsilon)^n \\
      &= A \cdot \sum_{n=1}^\infty\frac{(-1)^{n-1}}{n}(G-\varepsilon)^n \\
      &= A \cdot\log(G).\qedhere
    \end{align*}
  \end{enumerate}
\end{proof}

In particular, we easily see that (see also \cite[Lemma 3.3]{DR2019})
\begin{equation}
\label{eq:AdotIIS}
  \IIS( A\cdot \CURVE ) = A \cdot \IIS( \CURVE ).
\end{equation}

The same is true for the truncated versions, and
we note that, in the special case of $\lie{2}d$, under the isomorphism in \cref{eq:lie2isom}, the action has the simple form
\begin{align}
  \label{eq:simpleForm}
  A\cdot (v,M) = (A v, A M A^\top),
\end{align}
where the operations on the right-hand side are matrix-vector resp. matrix-matrix multiplication.
It follows from \Cref{crl:equivariant} and \eqref{eq:AdotIIS} that \(\log( \IIS(A \CURVE) ) = A\cdot \log( \IIS(\CURVE) )\).
As already remarked, $\log$ is a bijection (with inverse $\exp$).
To obtain invariant expressions in terms of $\IIS(\CURVE)$ it is hence
enough to obtain invariant expressions in terms of $\log(\IIS(\CURVE))$.
Going this route has the benefit
of \emph{working on a linear object}.
To be more specific, $\IIS(\CURVE)$ is, owing to the shuffle relation,
highly redundant.
As an example in $d=2$,
\begin{align*}
  \Big\langle \IIS(\CURVE), \w{1} \Big\rangle^2 + \Big\langle \IIS(\CURVE), \w{2} \Big\rangle^2
  =
  2\ \Big\langle \IIS(\CURVE), \w{11} + \w{22} \Big\rangle.
\end{align*}
Now, both of these expressions are invariant to $\O_2(\R)$.
The left-hand-side is a nonlinear expressions in the signature, whereas the right-hand-side is a linear one.
To not have to deal with this kind of redundancy we work with the log-signature.
We note that in \cite{DR2019} the \emph{linear} invariants of the signature itself are presented.
Owing to the shuffle relation, this automatically yields (all) polynomial invariants.
But, as just mentioned, it also yields a lot of redundant information.

\begin{example}
  \label{ex:momentCurveLinearVsNonlinearInvariants}
  We continue with \Cref{ex:momentCurve}.
  The rotation
  \begin{align*}
    A =
    \begin{bmatrix}
      0 & 1 & 0 \\
      0 & 0 & 1 \\
      1 & 0 & 0
    \end{bmatrix},
  \end{align*}
  results in the curve
  \begin{align*}
    Y_t \coloneq A X_t =
    \begin{bmatrix}
      t^2 \\
      t^3 \\
      t 
    \end{bmatrix}.
  \end{align*}

  Its step-$2$ truncated signature is
  \begin{align*}
    \proj_{\le 2}\IIS(Y)
    =
		\left(
      \begin{bmatrix}1\\1\\1\end{bmatrix},
      \begin{bmatrix}
        \frac12 & \frac35 & \frac13 \\
        \frac25 & \frac12 & \frac14 \\
        \frac23 & \frac34 & \frac12
      \end{bmatrix}
    \right)
    =
		\left(
      A
      \begin{bmatrix}1\\1\\1\end{bmatrix},
      A
    \begin{bmatrix}\frac12&\frac46&\frac34\\\frac26&\frac12&\frac6{10}\\\frac14&\frac4{10}&\frac12 \end{bmatrix}
    A^\top
    \right)
    =
    A \cdot\proj_{\le 2}\IIS(\CURVE).
  \end{align*}
  The step-$2$ truncated log-signature is
  \begin{align*}
    \proj_{\le 2}\log\IIS(Y)
    =
		\left(\begin{bmatrix}1\\1\\1\end{bmatrix},
  \begin{bmatrix}
    0 & \frac{1}{10} & -\frac16 \\
    -\frac1{10} &    0  & -\frac14 \\
    \frac{1}{6} &  \frac14   &  0
   \end{bmatrix}\right)
    =
    A \cdot \proj_{\le 2}\log\IIS(\CURVE).
  \end{align*}

\end{example}

In the present work, we consider general, \emph{nonlinear} expressions of the {log-signature}.
That way, we use the economical form of the log-signature, while still providing a complete -- in a
precise sense -- set of nonlinear invariants.

\subsection{Moving frame method}

We now provide a brief introduction to the Fels-Olver moving frame method introduced in \cite{FO99}, a modern generalization of the classical moving frame method formulated by Cartan \cite{Car1951}. For a comprehensive overview of the method and survey of many of its applications see \cite{O18,fels1998moving}. We will assume in this subsection that $G$ is a finite dimensional Lie group acting smoothly\footnote{Here actly smoothly means that the map defining the group action is a $C^\infty$ map. For our purposes there is no loss
in taking 'smooth' to mean '$C^\infty$'..} on an $m$-dimensional manifold $M$.

\begin{definition}\label{def:mf}
A \textbf{moving frame} for the action of $G$ on $M$ is a smooth
map $\rho:M \rightarrow G$ such that $\rho(g\cdot z) = \rho(z) \cdot g^{-1}$.
\end{definition}

In general one can define a moving frame as a smooth $G$-equivariant map $\rho:M\rightarrow G$. For simplicity we assume $G$ acts on itself by right multiplication; this is often referred to as a \textit{right} moving frame. A moving frame can be constructed through the use of a cross-section to the orbits of the action of $G$ on $M$.

\begin{definition}\label{def:xsection}
A \textbf{cross-section} for the action of $G$ on $M$ is a submanifold $\K\subset M$ such that $\K$ intersects each orbit transversally at a unique point.
\end{definition}

\begin{definition}\label{def:free}
The action of $G$ is \textbf{free} if the \textbf{stabilizer} $G_z$\nomenclature[Gz]{$G_z$}{The stabilizer of a point $z$, the largest subgroup of $G$ that keeps $z$ invariant} of any point $z\in M$ is trivial, i.e.
$$
G_z := \{ g\in G\, |\, g\cdot z=z\} = \{\id\},
$$
where $\id\in G$ denotes the identity transformation.
\end{definition}

The following result appears in much of the previous literature on moving frames (see, for instance, \cite[Thm. 2.4]{Olv01}).

\begin{theorem}\label{thm:mfxsection}
  Let $G$ be an action on $M$ and assume that
  \begin{enumerate}
    \item[($\ast$)]
    The action is free, and around each point $z\in M$ there exists arbitrarily small
    neighborhoods whose intersection with each orbit is path-wise connected.
  \end{enumerate}
 If $\K$ is a cross-section, then the map $\rho: M\rightarrow G$ defined by sending $z$ to the unique group element $g\in G$ such that $g\cdot z\in \K$ is a moving frame.
\end{theorem}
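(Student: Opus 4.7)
The plan is to verify three things about the candidate map $\rho$: that it is well-defined, that it satisfies the right-equivariance in \Cref{def:mf}, and that it is smooth.

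Well-definedness and equivariance I would dispatch directly from the hypotheses. Given $z\in M$, the orbit $G\cdot z$ meets $\K$ in a unique point $k$ by \Cref{def:xsection}, and freeness supplies a unique $g\in G$ with $g\cdot z=k$; this $g$ is $\rho(z)$. For equivariance, fix $h\in G$, set $g=\rho(z)$, and compute $(gh^{-1})\cdot(h\cdot z)=g\cdot z=k\in\K$; the uniqueness clause applied to $h\cdot z$ then forces $\rho(h\cdot z)=gh^{-1}=\rho(z)\cdot h^{-1}$, as required.

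For smoothness, the strategy is to introduce the smooth action map
\[
    \Phi\colon G\times \K\longrightarrow M, \qquad \Phi(g,k)=g^{-1}\cdot k,
\]
and realize $\rho$ locally as (the group inverse of) the $G$-component of a smooth local inverse of $\Phi$. I would check that $d\Phi$ is an isomorphism at every $(g,k)$: freeness makes the infinitesimal action $T_eG\to T_{z}(G\cdot z)$ injective, so $d\Phi$ is injective on the $G$-factor, while transversality of $\K$ at $k$ ensures that the image of $d\Phi$ contains both the orbit-tangent space at $z$ and $T_k\K$, hence all of $T_zM$. A dimension count ($\dim G+\dim \K=\dim M$, since freeness gives $\dim(G\cdot z)=\dim G$ and transversality pins down $\dim\K$) then makes the sum direct, so the inverse function theorem supplies a smooth local inverse $\Phi^{-1}$ on a neighborhood of any $z_0\in M$.

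Finally, I would match this local inverse with the globally defined $\rho$. Using condition $(\ast)$, I shrink the neighborhood $V$ of $z_0$ so that $V\cap(G\cdot z)$ is path-connected for every $z\in V$. The local inverse produces some $(\tilde g(z)^{-1},\tilde k(z))\in G\times \K$ with $\tilde g(z)\cdot z=\tilde k(z)\in\K$, and the uniqueness clause of \Cref{def:xsection} forces $\tilde k(z)=\rho(z)\cdot z$; freeness then gives $\tilde g(z)=\rho(z)$, so $\rho$ inherits smoothness from $\Phi^{-1}$. The main obstacle is precisely this last matching step: without the path-connectedness clause in $(\ast)$, the locally constructed group element could in principle jump across disconnected components of the orbit near $z_0$ and disagree with the global $\rho$, and that assumption is exactly what rules this pathology out.
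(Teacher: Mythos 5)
Your proof is correct, and it supplies more than the paper does: the paper only sketches the equivariance step in the remark following \Cref{thm:mfxsection} (observing that $\rho(z)\cdot z$ and $\rho(g\cdot z)\cdot(g\cdot z)$ both lie in $\K$ on the same orbit, hence coincide, and then cancelling by freeness --- exactly your argument) and defers well-definedness and smoothness to the literature. Your smoothness argument via the inverse function theorem applied to $\Phi\colon G\times\K\to M$ is the standard one and is sound: freeness makes the orbit map an injective immersion of $G$, transversality plus the uniqueness of the intersection point forces $\dim G+\dim\K=\dim M$ with the tangent sum direct, so $d\Phi$ is an isomorphism everywhere. Two small remarks. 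First, with your convention $\Phi(g,k)=g^{-1}\cdot k$ one has $\Phi(\rho(z),\rho(z)\cdot z)=z$, so the $G$-component of the local inverse is $\rho$ itself, not its group inverse; your later bookkeeping $(\tilde g(z)^{-1},\tilde k(z))$ with $\tilde g(z)\cdot z=\tilde k(z)$ is internally inconsistent, though harmlessly so since inversion in $G$ is smooth. Second, the final ``matching'' step needs no appeal to the path-connectedness clause of $(\ast)$: because \Cref{def:xsection} demands that $\K$ meet each orbit in a \emph{globally} unique point, any pair $(a,k)$ with $a\cdot z=k\in\K$ already satisfies $a=\rho(z)$ by freeness, so $\Phi$ is globally injective and every local inverse automatically agrees with $\rho$. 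The connectivity clause matters in formulations where the cross-section is only local (as in Olver's setting, and as discussed in the paper's \Cref{rem:AlgRegularity}); under the global definition used here, freeness is the only part of $(\ast)$ your argument actually consumes.
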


\begin{remark}
The equivariance of the map $\rho:M\rightarrow G$ such that $\rho(z)\cdot z\in \K$ can be seen from the fact that $\rho(z) \cdot z = \rho(g\cdot z) \cdot (g\cdot z)$ for any $g\in G$. Since $G$ is free this implies that $ \rho(z)= \rho(g\cdot z) \cdot g$, and hence $\rho$ satisfies Definition \ref{def:mf}.
\end{remark}

Similarly, in this setting, a moving frame $\rho$ specifies a cross-section defined by $\K = \{\rho(z)\cdot z \in M \}$. This construction can be interpreted as a way to assign a ``canonical form'' to points $z\in M$ under the action of $G$, thus producing invariant functions on $M$ under $G$.

\begin{definition}
Let $\rho: M\rightarrow G$ be a moving frame. The \textbf{invariantization} of a function $F:M\rightarrow \R$ with respect to $\rho$ is the invariant function $\iota(F)$ defined by
$$
\iota(F)(z) = F(\rho(z)\cdot z).
$$
\end{definition}

Given a moving frame $\rho$ and local coordinates $z=(z_1,\hdots,z_m)$ on $M$, the invariantization of the coordinate functions $\iota(z_1),\hdots , \iota(z_m)$ are the \textbf{fundamental invariants} associated with $\rho$. In particular we can compute $\iota(F)$ by
$$
\iota(F)(z_1,\hdots,z_m)=F(\iota(z_1),\hdots,\iota(z_m)).
$$
Since $\iota(I)(z)=I(z)$ for any invariant function $I$, the fundamental invariants provide a functionally generating set of invariants for the action of $G$ on $M$. In general, we will call a set of invariants $\mathfrak{I}=\{J_1,\ldots,J_m\}$ \textbf{fundamental} if it functionally generates all invariants, i.e. for any invariant $I$ there is a function $I'$ such that \begin{equation*}
 I(p)=I'(J_1(p),\ldots,J_m(p)).
\end{equation*}

Now, suppose further  that $G$ is an $r$-dimensional Lie group and that $\rho$ is the moving frame associated to a \textbf{coordinate cross-section} $\K$ defined by equations
$$
z_1=c_1, \hdots, z_r=c_r
$$
for some constants $c_1,\hdots, c_r$. Then the first $r$ fundamental invariants are the \textbf{phantom invariants} $c_1,\hdots, c_r$, while the remaining $m-r$ invariants $\{I_1,\hdots, I_{m-r}\}$ form a functionally independent generating set. In this case we can see that two points $z_1,z_2\in M$ lie in the same orbit if and only if 
$$
I_1(z_1)=I_1(z_2), \hdots, I_{s}(z_1)=I_{s}(z_2).
$$

\begin{example}\label{ex:SO2mf}
Consider the canonical action of $\SO_2(\R)$ on $\R^2\setminus\{(0,0)\}$. This action
satisfies the assumptions of \Cref{thm:mfxsection}
and a cross-section to the orbits is given by
$$
\K = \{(x,y)\, |\, x=0, y>0\}.
$$

The unique group element taking a point to the intersection of its orbit with $\K$ is the rotation (see \Cref{fig:so2})

$$
\rho(x,y) = \begin{bmatrix}
\frac{y}{\sqrt{x^2+y^2}} & \frac{-x}{\sqrt{x^2+y^2}}\\
 \frac{x}{\sqrt{x^2+y^2}} & \frac{y}{\sqrt{x^2+y^2}} 
\end{bmatrix}.
$$

\begin{figure}[!ht]
    \centering
    \includegraphics[width=0.6\textwidth]{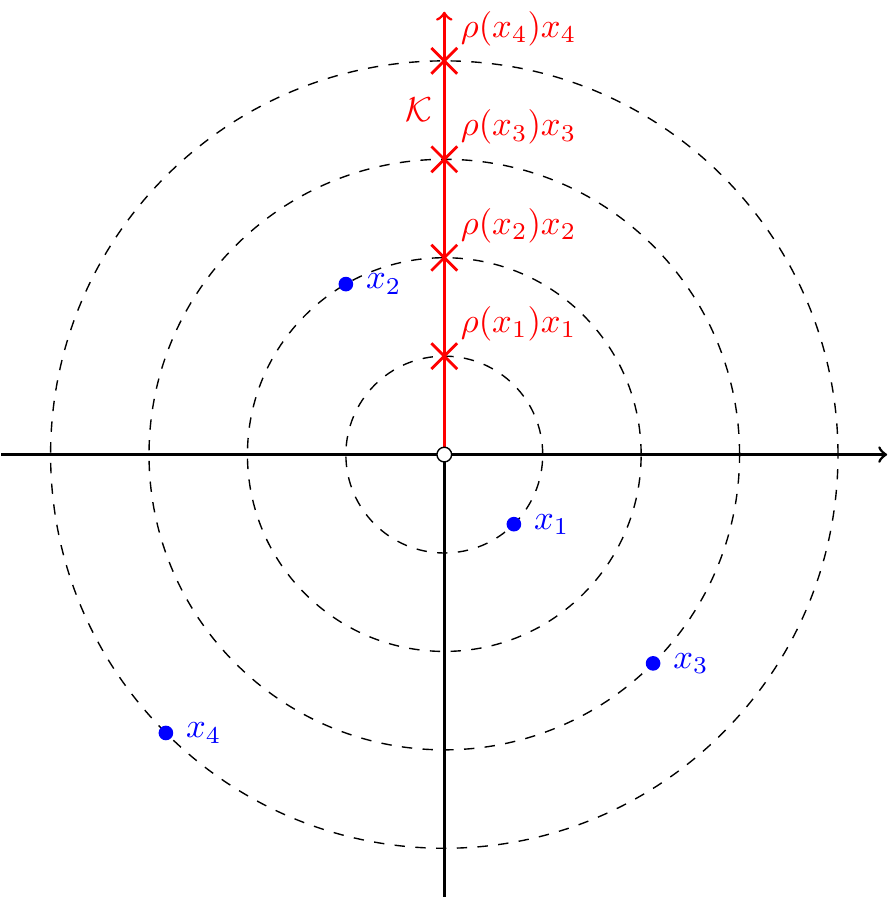}
    \caption{Cross section for the canonical action of the special orthogonal group \(\SO_2(\R)\).}
    \label{fig:so2}
\end{figure}

The fundamental invariants associated with the moving frame $\rho:\R^2\backslash\{(0,0)\}\rightarrow \SO_2(\R)$ are given by

$$
\iota(x) = 0\quad \iota(y)=\sqrt{x^2+y^2}.
$$

Thus any invariant function
for this action can be written as a function of $\iota(y)$, the Euclidean norm.
One can check that indeed for an invariant $I(x,y)$,
one has $I(x,y)=I(0,\sqrt{x^2+y^2})$.
This additionally implies that two points are related by a rotation if and only if they have the same Euclidean norm.
\end{example}

In practice it is difficult, or impossible, to find a global cross-section, and thus a global moving frame, to the orbits of $G$ on $M$. For instance in the above example, the origin was removed from $\R^2$ to ensure freeness of the action.
If the action of $G$ on $M$ satisfies condition ($\ast$) from \Cref{thm:mfxsection},
then the existence of a \textbf{local moving frame} around each point $z\in M$
is guaranteed by \cite[Thm. 4.4]{FO99}.
In this case the moving frame is a map $\rho: U\rightarrow V$ from a neighborhood $z\in U$ of $M$ to a neighborhood of the identity in $V\subset G$. The fundamental set of invariants produced are also local in nature and thus only guaranteed to be invariant on $U$ for elements $g\in V$.

The condition ($\ast$) in  \Cref{thm:mfxsection} can be relaxed in certain cases. In \cite[Sec. 1]{HK07b} the authors outline a method to construct a fundamental set of local invariants for actions of $G$ that are only semi-regular, meaning that all orbits have the same dimension. In particular Theorem 1.6 in \cite{HK07b} states that for a semi-regular action of $G$ on $M$, there exists a \textit{local} coordinate cross-section about every point $z\in M$. In a neighborhood $U$ containing $z$, such a linear space intersects transversally the connected component containing $\overline{z}$ of the orbit $G\cdot \overline{z}$ at a unique point for each $\overline{z}\in U$ and is of complementary dimension to the orbits of the action.

\begin{remark}\label{rem:AlgRegularity}
 The algebraic actions that we define in the next section are automatically semi-regular on a Zariski-open subset of the target space (Proposition \ref{prop:alggroupfacts}(c)), and hence a local cross-section exists around any point in this subset. Since orbits are algebraic subsets, a local coordinate cross-section is a submanifold of complementary dimension (to the dimension of orbits) intersecting each orbit about $z$ transversally, and hence in finitely-many points. If every sufficiently small neighborhood about $z$ does \textit{not} have path-wise connected intersection with each orbit, a local cross-section about $z$ necessarily intersects some orbit at infinitely-many points, and hence a free algebraic group action necessarily satisfies condition ($\ast$) from \Cref{thm:mfxsection}.
\end{remark}

\subsection{Algebraic groups and Invariants}\label{ssec:alggroups}

In this work, we will be in the setting of an algebraic group $G$ acting rationally on a variety
$X$. In other words $G$ is an algebraic variety equipped with a group structure, and the action of
$G$ on $X$ is given by a rational map $\Phi: G\times X \to X$. Here we outline some key facts and results about algebraic group actions and the invariants of such actions, following \cite{PV1994} for much of our exposition. Unless specified otherwise, both $G$ and $X$ are both varieties over the algebraically closed field $\C$.

The orbit $G \cdot p$ of a point $p\in X$ under $G$ is the image of $G\times \{p\}$ under the rational map $\Phi$ defining the action, and hence is open in its closure $\overline{G\cdot p}$ under the \textbf{Zariski topology}.%
\footnote{
The Zariski topology on an affine space $k^d$ is the topology where closed sets are given by subsets of the form $V(f_1,\hdots,f_s) = \{ (x_1,\hdots,x_d)\in k^d\,| \, f_1(x_1,\hdots,x_d)=\hdots=f_s(x_1,\hdots,x_d)=0\}$ for some collection of polynomials $f_1,\hdots,f_s\in k[x_1,\hdots,x_d]$. 
}

The following proposition summarizes a few basic results on orbits of algebraic groups that can be found in \cite[Section 1.3]{PV1994}.

\begin{proposition}\label{prop:alggroupfacts}
For any point $p\in X$, the stabilizer $G_p$ is an algebraic subgroup of $G$ and $G\cdot p$ satisfies the following:

\begin{itemize}
\item[(a)] The orbit $G\cdot p$ is a smooth, Zariski-open subset of $\overline{G\cdot p}$.

\item[(b)] The dimension of $G\cdot p$ satisfies $\dim G\cdot p = \dim G - \dim G_p,$ where $\dim G_p=\dim T_p (G\cdot p)$.

\item[(c)] The dimension of $G\cdot p$ is maximal on a non-empty Zariski-open subset of $X$.

\end{itemize}
\end{proposition}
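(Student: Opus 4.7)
The plan is to treat each item in turn, leaning on standard facts about morphisms of varieties (Chevalley's theorem on constructible images, the theorem on fiber dimension, upper semicontinuity of fiber dimension) together with the homogeneity of the action on an orbit.

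For part (a), I would begin with the orbit map $\mu_p \colon G \to X$, $g \mapsto g\cdot p$. By Chevalley's theorem its image $G\cdot p$ is a constructible subset of $X$, and in particular contains a subset that is open in the Zariski closure $\overline{G\cdot p}$. Now I would exploit homogeneity: for every $g\in G$, left multiplication by $g$ gives an automorphism of $\overline{G\cdot p}$ (since $G\cdot p$ is $G$-stable and Zariski closure commutes with homeomorphisms) that sends $p$ to $g\cdot p$. So the existence of \emph{one} point of $G\cdot p$ with an open neighborhood in $\overline{G\cdot p}$ contained in $G\cdot p$ immediately yields the same at every point, which means $G\cdot p$ is open in $\overline{G\cdot p}$. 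The same homogeneity argument applies to smoothness: because the smooth locus of $\overline{G\cdot p}$ is non-empty and intersects $G\cdot p$, and because the $G$-action on $G\cdot p$ is transitive by automorphisms of $\overline{G\cdot p}$, every point of $G\cdot p$ is a smooth point.

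For part (b), I would apply the theorem on dimension of fibers to the dominant morphism $\mu_p \colon G \to \overline{G\cdot p}$. For a generic point in the image the fiber has dimension $\dim G - \dim\overline{G\cdot p}$; but the fiber over $p$ is exactly $G_p$, and by $G$-equivariance (translating by any $g\in G$) every fiber $\mu_p^{-1}(g\cdot p)$ is isomorphic to $G_p$, so all fibers have the same dimension. Combined with $\dim\overline{G\cdot p} = \dim G\cdot p$ from part (a), this yields $\dim G\cdot p = \dim G - \dim G_p$. The identification with $\dim T_p(G\cdot p)$ follows because $p$ is a smooth point of $G\cdot p$ by part (a), so the tangent space there has dimension equal to that of the variety; equivalently one can compute the differential $d\mu_p \colon T_e G \to T_p X$, whose kernel is $T_e G_p$ and whose image is $T_p(G\cdot p)$.

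For part (c), I would prove upper semicontinuity of the function $p \mapsto \dim G_p$. The standard way is to consider the incidence variety
\[
    \Sigma \coloneq \{(g,p)\in G\times X \mid g\cdot p = p\},
\]
which is closed in $G\times X$ (being the preimage of the diagonal in $X\times X$ under $(g,p)\mapsto(g\cdot p,p)$), and to apply upper semicontinuity of fiber dimension to the projection $\Sigma \to X$, whose fiber over $p$ is exactly $G_p$. Thus $\{p : \dim G_p \ge k\}$ is Zariski closed for each $k$, so via the formula from part (b), the set where $\dim G\cdot p$ attains its maximum is Zariski open; it is non-empty because the maximum is attained somewhere.

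The main obstacle, and the step I would want to get exactly right, is the homogeneity argument in part (a): one has to verify that $G$ really acts by variety automorphisms on $\overline{G\cdot p}$, which uses that the action $G\times X\to X$ is a morphism and that taking Zariski closures is preserved by homeomorphisms. Once that is in place, parts (b) and (c) are standard consequences of the dimension and semicontinuity theorems for morphisms of varieties.
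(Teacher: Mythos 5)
The paper does not prove this proposition at all: it is stated as a summary of standard facts and justified only by a citation to Popov--Vinberg \cite[Section 1.3]{PV1994}. Your argument is, in substance, the standard proof one finds there (and in Borel or Humphreys): Chevalley's constructibility theorem plus homogeneity for (a), the fiber-dimension theorem for the orbit map for (b), and semicontinuity via the incidence variety for (c). All three steps are correct, so you have supplied a genuine proof where the paper merely defers to the literature. Two small points to tighten. First, in (c), upper semicontinuity of fiber dimension is a statement about points of the \emph{source} of a morphism, so what you get directly is that $\{(g,p)\in\Sigma : \dim_{(g,p)}\Sigma_p \ge k\}$ is closed in $\Sigma$ (here $\Sigma_p$ denotes the fiber over $p$); to transfer this to $X$, restrict to the identity section $p\mapsto(e,p)$, which is a closed embedding of $X$ into $\Sigma$ meeting every fiber --- since each fiber $G_p$ is a group and hence equidimensional, $\dim_{(e,p)}\Sigma_p=\dim G_p$, and the desired set $\{p:\dim G_p\ge k\}$ is the preimage of a closed set under this section. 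Second, the identity $\dim G_p=\dim T_p(G\cdot p)$ as literally printed in the statement is a typo (it would force $\dim G\cdot p=\tfrac12\dim G$); your reading, namely $\dim G\cdot p=\dim T_p(G\cdot p)$ via smoothness from (a), together with the separable-in-characteristic-zero computation $\ker d\mu_p=T_eG_p$, is the intended content and is what you correctly prove. Note also that the paper describes $\Phi$ as a rational map; your proof, like the cited source, tacitly requires the action to be a morphism (at least on the relevant locus), which is the case in all applications in the paper.
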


For an arbitrary field $k$, we denote the ring of polynomial functions on the variety $X$ as $k[X]$, i.e. if $\mathcal{I}(X)$ is the ideal generated by the polynomials defining the variety $X\subset \mathbb{C}^d$, then $k[X] = k[x_1,x_2,\dotsc,x_d]/\mathcal{I}(X)$.\nomenclature[kX]{$k[X]$}{The ring of polynomial functions on the variety $X$ with coefficients in $k$} If $X$ is irreducible, then the field $k(X)$\nomenclature[k(X)]{$k(X)$}{The field of rational functions on the variety $X$ with coefficients in $k$} of rational functions on $X$ is defined similarly. The polynomial invariants (for the action of $G$ on the variety $X$) form a subring of $k[X]$ defined by
$$
k[X]^G = \{ f\in k[X]\, |\, f(g\cdot p) = f(p), \quad \text{for all}\quad g\in G, p\in X\}
$$
\nomenclature[kXG]{$k[X]^G$}{The subring of $k[X]$ of polynomial invariants for the action of $G$ on $X$}
and
the rational invariants form a subfield of $k(X)$ given by
$$
k(X)^G = \{ f\in k(X)\, |\, f(g\cdot p) = f(p), \quad \text{for all}\quad g\in G, p\in X\}
$$
\nomenclature[k(X)G]{$k(X)^G$}{The subfield of $k(X)$ of rational invariants for the action of $G$ on $X$}
respectively. Constructing invariant functions and finding generating%
\footnote{By a \textbf{generating set} for $k[X]^G$, we refer to a subset of $k[X]^G$ that generates $k[X]^G$ as a polynomial ring. Similarly a generating set of $k(X)^G$ is a subset that generates $k(X)^G$ as a field.} sets for $\C[X]^G$ is the subject of classical invariant theory \cite{Lit65, Olv99, Stu2008}. In \cite{Hilbert1890} Hilbert proved his finiteness theorem, showing that for linearly reductive groups acting on a vector space $V$ the polynomial ring $\C[V]^G$ is finitely generated leading him to conjecture in his fourteenth problem that $\C[X]^G$ is always finitely generated. In \cite{Nagata1959} Nagata constructed a counter-example to this conjecture.
For $\C(X)^G$, however, a finite generating set always exists and can be explicitly constructed (see for instance \cite{DK2015,HK07a}). Furthermore a set of rational invariants is generating if and only if it is also \textit{separating}.

\begin{definition}\label{def:GenSep}
A set of rational invariants $\INV\subset \C(X)^G$ \textbf{separates orbits on a subset $U\subset X$} if two points $p,q\in U$ lie in the same orbit if and only if $K(p)=K(q)$ for all $K\in \INV$. If there exists a non-empty, Zariski-open subset $X$ where $\INV$ separates orbits then we say $\INV$ is \textbf{separating}.
\end{definition}

\begin{proposition}\label{prop:GenSep}
For the action of $G$ on $X$, the field $\C(X)^G$ is finitely generated over $\C$. Moreover a subset $\INV\subset \C(X)^G$ is generating if and only if it is separating.
\end{proposition}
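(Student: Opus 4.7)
The plan is to derive both statements from Rosenlicht's theorem on rational quotients of algebraic group actions. After reducing to irreducible $X$ so that $\C(X)$ is a field, Rosenlicht's theorem (see e.g.\ \cite[Ch.~2]{PV1994}) supplies a nonempty $G$-invariant Zariski-open $U\subseteq X$ and a dominant morphism $\pi\colon U\to Y$ onto a variety $Y$ whose fibers are exactly the $G$-orbits in $U$, and which induces an isomorphism $\pi^*\colon\C(Y)\xrightarrow{\sim}\C(X)^G$. Since $\C(Y)$ is finitely generated over $\C$ ($Y$ being a variety), so is $\C(X)^G$, giving the first assertion.

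For the direction $(\Rightarrow)$ of the equivalence: if $\INV$ generates $\C(X)^G$, then its image under $(\pi^*)^{-1}$ generates $\C(Y)$, and a finite subset of this image is already regular on and separates the points of some nonempty open $V\subseteq Y$. Pulling back through $\pi$, the corresponding subset of $\INV$ separates the $G$-orbits contained in $\pi^{-1}(V)\subseteq U$, and $\INV$ is therefore separating.

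For $(\Leftarrow)$, let $L\subseteq\C(X)^G$ denote the subfield generated by $\INV$. A Noetherian argument applied to the descending family of closed subvarieties of $X\times X$ cut out by the equations $f(p)=f(q)$ for $f$ ranging over finite subsets $\INV_0\subseteq\INV$ produces a finite separating subset $\INV_0\subseteq\INV$. This $\INV_0$ generates a finitely generated subfield $L_0\subseteq L$, which is therefore the function field $\C(Y_\INV)$ of a variety $Y_\INV$ and determines a dominant rational map $\psi_\INV\colon X\dashrightarrow Y_\INV$. Because $L_0\subseteq\C(X)^G=\pi^*\C(Y)$, the map $\psi_\INV$ factors as $f\circ\pi$ for a dominant rational $f\colon Y\dashrightarrow Y_\INV$; the separating property of $\INV_0$ then forces the generic fibers of $\psi_\INV$ to equal the $G$-orbits, i.e.\ the generic fibers of $\pi$, so $f$ has generically singleton fibers and is birational. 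Hence $L_0=\C(X)^G$ and a fortiori $L=\C(X)^G$, so $\INV$ is generating. The main obstacle is the extraction of the finite separating subset $\INV_0$, since without finite generation one cannot realize $L$ as the function field of a variety; once this Noetherian reduction is in place, the remainder of the argument is a formal chase in the diagram provided by Rosenlicht's theorem.
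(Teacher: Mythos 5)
Your proposal is correct in substance, but it takes a genuinely more self-contained route than the paper, whose proof is essentially a citation: the backward implication (separating $\Rightarrow$ generating) is \cite[Lem.~2.1]{PV1994}, the existence of a finite separating --- hence finite generating --- set is \cite[Thm.~2.4]{PV1994} (Rosenlicht's theorem), and the forward implication follows because that finite separating set can be rewritten as rational functions of any generating set. You instead unpack both cited results from Rosenlicht's quotient $\pi\colon U\to Y$ directly: your forward direction reads separation off the birationality of the map to affine space determined by a finite generating subset (rather than routing through an auxiliary finite separating set), and your backward direction is in effect a proof of \cite[Lem.~2.1]{PV1994} via the factorization of $\psi_{\INV}$ through $\pi$. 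What the paper's version buys is brevity; what yours buys is transparency about where Rosenlicht's theorem is actually used. Two remarks on the step you yourself flag as the main obstacle. First, the Noetherian extraction of a finite separating subset $\INV_0\subseteq\INV$ is genuinely delicate for \emph{rational} invariants: the agreement loci $\{f(p)=f(q)\}$ are only constructible, and stabilization of their closures shows that the locus cut out by $\INV_0$ and by any larger finite subset have the same closure --- which does not immediately force a particular pair agreeing on $\INV_0$ to agree on all of $\INV$; making this airtight needs the usual ``in general position'' bookkeeping. Second, the step is avoidable: the subfield $L\subseteq\C(X)^G\subseteq\C(X)$ generated by $\INV$ is automatically finitely generated over $\C$, since any intermediate field of a finitely generated field extension is itself finitely generated, so you can run your factorization argument with $L$ in place of $L_0$ (and $L$ inherits the separating property from $\INV$); this same classical fact also gives the finite generation of $\C(X)^G$ without invoking Rosenlicht at all.
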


\begin{proof}
The backward direction holds by \cite[Lem. 2.1]{PV1994}. By \cite[Thm. 2.4]{PV1994} there always exists a finite set of separating invariants in $\C(X)^G$, and hence a finite generating set. Additionally this finite set can be rewritten in terms of any generating set, and hence any generating set is also separating.
\end{proof}

Under certain conditions the polynomial ring $\C[X]^G$ is also separating, as the following proposition from \cite[Prop. 3.4]{PV1994} shows.

\begin{proposition}
Suppose the variety $X$ is irreducible. There exists a finite, separating set of invariants $\INV\subset \C[X]^G$ if and only if $\C(X)^G=Q\C[X]^G$ where $Q\C[X]^G=\left.\left\{\frac{f}{g}\, \right|\, f,g\in \C[X]^G\right\}$.
\end{proposition}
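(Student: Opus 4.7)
The proposition is a clean biconditional, so I would tackle the two implications separately, in both cases leveraging Proposition \ref{prop:GenSep} which identifies generating sets and separating sets inside $\C(X)^G$.

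For the forward direction, assume $\INV\subset\C[X]^G$ is a finite separating set. Since $\INV$ is separating, Proposition \ref{prop:GenSep} immediately upgrades this to the statement that $\INV$ is a generating set of the field $\C(X)^G$ over $\C$. Hence every rational invariant $h\in\C(X)^G$ can be written as a rational expression $h=P(f_1,\dotsc,f_k)/Q(f_1,\dotsc,f_k)$ with $f_i\in\INV\subset\C[X]^G$. Since $\C[X]^G$ is a subring of $\C[X]$, both $P(f_1,\dotsc,f_k)$ and $Q(f_1,\dotsc,f_k)$ again lie in $\C[X]^G$, so $h\in Q\C[X]^G$. The reverse inclusion $Q\C[X]^G\subset\C(X)^G$ is automatic since the quotient field of a ring of invariants consists of invariants.

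For the backward direction, assume $\C(X)^G=Q\C[X]^G$. By Proposition \ref{prop:GenSep} the field $\C(X)^G$ is finitely generated over $\C$, so choose generators $h_1,\dotsc,h_k\in\C(X)^G$. The hypothesis lets us write $h_i=f_i/g_i$ with $f_i,g_i\in\C[X]^G$ for each $i$. The finite set $\INV\coloneq\{f_1,g_1,\dotsc,f_k,g_k\}\subset\C[X]^G$ clearly still generates $\C(X)^G$ as a field (it contains enough data to recover each $h_i$), and another application of Proposition \ref{prop:GenSep} then shows that $\INV$ is separating.

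The only delicate point I expect is keeping the correct notion of ``separating'' in play throughout; that is, $\INV\subset\C[X]^G$ being separating means separating on a Zariski-open subset of $X$, and we must check this coincides with $\INV$ generating $\C(X)^G$ as a field over $\C$. But this equivalence is exactly the content of Proposition \ref{prop:GenSep}, which has been established already, so no additional technicalities arise. The rest of the argument is essentially a restatement of the definition of $Q\C[X]^G$ together with the field-generation property.
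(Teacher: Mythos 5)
Your argument is correct, and it is worth noting that the paper itself offers no proof of this statement at all: it is quoted verbatim from Popov--Vinberg \cite[Prop.~3.4]{PV1994}. What you have done is supply a self-contained derivation from Proposition~\ref{prop:GenSep}, which the paper does establish beforehand, and that derivation holds up. The forward direction (separating $\Rightarrow$ generating $\Rightarrow$ every rational invariant is a quotient of polynomial expressions in elements of $\C[X]^G$, which is closed under such expressions) and the backward direction (finite generation of $\C(X)^G$, clearing denominators to get a finite subset of $\C[X]^G$ that still generates, hence separates) are both sound, and you correctly flag that ``separating'' throughout means separating orbits on a non-empty Zariski-open subset, which is exactly the notion in Definition~\ref{def:GenSep} and in Proposition~\ref{prop:GenSep}. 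Two minor points you leave implicit but which are harmless: irreducibility of $X$ is what makes $\C(X)$ a field so that quotients $f/g$ and the subfield $Q\C[X]^G$ are well defined, and in the backward direction one should observe that each denominator $g_i$ is nonzero in $\C(X)$ precisely because $h_i$ is a well-defined rational function. Since the paper delegates the proof to the literature, your write-up is, if anything, more complete than the source at this point.
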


One way to understand the structure of invariant rings is by considering subsets of $X$ that intersect a general orbit.

\begin{definition}\label{def:RelSec}
  Let $N \subset G$ be a subgroup.
A subvariety $S$ of $X$ is a \textbf{relative $N$-section} for the action of $G$ on $X$ if the following hold:
\begin{itemize}
\item 
  There exists a non-empty, $G$-invariant, and Zariski-open subset $U\subset X$, such that $S$ intersects each orbit that is contained in $U$.
  In other words, we have that $\overline{\Phi(G\times S)} = X$, where closure is taken in the Zariski topology.

\item One has  $N = \{ n\in G\, |\, nS=S\}$.
\end{itemize}
\end{definition}

We call the subgroup $N$ the \textbf{normalizer} subgroup of $S$ with respect to $G$. The following proposition summarizes a discussion in \cite[Sec. 2.8]{PV1994}.

\begin{example}
  For the action of $\SO_2(\mathbb{C})$ on the Zariski-open subset of $\C^2$ defined by $x^2+y^2\neq 0$, the variety $S$ defined by $x=0$ is a relative $N$-section for the action where $N$ is the 2-element subgroup generated by the reflection about the $y$-axis. Then $S$ intersects each orbit of the action in precisely two points.
\end{example}

\newcommand\restrict{\mathsf{R}}
\begin{proposition}\label{prop:NSec}
Let $S$ be a relative $N$-section for the action of $G$ on $X$.
Then the restriction map
\begin{align*}
  \restrict_{X\to S}\colon \C(X) \rightarrow \C(S),
\end{align*}
restricts to a field isomorphism between $\C(X)^G$ and $\C(S)^N$.
\end{proposition}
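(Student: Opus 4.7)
The plan is to verify that the restriction map $\restrict_{X\to S}$ carries $\C(X)^G$ into $\C(S)^N$, that this restricted map is injective, and that it is surjective—with surjectivity being the step where the $N$-section hypothesis does all the work.

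For well-definedness: given $f\in\C(X)^G$ defined on a dense open $U_f\subset X$, the relative section property ensures $G\cdot S$ is dense in $X$, hence $U_f\cap S$ is dense in $S$, so $f|_S$ is a well-defined element of $\C(S)$. Invariance under $N$ is then immediate: for $n\in N$ and a generic $s\in U_f\cap S$, the condition $nS=S$ guarantees $ns\in S$, and $G$-invariance of $f$ gives $f(ns)=f(s)$. Being induced by inclusion, the restriction is automatically a ring homomorphism.

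Injectivity is automatic since a non-zero homomorphism of fields is injective. Concretely, if $f|_S\equiv 0$, then by $G$-invariance $f$ vanishes on $G\cdot S$, which is dense in $X$, so $f=0$.

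The main obstacle is surjectivity. Given $h\in\C(S)^N$, the natural candidate extension is $\tilde h(gs)\coloneq h(s)$ on the dense set $G\cdot S$. Well-definedness requires that whenever $g_1 s_1=g_2 s_2$ with $s_i\in S$ generic, $h(s_1)=h(s_2)$; setting $\gamma=g_2^{-1}g_1$, one has $\gamma s_1=s_2\in S$, so one needs $h$ to be invariant under this $\gamma$. If $\gamma\in N$ this is the assumed $N$-invariance of $h$; the content is that \emph{generically this is forced}. I would establish this via a dimension count: by \Cref{prop:alggroupfacts}, the dominant action map $\mu:G\times S\to X$ has generic fiber of dimension $\dim G+\dim S-\dim X$, and the relative $N$-section property is designed so that this coincides with $\dim N$. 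The $N$-action $n\cdot(g,s)=(gn^{-1},ns)$ preserves the fibers of $\mu$ (since $\mu(gn^{-1},ns)=gs$) and acts freely on a dense open, so these $N$-orbits fill out the generic fibers. This yields birationality of the induced map $G\times^N S\to X$, giving a field isomorphism $\C(X)\cong\C(G\times S)^N$; taking $G$-invariants on both sides (where $G$ acts on $G\times S$ by left multiplication on the first factor) produces $\C(X)^G\cong\C(G\times S)^{G\times N}=\C(S)^N$, and unwinding the identification (setting $g=e$) shows it coincides with $\restrict_{X\to S}$.
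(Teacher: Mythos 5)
Your reduction of the problem to surjectivity is the right instinct, and the first three steps are essentially fine (for well-definedness one should add that the domain of definition of a $G$-invariant rational function can be taken $G$-invariant, so its closed complement cannot contain the dense set $G\cdot S$, whence it meets $S$). The surjectivity step, however, has a genuine gap --- and in fact no argument can close it from Definition \ref{def:RelSec} alone, because the proposition is \emph{false} for that literal definition: the paper is implicitly importing the stronger notion of relative section from \cite[Sec.~2.8]{PV1994}, which additionally requires that the generic orbit meet $S$ in a \emph{single} $N$-orbit, i.e.\ $(G\cdot s)\cap S=N\cdot s$ for generic $s\in S$. Your dimension count does not deliver this: the equality $\dim N=\dim G+\dim S-\dim X$ only shows that each (free) $N$-orbit is a full-dimensional piece of its fiber of $\mu\colon G\times S\to X$, not that it exhausts the fiber, which may consist of several $N$-orbits; in that case $G\times^N S\to X$ is generically finite of degree $>1$ rather than birational, and the restriction map is injective but not surjective.

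Concretely, let $G=\SO_2(\C)$ act standardly on $X=\C^2$ and take $S=\{x=1\}$. Then $\overline{G\cdot S}=X$ and $N=\{n\in G: nS=S\}=\{\id\}$, so $S$ satisfies Definition \ref{def:RelSec} with $N$ trivial, and all three dimensions in your count agree ($0=1+1-2=\dim N$). Yet the generic orbit $\{x^2+y^2=c\}$ meets $S$ in the two points $(1,\pm\sqrt{c-1})$, which form two $N$-orbits; the restriction map sends $\C(X)^G=\C(x^2+y^2)$ onto $\C(1+y^2)=\C(y^2)\subsetneq\C(y)=\C(S)^N$, and your candidate extension $\tilde h(gs)\coloneq h(s)$ of $h(y)=y$ is not well defined. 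To repair the proof you must assume (and then invoke) the condition $(G\cdot s)\cap S=N\cdot s$ for generic $s$; with it, well-definedness of $\tilde h$ on the dense set $G\cdot S$ is immediate and the rest of your outline goes through. The sections $L_d^{(i)}$ constructed later in the paper do satisfy this stronger condition (the proof of \Cref{prop:ComplexCS} exhibits the fibers as single orbits of the normalizer), so nothing downstream is affected, but the hypothesis must be stated for the proposition to be provable.
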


\begin{corollary}\label{cor:RelSecGen}
  Let $S$ be a relative $N$-section for the action of $G$ on $X$ and $\INV \subset \C(X)^G$ a set such that
  $\restrict_{X\to S}(\INV)$ generates $\C(S)^N$ where $\restrict_{X\to S}$ is the restriction map from \Cref{prop:NSec}.
  Then $\INV$ is a generating set for $\C(X)^G$.
\end{corollary}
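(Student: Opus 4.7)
The plan is to leverage \Cref{prop:NSec} and exploit the fact that a field isomorphism preserves generating sets. Since $\restrict_{X \to S}$ restricts to an isomorphism $\C(X)^G \to \C(S)^N$, and ring/field operations are preserved under isomorphisms, one can transport expressions freely between the two sides.

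First I would pick an arbitrary $f \in \C(X)^G$ and apply the restriction map to obtain $\restrict_{X\to S}(f) \in \C(S)^N$. By hypothesis, $\restrict_{X\to S}(\INV)$ generates $\C(S)^N$ as a field over $\C$, so there exist $K_1,\dotsc,K_n \in \INV$ and a rational function $R$ in $n$ variables with coefficients in $\C$ such that
\[
\restrict_{X\to S}(f) = R\bigl(\restrict_{X\to S}(K_1), \dotsc, \restrict_{X\to S}(K_n)\bigr).
\]
Since $\restrict_{X\to S}$ is a field homomorphism on $\C(X)^G$, the right-hand side equals $\restrict_{X\to S}(R(K_1,\dotsc,K_n))$, so that
\[
\restrict_{X\to S}(f) = \restrict_{X\to S}\bigl(R(K_1,\dotsc,K_n)\bigr).
\]
Then I would invoke injectivity of the isomorphism from \Cref{prop:NSec} to conclude $f = R(K_1,\dotsc,K_n)$, which exhibits $f$ as a rational function of elements of $\INV$. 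Since $f$ was arbitrary, this shows $\INV$ generates $\C(X)^G$.

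There is no genuine obstacle here: the corollary is essentially a formal consequence of the fact that the restriction map is an isomorphism of fields. The only thing to be slightly careful about is that $R(K_1,\dotsc,K_n)$ is a priori an element of $\C(X)$ rather than $\C(X)^G$, but since each $K_i$ lies in $\C(X)^G$ and the latter is a field, the rational combination lies in $\C(X)^G$ as well, so the application of $\restrict_{X\to S}$ and the appeal to injectivity are both legitimate.
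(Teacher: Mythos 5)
Your argument is correct and is exactly the (implicit) reasoning the paper relies on: the corollary is stated without proof as an immediate consequence of \Cref{prop:NSec}, namely that a field isomorphism transports generating sets, and your careful handling of where $R(K_1,\dotsc,K_n)$ lives before applying injectivity fills in the only detail worth checking.
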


Relative sections can be used to construct generating sets of rational invariants for algebraic actions as in \cite{GHP18}, which the authors refer to as the \textit{slice method}. Similar in spirit to the approach in \cite{HK07b}, considerations can be restricted to an algebraic subset of $X$. When the intersection of $S$ with each orbit is zero-dimensional, a relative $N$-section can be thought of as the algebraic analog to a \textit{local} cross-section for an action.

We end the section by considering algebraic actions on varieties defined over $\R$, where the issue
is more delicate. For instance, in this setting Proposition \ref{prop:GenSep} no longer holds meaning that generating sets of invariants are not necessarily separating and vice versa (see
\cite[Rem. 2.7]{KRV20}). 
Suppose that $X(\R)$\nomenclature[X(R)]{$X(\R)$}{A real variety with associated complex variety $X$} and $G(\R)$\nomenclature[G(R)]{$G(\R)$}{A real variety with associated complex variety $G$, of which it is also a subgroup} are real varieties with action given by
$\Phi: G(\R) \times X(\R) \to X(\R)$ and that $X$ and $G$ are the associated complex varieties. Then $\Phi$ defines an action of $G$ on $X$.

\begin{proposition}\label{prop:RealComplexInvFields}
$\R(X(\R))^{G(\R)}$ is a subfield of $\C(X)^G$.
\end{proposition}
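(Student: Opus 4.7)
The plan is to construct a natural injective field homomorphism $\R(X(\R)) \hookrightarrow \C(X)$ and then to verify that it sends $G(\R)$-invariants to $G$-invariants. For the first step, I would use the description of the coordinate rings: $\R[X(\R)] = \R[x_1,\dotsc,x_d]/\mathcal{I}_\R(X(\R))$ and $\C[X] = \C[x_1,\dotsc,x_d]/\mathcal{I}(X)$, where the hypothesis that $X$ is the complex variety associated to $X(\R)$ means precisely that $\mathcal{I}(X)$ is generated by real polynomials and $\mathcal{I}_\R(X(\R)) = \mathcal{I}(X)\cap\R[x_1,\dotsc,x_d]$, so that base change by $\C\otimes_\R(-)$ produces an inclusion $\R[X(\R)]\hookrightarrow\C[X]$. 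Passing to fractions gives $\R(X(\R))\hookrightarrow\C(X)$, and this map is clearly compatible with addition, multiplication and inversion, so it is a field embedding.

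Next I would take $f\in\R(X(\R))^{G(\R)}$ and show that, regarded as an element of $\C(X)$, it is fixed by all of $G$. Consider the rational function
\[
  h\colon G\times X \dashrightarrow \C,\qquad h(g,p)\coloneq f(g\cdot p)-f(p),
\]
which is well defined on a dense open subset of $G\times X$ because the action $\Phi\colon G\times X\to X$ is rational and $f\in\C(X)$. Since $f$ has real coefficients and $\Phi$ is defined over $\R$, the function $h$ is a rational function with real coefficients, so its (complex) zero locus is the complexification of its real zero locus. By hypothesis $h$ vanishes on $G(\R)\times X(\R)$ wherever it is defined.

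The key step is Zariski density: $X(\R)$ is Zariski-dense in $X$ and $G(\R)$ is Zariski-dense in $G$ (this is exactly what it means for $X$ and $G$ to be the complex varieties associated to the real varieties $X(\R)$ and $G(\R)$, since the vanishing ideal of $X(\R)$ inside $\R[x_1,\dotsc,x_d]$ generates $\mathcal{I}(X)$ over $\C$). Hence $G(\R)\times X(\R)$ is Zariski-dense in $G\times X$, and a rational function that vanishes on a dense subset of its domain of definition must be identically zero. Therefore $h\equiv 0$, i.e.\ $f(g\cdot p)=f(p)$ as elements of $\C(X)$ for all $g\in G$, showing $f\in\C(X)^G$.

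Finally I would observe that the embedding $\R(X(\R))\hookrightarrow\C(X)$ restricts to a field homomorphism $\R(X(\R))^{G(\R)}\to\C(X)^G$ by the previous paragraph, and that this restriction is still injective (as a restriction of an injection). The only subtlety I expect is book-keeping around domains of definition of rational functions and ensuring that the implicit identifications $\mathcal{I}_\R(X(\R)) = \mathcal{I}(X)\cap\R[x_1,\dotsc,x_d]$ and the Zariski density of real points hold under the paper's conventions for ``associated complex variety''; both are standard once those conventions are spelled out, and no deeper machinery is needed.
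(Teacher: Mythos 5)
Your proposal is correct and follows essentially the same route as the paper: the paper's proof also considers the rational function $f(g\cdot p)-f(p)$, notes it vanishes on $G(\R)\times X(\R)$, and concludes it vanishes identically on $G\times X$. You have merely made explicit the field embedding $\R(X(\R))\hookrightarrow\C(X)$ and the Zariski-density of the real points, which the paper leaves implicit.
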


\begin{proof}
If $f\in \R(X(\R))^{G(\R)}$, then the rational function $f(g\cdot p) - f(p)$ is identically zero on $G(\R)\times X(\R)$, and hence is identically zero on $G\times X$. Thus $f\in \C(X)^G$.
\end{proof}

\begin{corollary}\label{cor:RealGenComplex}
If $\INV=\{I_1,\hdots,I_s\}\subset \R(X(\R))^{G(\R)}$ generates $\C(X)^G$ then $\INV$ generates $\R(X(\R))^{G(\R)}$.
\end{corollary}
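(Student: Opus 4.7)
My plan is to carry out a short field-extension argument on the tower $L_0 \subset L \subset K$, where $L_0 \coloneq \R(\INV)$, $L \coloneq \R(X(\R))^{G(\R)}$, and $K \coloneq \C(X)^G$. By \Cref{prop:RealComplexInvFields} we already know $L \subset K$, and by hypothesis $\INV$ generates $K$ as a field over $\C$; the task is to show that the (a priori smaller) subfield $L_0 \subset L$ actually equals $L$.

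The first step is to rewrite the hypothesis as $K = L_0(i)$. Indeed, since $\C = \R(i)$,
\[
  K = \C(\INV) = \R(\INV)(i) = L_0(i).
\]
The second step is to observe that $L_0 \subset L$ consists of real rational functions when viewed inside $\C(X)$ through the embedding of \Cref{prop:RealComplexInvFields}, so neither $L_0$ nor $L$ contains $i$. Consequently $i$ satisfies the irreducible polynomial $X^2 + 1$ over $L_0$, giving $[K : L_0] = 2$, and also $L \subsetneq K$.

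The conclusion then follows immediately from the tower law:
\[
  2 = [K : L_0] = [K : L]\,[L : L_0] \ge 2 \cdot [L : L_0],
\]
which forces $[L : L_0] = 1$, i.e.\ $L = L_0 = \R(\INV)$. (Equivalently, one can phrase this Galois-theoretically: complex conjugation $\sigma$ generates $\operatorname{Gal}(K/L_0)$ and fixes every element of $L$, so $L$ lies in the fixed field $L_0$.)

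The only step that calls for a moment of care, and hence the main (mild) obstacle, is making precise the identification of $\R(X(\R))$ with its image inside $\C(X)$, so that the chain $L_0 \subset L \subset K$ is a genuine tower of subfields and the statement $i \notin L$ has its intended meaning. This is the standard fact that for $X$ defined over $\R$, the field $\R(X(\R))$ can be realized as the field of rational functions on $X$ with real coefficients, whose only complex scalars are the real numbers.
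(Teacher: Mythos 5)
Your proof is correct, but it takes a genuinely different route from the paper's. The paper argues element by element: given $f\in\R(X(\R))^{G(\R)}$, it writes $f=g(I_1,\dotsc,I_s)$ for some $g\in\C(y_1,\dotsc,y_s)$, splits $g=\operatorname{Re}(g)+i\cdot\operatorname{Im}(g)$, and uses the reality of $f$ and of the $I_j$ (via $2f=g(I_1,\dotsc,I_s)+\overline{g}(I_1,\dotsc,I_s)=2\operatorname{Re}(g)(I_1,\dotsc,I_s)$) to conclude that $f$ is already a \emph{real} rational function of the $I_j$. That argument is constructive: it tells you how to turn a complex rational expression for $f$ into a real one by taking the real part of its coefficients. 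You instead replace the computation by a degree count: $\C(X)^G=\C(\INV)=\R(\INV)(i)$ has degree $2$ over $\R(\INV)$, and the tower $\R(\INV)\subseteq\R(X(\R))^{G(\R)}\subsetneq\C(X)^G$ (strict because $i$ lies in the top field but not the middle one) forces the first inclusion to be an equality; equivalently, $\R(X(\R))^{G(\R)}$ lies in the fixed field of coefficient conjugation on $\R(\INV)(i)$, which is exactly $\R(\INV)$. This is shorter and cleaner, at the price of being non-constructive. The identification you flag as the one delicate point --- realizing $\R(X(\R))$ as the real-coefficient, conjugation-stable subfield of $\C(X)$ so that ``$i\notin L$'' is meaningful --- is precisely the identification the paper already makes implicitly in \Cref{prop:RealComplexInvFields}, so it costs you nothing extra. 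Both proofs are valid.
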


\begin{proof}
Suppose that $\INV$ generates $\C(X)^G$ and that $f\in  \R(X(\R))^{G(\R)}$. Then there exists a rational function $g\in \C(y_1,\hdots,y_s)$ such that $f = g(I_1,\hdots,I_s)$. We can decompose $g$ as $g= \text{Re}(g)+i\cdot\text{Im}(g)$ where $\text{Re}(g), \cdot\text{Im}(g)\in \R(y_1,\hdots,y_s)$. Since $f$ is a real rational function 

$$
2 f =  [\text{Re}(g)+i\cdot\text{Im}(g)] +  [\text{Re}(g)-i\cdot\text{Im}(g)] = 2 \text{Re}(g).
$$ 

Thus $g$ must lie in $\R(y_1,\hdots,y_s)$ proving the result.
\end{proof}

\begin{proposition}\label{prop:SepGenReal}
Suppose that $\R(X(\R))^{G(\R)}$ separates orbits for the action of $G(\R)$ on $X(\R)$. Then so does any generating set for $\R(X(\R))^{G(\R)}$.
\end{proposition}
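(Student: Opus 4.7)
Given a generating set $\INV = \{I_\alpha\}$ for $\R(X(\R))^{G(\R)}$, the plan is to produce a non-empty Zariski-open $V \subset X(\R)$ on which $\INV$ separates orbits, starting from the non-empty Zariski-open $U \subset X(\R)$ on which, by hypothesis, the entire invariant field separates orbits (see \Cref{def:GenSep}).

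First I would reduce to a finite subfamily $\{I_1, \ldots, I_s\} \subset \INV$ that still generates $\R(X(\R))^{G(\R)}$; this is available because $\R(X(\R))^{G(\R)}$ embeds in the finitely generated field $\C(X)^G$ (\Cref{prop:RealComplexInvFields}, \Cref{prop:GenSep}) and one may then appeal to \Cref{cor:RealGenComplex}. Next, I would shrink $U$ by removing the (proper Zariski-closed) pole loci of $I_1, \ldots, I_s$, obtaining a non-empty Zariski-open $V \subset U$ on which the rational map $\Psi = (I_1, \ldots, I_s) \colon X(\R) \to \R^s$ is everywhere defined.

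I then pick $p, q \in V$ with $\Psi(p) = \Psi(q)$ and aim to show that $p$ and $q$ lie in the same $G(\R)$-orbit. By the separating hypothesis on $U \supset V$, it suffices to verify that $f(p) = f(q)$ for every $f \in \R(X(\R))^{G(\R)}$ regular at both $p$ and $q$. Writing $f = R_f(I_1, \ldots, I_s)$ with $R_f \in \R(y_1, \ldots, y_s)$, which is possible by the generating property, the chain $f(p) = R_f(\Psi(p)) = R_f(\Psi(q)) = f(q)$ is immediate whenever $R_f$ is regular at the common value $\Psi(p)$.

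The main obstacle is to handle invariants $f$ that are regular at $p$ but for which the chosen representative $R_f$ has a pole at $\Psi(p)$. My strategy is to replace $R_f$ by another representative regular at $\Psi(p)$, available because such an $f$ descends to the local ring $\mathcal{O}_{Y, \Psi(p)}$ of the image variety $Y = \overline{\Psi(V)}$. To make this selection uniform across all $f$, I would pre-shrink $V$ to the Zariski-open locus where $\Psi$ restricts to a smooth dominant morphism onto $Y$ — non-empty by generic smoothness — so that $\Psi^\ast$ identifies $\mathcal{O}_{Y, \Psi(p)}$ with the $\Psi$-pullback portion of $\mathcal{O}_{X(\R), p}$ and an admissible representative exists for every regular invariant simultaneously, closing the argument.
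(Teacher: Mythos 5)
Your proof is correct, and its skeleton is the same as the paper's: agreement of the generators at $p$ and $q$ forces agreement of every invariant, and the separation hypothesis then places $p$ and $q$ in the same orbit. The genuine difference is that the paper's four-line proof silently passes over exactly the point you isolate — an invariant $f$ regular at both $p$ and $q$ may only come with a representative $R_f$ in the generators whose denominator vanishes at the common value $\Psi(p)$, so the chain $f(p)=R_f(\Psi(p))=R_f(\Psi(q))=f(q)$ is not automatic. Your repair (shrink to the smooth locus of $\Psi$, so that $\mathcal{O}_{Y,\Psi(p)}\to\mathcal{O}_{X,p}$ is faithfully flat and hence $\mathcal{O}_{Y,\Psi(p)}=\mathcal{O}_{X,p}\cap\R(Y)$, giving every invariant regular at $p$ a representative with nonvanishing denominator at $\Psi(p)$) is sound, and the shrinking of the open set it forces is permitted because Definition \ref{def:GenSep} only asks for separation on \emph{some} non-empty Zariski-open subset. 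Two minor caveats: the reduction to a finite generating subfamily follows most directly from the fact that any intermediate field of the finitely generated extension $\R(X(\R))/\R$ is finitely generated (Corollary \ref{cor:RealGenComplex} is not quite the right tool there); and the passage between the complex variety, where generic smoothness and density of Zariski-open sets live, and its set of real points is being taken for granted — harmless in the paper's application, where $X(\R)$ is Zariski-dense in $X$, but worth flagging since the whole point of this subsection is that the real setting is more delicate.
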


\begin{proof}
Suppose that $\INV=\{I_1,I_2,\hdots\}$ generates $\R(X(\R))^{G(\R)}$ and that $\R(X(\R))^{G(\R)}$ separates orbits. Then for any two points $p_1, p_2\in X(\R)$ if
$$
I_1(p_1)=I_1(p_2), I_2(p_1)=I_2(p_2), \hdots
$$
for all invariants in $\INV$, then we also have $I(p_1)=I(p_2)$ for any invariant $I\in \R(X(\R))^{G(\R)}$ as $\INV$ generates $\R(X(\R))^{G(\R)}$. Thus $p_1$ and $p_2$ lie in the same orbit under $G(\R)$.
\end{proof}

\section{Orthogonal invariants on \texorpdfstring{$\lie{2}d$}{g2((Rd))}}\label{sec:complexdetour}

In this section we take a closer look at the action of $\O_d(\R)$ on $\lie{2}d \cong \RdSkew$.
In particular we construct an explicit linear space, of complementary dimension to the orbits, intersecting each orbit in a large open subset of this space.
To achieve this, we consider the associated action of the \textit{complex} group $\O_d(\C)$ on the space $\CdSkew$ where
$$
\O_d(\C) = \{A\in \GL_d(\C) \mid AA^T = \text{id}\}.
$$

As described in Section \ref{ssec:alggroups}, we can consider $\O_d(\R)$ and $\RdSkew$ as the real points of the varieties $\O_d(\C)$ and $\CdSkew$.
 
\begin{remark}
  The real Lie group 
  \begin{align*}
    \O_d(\R) \coloneq \{ A \in \R^{d\times d} : A A^\top = \id \},
  \end{align*}
  can be considered as a subgroup of the Lie group
  \begin{align*}
    \O_d(\C) \coloneq \{ A \in \C^{d\times d} : A A^\top = \id \}.
  \end{align*}
  We note that $\O_d(\C)$ is a complex Lie group, in
  contradistinction to the
  Lie group of unitary matrices
  \begin{align*}
    \operatorname{U}_d \coloneq \{ A \in \C^{d\times d}: A^* A = \id \},
  \end{align*}
  where $A^*$ is the conjugate transpose of $A$.
  Even though $\operatorname{U}_d$ contains matrices with complex entries,
  it is a real Lie group.
\end{remark}

By investigating the associated complex action, we can utilize tools such as the relative sections described in Definition \ref{def:RelSec}, and then pass these results down to the real points. As before in  \eqref{eq:simpleForm} the action of $\OdC$ on $\CdSkew$ is given by
\begin{equation}\label{eq:OdRActionLie2}
A\cdot (v,M) = (Av,AMA^T).
\end{equation}

We denote the entries as
$$
v = \begin{bmatrix}
    \vv_{1}\\
    \vv_{2}\\
    \vdots\\
    \vv_{ d}
\end{bmatrix}, \quad M = \begin{bmatrix}
0 & \m_{12} & \m_{13} & \hdots  &\m_{1d}\\
-\m_{12} & 0 & \m_{23} & \hdots & \m_{2d}\\
-\m_{13} & -\m_{23} & 0 & \hdots  & \vdots \\
\vdots & && \ddots & \m_{(d-1)d}\\
-\m_{1d} & -\m_{2d} & \hdots & -\m_{(d-1)d} &0
\end{bmatrix}
$$
to make explicit the connection to Section \ref{sec:MFsection}.

\begin{proposition}\label{prop:NonZeroNormRot}
For any $v\in \C^d$ such that $\vv_1^2+\dotsb+\vv_d^2\neq 0$, there exists $A\in \OdC$ such that
$\overline{v}=Av$ satisfies $\overline{\vv}_{1}=\dotsb=\overline{\vv}_{ d-1}=0$ and $\overline{\vv}_{d}\neq 0$.
\end{proposition}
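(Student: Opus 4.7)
The plan is to construct $A$ explicitly as a complex Householder reflection. The essential observation is that $\OdC$ preserves the complex symmetric bilinear form $\langle v,w\rangle \coloneq v^\top w$, so for any candidate target $\bar v$ we must have $\bar v^\top \bar v = v^\top v$. In our case $\bar v = (0,\dotsc,0,\bar v_d)^\top$, so $\bar v_d$ must be a square root of the nonzero complex number $v_1^2+\dotsb+v_d^2$. Since nonzero complex numbers have two nonzero square roots, this also automatically yields $\bar v_d\neq 0$, matching the conclusion.

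So let $c\in\C$ satisfy $c^2 = v^\top v$, and set
\[
    u \coloneq v - c\, e_d,\qquad A \coloneq I - \frac{2}{u^\top u}\, u u^\top,
\]
provided $u^\top u\neq 0$. First I would check that $A\in\OdC$: the matrix is visibly symmetric, and a direct computation gives $A^\top A = A^2 = I - \tfrac{4}{u^\top u} u u^\top + \tfrac{4(u^\top u)}{(u^\top u)^2} u u^\top = I$. Next I would verify $Av = c\, e_d$: expanding $u^\top v = v^\top v - c v_d = c^2 - c v_d$ and $u^\top u = v^\top v - 2 c v_d + c^2 = 2(c^2 - c v_d) = 2\, u^\top v$, so $\tfrac{2\,u^\top v}{u^\top u} = 1$, whence $Av = v - u = c\, e_d$.

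The one subtlety, which I anticipate is the only real obstacle, is the possibility $u^\top u = 0$, since then the Householder formula is undefined. From the calculation above, $u^\top u = 0$ forces $c^2 = c v_d$, i.e. $c = v_d$ (using $c\neq 0$). The remedy is that we have two choices for the square root, namely $\pm c$. If both $+c$ and $-c$ equal $v_d$ then $v_d = 0$, but in that case $u^\top u = 2(c^2 - c v_d) = 2 c^2 = 2\, v^\top v \neq 0$, contradicting the assumption that $u^\top u = 0$. Hence at least one sign choice of the square root produces a valid, nondegenerate reflection, completing the construction.
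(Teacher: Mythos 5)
Your proof is correct, but it takes a genuinely different route from the paper's. The paper reduces $v$ iteratively by complex Givens rotations: it first observes that $(d-1)(v_1^2+\dotsb+v_d^2)$ equals the sum of all pairwise sums of squares, so some pair satisfies $v_i^2+v_j^2\neq 0$; it then applies a $2\times 2$ rotation block with entries $v_j/w,\pm v_i/w$ (where $w^2=v_i^2+v_j^2$) to annihilate one coordinate, and repeats. You instead produce $A$ in one shot as a complex Householder reflection $I-\tfrac{2}{u^\top u}uu^\top$ with $u=v-c\,e_d$, $c^2=v^\top v$. Your identification of the degenerate case is exactly right: over $\C$ the condition $u^\top u=0$ no longer forces $u=0$, it is equivalent to $c=v_d$ (given $c\neq 0$), and switching the sign of the square root always escapes it since $c=v_d=-c$ would force $c=0$. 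What each approach buys: your construction is a single closed-form orthogonal matrix and makes transparent why $\overline{v}_d\neq 0$ is automatic (it must be a square root of the invariant $v^\top v$); the paper's iterative Givens scheme requires the small combinatorial lemma about pairwise sums of squares and a termination argument, but it mirrors the recursive structure used later in the proof of Proposition \ref{prop:ComplexCS}, where the same elementary rotations are reapplied at each stage of the chain of relative sections. Both land in $\O_d(\C)$ rather than $\SO_d(\C)$ (your reflection has determinant $-1$, the Givens rotations $+1$), which is immaterial here.
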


\begin{proof}

The function $(d-1)(\vv_{1}^2+\dotsb+\vv_{d}^2)$ can be written as the sum of all pairwise sum of squares, i.e.

$$
(d-1)(\vv_{1}^2+\dotsb+\vv_{d}^2) = \sum_{i=1}^d\sum_{j\neq i} \left( \vv_{i}^2+\vv_{j}^2 \right).
$$

Suppose that $\vv_{1}^2+\dotsb+\vv_{d}^2\neq 0$ and that there exists some $\vv_{i}\neq 0$ where $1\leq i\leq d-1$. (Otherwise we are done by choosing $A$ as the identity.)
By the above equation, there exists a pair of coordinates $\vv_{i}$ and $\vv_{j}$ such that $\vv_{i}^2+\vv_{j}^2\neq 0$ for some $1\leq i < j\leq d$.

Choose the matrix $A \in \OdC$ defined by

$$
a_{k\ell}=\begin{dcases} 
      1 & k=\ell \neq i,j \\
     \frac{\vv_{j}}{w} & k=\ell=i,j \\
      -\frac{\vv_{i}}{w}& k=i, \ell=j\\
      \frac{\vv_{j}}{w} & k=j, \ell=i\\
      0 & \text{otherwise}
   \end{dcases}
$$
where $w$ is an element of $\C$ that satisfies $w^2 = \vv_i^2+\vv_j^2$. The transformation $A$ is the complex analogue to a Givens Rotation which only rotates two coordinates. Then for $Av = \overline{v}$ we have that $\overline{\vv}_k=\vv_k$ for $k\not\in \{ i,j \}$, $\overline{\vv}_i = 0$, and $\overline{\vv}_j = w \neq 0$. This process can be repeated until $\overline{v}$ is of the desired form.
\end{proof}

We define a sequence of linear subspaces of $\CdSkew$ as
\newcommand\LC[2]{{L_{#1}^{(#2)}}}
\begin{align}
\label{eq:Ls}
\begin{split}
\LC{d}1 &= \{ (v,M)\in \CdSkew \,|\, \vv_1=\cdots=\vv_{d-1}=0\},\\
\LC{d}i &= \{ (v,M)\in \LC{d}{i-1}\,|\, \m_{1(d-i+2)}=\cdots=\m_{(d-i)(d-i+2)}=0\}, \qquad 2 \leq i \leq d-1.
\end{split}
\end{align}
\nomenclature[Ldi]{$\LC{d}{i}$}{The relative $\NC{d}{d-i}$-section for the action of $\OdC$ on $\CdSkew$}
\newcommand\LL{\LC{d}{d-1}}
In particular the subspace $\LC{d}{d-1}$\nomenclature[Ld]{$\LL$}{The relative $\Wd$-section for the action of $\OdC$ on $\CdSkew$} is given by pairs $(v,M)$ of the form
\begin{equation}\label{eq:vMform}
v = \begin{bmatrix}
0\\
0\\
\vdots\\
\vv_d
\end{bmatrix}\quad M = \begin{bmatrix}
0 & \m_{12} & 0 & \hdots  &0\\
-\m_{12} & 0 & \m_{23} & \hdots & 0\\
0 & -\m_{23} & 0 & \hdots  & \vdots \\
\vdots & && \ddots & \m_{(d-1)d}\\
0 & 0 & \hdots & -\m_{(d-1)d} &0
\end{bmatrix}.
\end{equation}

\begin{example}
  For $d=4$,
  elements in $\LC{4}1$
  are of the form
  \begin{align*}
    (
    \begin{bmatrix}
      0  \\
      0  \\
      0  \\
      *  
    \end{bmatrix},
    \begin{bmatrix}
      0 &* &* &*  \\
      * &0 &* &*  \\
      * &* &0 &*  \\
      * &* &* &0 
    \end{bmatrix}),
  \end{align*}
  elements in $\LC{4}2$
  are of the form
  \begin{align*}
    (
    \begin{bmatrix}
      0  \\
      0  \\
      0  \\
      *  
    \end{bmatrix},
    \begin{bmatrix}
      0 &* &* &0  \\
      * &0 &* &0  \\
      0 &* &0 &*  \\
      0 &* &* &0 
  \end{bmatrix})
\end{align*}
and elements in $\LC{4}3$ are of the form
\begin{align*}
    (
    \begin{bmatrix}
      0  \\
      0  \\
      0  \\
      *  
    \end{bmatrix},
    \begin{bmatrix}
      0 &* &0 &0  \\
      * &0 &* &0  \\
      0 &* &0 &*  \\
      0 &0 &* &0 
  \end{bmatrix}).
  \end{align*}
  Note again that all $\Skew{d}{\C}$ matrices are skew-symmetric and thus have zero diagonal.
\end{example}

We will show that $\LC{d}1, \LC{d}2, ..$ form a sequence of relative sections for the action of $\OdC$ on $\CdSkew$ (see Definition \ref{def:RelSec}).
For this aim we need to identify the normalizer subgroup for each $\LC{d}i$,
which will be achieved in \Cref{prop:LiNi}.

The group $\O_i(\C)$, for $1\leq i < d$ appears as a subgroup of $\OdC$ in several natural ways, in particular the subgroup obtained by considering elements that rotate some fixed subset of $i$ coordinates and fix the remaining coordinates.
For $B\in \O_i(\C)$, denote
\begin{equation}\label{eq:SOimatrix}
  E(B)=\begin{bmatrix}
B & 0 &\cdots & 0\\
0 & 1 & \cdots &0\\
0 &  0 & \ddots & 0\\
0 & 0 & \cdots &1
\end{bmatrix},
\end{equation}
a matrix rotating the first $i$ coordinates and fixing the last $d-i$.
The set of such $E(B)$ forms a subgroup of $\OdC$ isomorphic to $\O_i(\C)$ which we will denote\nomenclature[OdiC]{$\OiC$}{The subgroup of $\OdC$ isomorphic to $\O_i(\C)$ which leaves the last $d-i$ components of a $\C^d$ vector invariant} 
\begin{align*}
 \OiC.
\end{align*}
Note that $\OiC \subset \O^{i+1}_d(\C)$.

\begin{proposition}\label{prop:Roti}
  Let $1\leq i < d$ and
  $B \in \O_i(\C)$.
  The image of the coordinates $\m_{1(i+1)}, \m_{2(i+1)},\hdots, \m_{i(i+1)}$ under the action of $E(B) \in\OiC$ on $(v,M) \in \CdSkew$ is given by
  \begin{align*}
    B
\begin{bmatrix}
\m_{1(i+1)}\\
\m_{2(i+1)}\\
\vdots \\
\m_{i(i+1)}
\end{bmatrix},
  \end{align*}
  the standard action of $\O_i(\C)$ on a vector in $\C^i$.
\end{proposition}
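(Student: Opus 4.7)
The plan is to carry out the relevant block-matrix computation directly using the formula $A \cdot (v,M) = (Av, A M A^\top)$ from \eqref{eq:OdRActionLie2}, specialized to $A = E(B)$. No deeper ingredients are needed; the content of the proposition is just the statement that the block structure of $E(B)$ decouples the first $i$ entries of the $(i+1)$-th column of $M$ from everything else.

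First, I would exploit the block structure of $E(B)$ appearing in \eqref{eq:SOimatrix}: its $(i+1)$-th row equals the standard basis row vector $e_{i+1}^\top$, hence the $(i+1)$-th column of $E(B)^\top$ is $e_{i+1}$. Therefore, for any matrix $N$, one has $(N\, E(B)^\top)_{k,i+1} = N_{k,i+1}$. Applying this with $N = E(B) M$ reduces the problem to computing the first $i$ entries of the $(i+1)$-th column of $E(B) M$.

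Next, for $1 \leq k \leq i$, the $k$-th row of $E(B)$ has entries $B_{k\ell}$ for $\ell \leq i$ and zero for $\ell > i$. Consequently
\[
    \bigl(E(B) M\bigr)_{k, i+1} \;=\; \sum_{\ell=1}^{d} E(B)_{k\ell}\, M_{\ell, i+1} \;=\; \sum_{\ell=1}^{i} B_{k\ell}\, \m_{\ell(i+1)}.
\]
Stacking these identities for $k=1,\dotsc,i$ yields
\[
    \bigl(\,(E(B) M E(B)^\top)_{k,i+1}\bigr)_{k=1}^{i} \;=\; B \begin{bmatrix} \m_{1(i+1)} \\ \m_{2(i+1)} \\ \vdots \\ \m_{i(i+1)} \end{bmatrix},
\]
which is exactly the claim. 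There is no substantial obstacle; the only thing to be careful about is bookkeeping the indices so that one correctly identifies which column of $M$ is isolated and why rows/columns beyond index $i$ contribute trivially, both of which follow immediately from the zero off-diagonal blocks of $E(B)$.
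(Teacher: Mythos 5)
Your proof is correct and follows the same route as the paper, which simply states that the claim follows from the action formula \eqref{eq:OdRActionLie2} and leaves the block-matrix computation implicit; you have merely written out that computation in full, and the index bookkeeping (the $(i+1)$-th row of $E(B)$ being $e_{i+1}^\top$, and the vanishing of $E(B)_{k\ell}$ for $k\le i<\ell$) is handled correctly.
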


\begin{proof}
  This follows from \eqref{eq:OdRActionLie2}.
\end{proof}

Consider the subgroup 
\newcommand\Wd{{W_d(\C)}}\nomenclature[WdC]{$\Wd$}{The group of diagonal matrices with diagonal entries in $\{-1,1\}$; the normalizer of $\LL$}
\begin{align*}
  \Wd := \Big\{ \text{diagonal matrices with diagonal entries lying in $\{-1,1\}$}\Big\} \subset \OdC.
\end{align*}
\newcommand\NC[2]{{N_{#1}^{#2}(\C)}}
The action of an element of $\Wd$ changes the sign of various coordinates of $\CdSkew$. We define the subgroup $\NC{d}{i}$\nomenclature[NdiC]{$\NC{d}{i}$}{The product of the groups $\OiC$ and $\Wd$; the normalizer of $\LC{d}{d-i}$} of $\OiC$ as

$$
\NC{d}{i} := \OiC\cdot \Wd = \{ g\cdot h\, |\, g\in \OiC, h\in \Wd\}.
$$

Note that $\NC{d}{i}$ exactly contains matrices of the form
\begin{equation}
  \label{eq:ofTheForm}
\begin{bmatrix}
B & 0 &\cdots & 0\\
0 & \pm1 & \cdots &0\\
0 &  0 & \ddots & 0\\
0 & 0 & \cdots &\pm1
\end{bmatrix},
\end{equation}
with $B \in \OiC$.

\begin{proposition}\label{prop:LiNi}
  For $1\leq i < d$, the normalizer of $\LC{d}i$ is equal to $\NC{d}{d-i}$.
\end{proposition}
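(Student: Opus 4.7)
The plan is to prove separately that every $A \in \NC{d}{d-i}$ leaves $\LC{d}{i}$ stable, and conversely that every $A \in \OdC$ which leaves $\LC{d}{i}$ stable lies in $\NC{d}{d-i}$. Unrolling the recursion \eqref{eq:Ls} shows that the elements $(v, M) \in \LC{d}{i}$ are exactly those for which $v = v_d\,e_d$, the upper-left $(d-i+1) \times (d-i+1)$ block of $M$ is an arbitrary skew-symmetric matrix, the subdiagonal entries $m_{d-i+1,d-i+2}, \ldots, m_{d-1,d}$ are free, and all remaining above-diagonal entries of $M$ vanish. Splitting $\C^d = \C^{d-i} \oplus \C^i$ to match the block shape \eqref{eq:ofTheForm} of elements of $\NC{d}{d-i}$, the matrix $M$ decomposes as a free skew-symmetric $(d-i) \times (d-i)$ upper-left block $M_{11}$, an upper-right $(d-i) \times i$ block $M_{12}$ whose only nonzero column is the first (encoding the free entries $m_{1,d-i+1}, \ldots, m_{d-i,d-i+1}$), and a tridiagonal skew-symmetric lower-right $i \times i$ block $M_{22}$.

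For the forward inclusion, fix $A = \begin{bmatrix} B & 0 \\ 0 & D \end{bmatrix} \in \NC{d}{d-i}$ with $B \in \O_{d-i}(\C)$ and $D$ diagonal with $\pm 1$ entries. Using \eqref{eq:OdRActionLie2} one gets $Av = \pm v_d\,e_d$ and
\[
AMA^T = \begin{bmatrix} B M_{11} B^T & B M_{12} D \\ -D M_{12}^T B^T & D M_{22} D \end{bmatrix}.
\]
The upper-left block is again a free skew-symmetric $(d-i) \times (d-i)$ matrix; right-multiplication of $B M_{12}$ by the diagonal matrix $D$ merely rescales columns, so $B M_{12} D$ retains the sparsity of $M_{12}$ (only its first column is nonzero, equal to $\pm Bu$, with $u$ the first column of $M_{12}$); and $(D M_{22} D)_{ab} = D_{aa} D_{bb} (M_{22})_{ab}$ preserves the tridiagonal sparsity of $M_{22}$. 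Thus $A \cdot (v, M) \in \LC{d}{i}$.

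For the reverse inclusion, let $A \in \OdC$ preserve $\LC{d}{i}$. Applying $A$ to $(e_d, 0) \in \LC{d}{i}$ forces $A e_d$ to have vanishing first $d-1$ components, and orthogonality then yields $A e_d = \pm e_d$ together with the $d$-th row of $A$ equal to $\pm e_d^T$. I then perform a downward induction on $c = d-1, d-2, \ldots, d-i+1$, maintaining the hypothesis that columns (and rows) $c+1, \ldots, d$ of $A$ are $\pm e_{c+1}, \ldots, \pm e_d$. For the inductive step, take the element $(0, M) \in \LC{d}{i}$ where $M$ is the skew-symmetric matrix with $m_{c,c+1} = 1$ and all other above-diagonal entries zero; this lies in $\LC{d}{i}$ since $c \geq d-i+1$ places $m_{c,c+1}$ among the free subdiagonal entries. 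Expanding
\[
AMA^T = (A e_c)(A e_{c+1})^T - (A e_{c+1})(A e_c)^T
\]
and using the inductive hypothesis $A e_{c+1} = \pm e_{c+1}$ gives $(AMA^T)_{k',c+1} = \pm a_{k',c}$ for $k' \leq c$. Since $c+1 \geq d-i+2$ puts column $c+1$ in the pruned zone of $\LC{d}{i}$, membership of $AMA^T$ in $\LC{d}{i}$ forces $a_{k',c} = 0$ for $k' < c$; orthogonality then fixes $a_{c,c} = \pm 1$ and the $c$-th row of $A$ as $\pm e_c^T$, closing the induction. At the end, $A$ is block-diagonal with an arbitrary $B \in \O_{d-i}(\C)$ in the upper-left $(d-i) \times (d-i)$ corner and diagonal $\pm 1$'s in the lower-right, so $A \in \NC{d}{d-i}$. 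The main obstacle is purely bookkeeping: matching the index convention of \eqref{eq:Ls} with the block shape of $\NC{d}{d-i}$ and choosing the right generators of $\LC{d}{i}$ to pin down $A$ one column at a time.
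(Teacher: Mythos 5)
Your proof is correct and rests on essentially the same idea as the paper's: act on judiciously chosen elements of $\LC{d}{i}$ (a vector along $e_d$, then the rank-two skew matrices $e_ce_{c+1}^{\top}-e_{c+1}e_c^{\top}$) to force the trailing columns and rows of a normalizing matrix to be $\pm e_c$, leaving an arbitrary orthogonal block in the upper-left corner. The differences are organizational rather than substantive — you fix $i$ and run a single downward induction on the column index with explicit test elements and an explicit block computation for the forward inclusion, whereas the paper inducts on $i$ and appeals to \Cref{prop:Roti}; your version is the more self-contained of the two.
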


\begin{proof}

  It is immediate that $\NC{d}{d-1}$ leaves the space $\LC{d}1$ invariant.
  Considering 
  \begin{align*}
    x = (
    \begin{bmatrix}
      0 \\
      \dots \\
      0 \\
      1
    \end{bmatrix} ,M) \in \LC{d}1,
  \end{align*}
  we see that for $g \in \O_d(\C)$
  to have
  \begin{align*}
    g x \in \LC{d}1, 
  \end{align*}
  we must have $g_{j d} = g_{d j} = 0$, $j=1,\dots,d-1$.
  This proves the claim for $i=1$.

  Let the statement be true for some $1\le i \le d-2$.
  First, the normalizer
  of $\LC{d}{i+1}$ is contained in $\LC{d}i$.
  Diagonal entries of $\pm 1$ leave every $\LC{d}j$ invariant,
  so it remains to investigate the matrix $B$ in \eqref{eq:ofTheForm}.
  Now by \Cref{prop:Roti} $B$ acts by standard matrix multiplication
  on the vector $(\m_{1(i+1)}, \dots, \m_{i(i+1)})^\top$.
  We can hence apply the argument of the case $\LC{d}1$ to deduce
  that $\NC{d}{d-(i+1)}$ is the normalizer of $\LC{d}{i+1}$.

\end{proof}

We now show that $\LL$ is a relative $\Wd$-section, by constructing a sequence of relative sections for the action, drawing inspiration from recursive moving frame algorithms (see \cite{Kog03} for instance).

\newcommand\UC{{U_d(\C)}}

\begin{proposition}\label{prop:ComplexCS}
The linear space $\LL$ is a relative $\Wd$-section for the action of $\OdC$ on $\CdSkew$.
More precisely, there exists a set of rational invariants\nomenclature[Id]{$\INV_d$}{The set of rational invariants defining $\UC$}

\begin{equation}\label{eq:Id}
\INV_d = \{f_1, \hdots, f_{d}\} \subset \C(\CdSkew)^{\OdC}
\end{equation}
such that if we define the invariant, non-empty, Zariski-open subset\nomenclature[UdC]{$\UC$}{The Zariski-open subset of $\CdSkew$ where none of the invariants in $\INV_d$ vanishes}
\begin{equation}\label{eq:UC}
\UC = \left\{ (v,M)\in\CdSkew\,\left|\, 
(v,M) \text{ is in the domain of each $f_k$ and }
\prod_{k=1}^{d} f_k(v,M) \neq 0\right\}\right.
\end{equation}
we have that $\LL$ intersects each orbit that is contained in $\UC$.
Furthermore we can restrict each invariant to obtain
\begin{itemize}
\item $f_1 = \vv_1^2+\hdots+\vv_d^2$,
\item $f_i|_{\LC{d}{i-1}} = \m_{1(d-i+2)}^2+\hdots+\m_{(d-i+1)(d-i+2)}^2$ for $2\leq i <d$.
\item $f_d|_{\LL} = \m_{12}^2$.
\end{itemize}
\end{proposition}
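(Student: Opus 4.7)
The strategy is to prove, by induction on $k \in \{1, \dotsc, d-1\}$, that each $\LC{d}{k}$ is a relative $\NC{d}{d-k}$-section for the action of $\OdC$ on $\CdSkew$, realized on an invariant Zariski-open subset cut out by non-vanishing of rational invariants $f_1, \dotsc, f_k \in \C(\CdSkew)^{\OdC}$ satisfying the stated restriction formulas. Two ingredients drive the induction: \Cref{prop:NonZeroNormRot}, which supplies the rotation needed to step one level deeper in the chain $\LC{d}{1} \supset \LC{d}{2} \supset \dotsb \supset \LC{d}{d-1}$, and \Cref{prop:NSec}, which turns an $\NC{d}{d-k}$-invariant on $\LC{d}{k}$ into an $\OdC$-invariant on all of $\CdSkew$. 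The normalizers themselves have already been identified in \Cref{prop:LiNi}.

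For the base case, $f_1 = \vv_1^2 + \dotsb + \vv_d^2$ is manifestly an $\OdC$-invariant polynomial on $\CdSkew$; \Cref{prop:NonZeroNormRot} shows that on $\{f_1 \neq 0\}$ every $\OdC$-orbit meets $\LC{d}{1}$, and combined with \Cref{prop:LiNi} this makes $\LC{d}{1}$ a relative $\NC{d}{d-1}$-section on this invariant open subset. For the inductive step, assume $\LC{d}{k}$ is already a relative $\NC{d}{d-k}$-section on $\{f_1 \neq 0, \dotsc, f_k \neq 0\}$, and introduce on $\LC{d}{k}$ the quadratic function
\[
g_{k+1}(v,M) \coloneq \m_{1(d-k+1)}^2 + \dotsb + \m_{(d-k)(d-k+1)}^2.
\]
By \Cref{prop:Roti} with $i = d-k$, the subgroup $\O^{d-k}_d(\C) \subset \NC{d}{d-k}$ acts on the vector $(\m_{1(d-k+1)}, \dotsc, \m_{(d-k)(d-k+1)})^\top$ by the standard $\O_{d-k}(\C)$-action, while $\Wd$ only flips signs of these entries; hence $g_{k+1}$ is $\NC{d}{d-k}$-invariant on $\LC{d}{k}$. \Cref{prop:NSec} then yields a unique $f_{k+1} \in \C(\CdSkew)^{\OdC}$ with $f_{k+1}|_{\LC{d}{k}} = g_{k+1}$. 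Applying \Cref{prop:NonZeroNormRot} in dimension $d-k$ to the same vector, on $\LC{d}{k} \cap \{g_{k+1} \neq 0\}$ we find an element of $\O^{d-k}_d(\C)$ that rotates it to the form $(0, \dotsc, 0, \ast)$, pushing the point into $\LC{d}{k+1}$; together with \Cref{prop:LiNi} this delivers the relative $\NC{d}{d-k-1}$-section property for $\LC{d}{k+1}$.

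At $k = d-1$ we obtain $\LL$ as a relative $\NC{d}{1}$-section, and since $\NC{d}{1} = \O^1_d(\C) \cdot \Wd = \Wd$ (the non-trivial element of $\O^1_d(\C)$ already lives in $\Wd$), this is precisely the desired relative $\Wd$-section statement. The invariant $f_d$ is constructed by the same mechanism: $\m_{12}^2$ is $\Wd$-invariant on $\LL$, so one further application of \Cref{prop:NSec} lifts it to a rational invariant $f_d \in \C(\CdSkew)^{\OdC}$ with $f_d|_{\LL} = \m_{12}^2$. The main subtlety in the argument is the interlocking of the two key propositions: having $\LC{d}{k}$ be a relative section is exactly what unlocks \Cref{prop:NSec} to manufacture $f_{k+1}$, whose non-vanishing is then precisely what is required to descend one more step down the chain to $\LC{d}{k+1}$.
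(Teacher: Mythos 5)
Your proposal is correct and follows essentially the same route as the paper's own proof: an induction down the chain $\LC{d}{1}\supset\dotsb\supset\LC{d}{d-1}$, using \Cref{prop:NonZeroNormRot} together with \Cref{prop:Roti} to supply the rotation into the next subspace, \Cref{prop:LiNi} for the normalizers, and the restriction isomorphism of \Cref{prop:NSec} to lift each $\NC{d}{d-k}$-invariant on $\LC{d}{k}$ to the rational invariant $f_{k+1}$ on $\CdSkew$, finishing with the lift of $\m_{12}^2$ to $f_d$. Your explicit justification that $\NC{d}{1}=\Wd$ is a small detail the paper passes over more quickly, but the argument is the same.
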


\begin{proof}
By Proposition \ref{prop:NonZeroNormRot}, outside of $f_1= ||v||^2=0$, there exists a rotation
$A_1\in\OdC$ such that $A_1\cdot (v,M)\in \LC{d}1$. Thus, by \Cref{prop:LiNi}, $\LC{d}1$ is a relative $\NC{d}{d-1}$-section. We also have that $f_1|_{\LC{d}1} = \vv_d^2$. We proceed by induction. Suppose that for each point in $U_i = \{ \prod_{k=1}^{i} f_k(p) \neq 0\}$ there exists a rotation $B_{i}\in \OdC$ such that $B_{i}\cdot (v,M) \in \LC{d}i$.

By Proposition \ref{prop:LiNi}, the linear space $\LC{d}i$ is a relative $\NC{d}{d-i}$-section and, by Proposition \ref{prop:NSec}, there exists a field isomorphism 
$\sigma_i : \C(\CdSkew)^{\OdC} \rightarrow \C(\LC{d}i)^{\NC{d}{d-i}}$\nomenclature[sigmai]{$\sigma_i$}{The field isomorphism $\sigma_i : \C(\CdSkew)^{\OdC} \rightarrow \C(\LC{d}i)^{\NC{d}{d-i}}$}. Using proposition \ref{prop:Roti}, one can show that on $\LC{d}i$ the polynomial $\m_{1(d-i+2)}^2+\hdots+\m_{(d-i+1)(d-i+2)}^2$ lies in $\C(\LC{d}i)^{\NC{d}{d-i}}$. Let $f_{i+1}$ be the unique element in $\C(\CdSkew)^{\OdC}$ such that $f_{i+1}= \sigma_i^{-1}( \m_{1(d-i+2)}^2+\hdots+\m_{(d-i+1)(d-i+2)}^2)$.

By Proposition \ref{prop:NonZeroNormRot}, for any $(v,M)\in \LC{d}i$ outside of $\{f_{i+1}(v,M)=0\}$, there exists a rotation $A_{i+1}\in \NC{d}{d-i}$ such that $A_{i+1}\cdot (v,M) \in \LC{d}{i+1}$. Thus for any $(v,M)$ in $U_{i+1} = \{ \prod_{k=1}^{i+1} f_k(v,M) \neq 0\}$ there exists a rotation $B_{i+1}=A_{i+1}B_i\in \OdC$ such that $B_{i+1}\cdot (v,M) \in \LC{d}{i+1}$. Using Proposition \ref{prop:LiNi}, again, we see that $\LC{d}{i+1}$ is a relative $\NC{d}{d-i-1}$-section.

We can continue this induction until we have $f_{d-1}$ where $f_{d-1}|_{\LC{d}{d-2}} = \m_{13}^2+\m_{23}^2$. Finally note that the polynomial $\m_{12}^2$ lies in $ \C(\LL)^\Wd$. 
Since $\LL$ is a $\Wd$-section
(since $\Wd = \NC{d}1$)
there exists $f_d\in \C(\CdSkew)^{\OdC}$ such that $f_d|_{\LL} = \m_{12}^2$.
\end{proof}

\begin{remark}
 Denoting $\varsigma_1:=\sigma_1,\,\varsigma_{i+1}:=\sigma_{i+1}\circ\sigma_i^{-1}$, we have the following chain of $O_d(\R)$ transformations $A_i$ and field isomorphisms $\varsigma_i$:
 \begin{center}
 \begin{tikzcd}
 \CdSkew \arrow[r, "A_1"] &
 \LC{d}1 \arrow[r, "A_2"] &
 \ldots \arrow[r, "A_{d-2}"] &
 \LC{d}{d-2} \arrow[r, "A_{d-1}"] &
 \LC{d}{d-1}
 \\
  \C(\CdSkew)^{\OdC} \arrow[r, "\varsigma_1"] & 
  \C(\LC{d}1)^{\NC{d}{d-1}} \arrow[r, "\varsigma_2"] &
  \ldots \arrow[r, "\varsigma_{d-2}"] &
  \C(\LC{d}{d-2})^{\NC{d}{2}} \arrow[r, "\varsigma_{d-1}"] &
  \C(\LC{d}{d-1})^{\Wd}
  \end{tikzcd}
  \end{center}
  Note though that while the $\varsigma_i$ are uniquely determined, the $A_i$ are not. The composition $A_{d-1}A_{d-2}\cdots A_{2}A_{1}$ however is unique up to a multiplication of a $\Wd$ matrix from the left.
\end{remark}
In particular the above proposition implies that $\LL$ is a relative $\Wd$-section for the action of $\OdC$ on $\CdSkew$, and hence the function fields $\C(\LL)^\Wd$ and $\C(\CdSkew)^{\OdC}$ are isomorphic. By examining the action of $\Wd$ on $\LL$ and the structure of $\C(\LL)^\Wd$ we can therefore glean information about the action of $\OdC$ on $\CdSkew$. Consider a diagonal matrix $D\in \Wd$ given by

$$
D = \begin{bmatrix}
w_1 & 0 &\hdots & 0\\
0 & w_2 & \hdots & 0\\
\vdots &  & \ddots &\vdots\\
0 & 0 & \hdots &w_d
\end{bmatrix}
$$
where $w_i \in \{-1,1\}$ for $1\leq i\leq d $. Then the image of a point in $L_d^{(d-1)}$ is $D\cdot (v, M)=(\overline{v}, \overline{M})$ where
\begin{center}
\resizebox{.95\linewidth}{!}{
  \begin{minipage}{\linewidth}
\begin{equation}\label{eq:Waction}
\overline{v} = \begin{bmatrix}
0\\
0\\
\vdots\\
w_d\vv_d
\end{bmatrix}\quad \overline{M} = \begin{bmatrix}
0 & w_1w_2\m_{12} & 0 & \hdots  &0\\
-w_1w_2\m_{12} & 0 & w_2w_3\m_{23} & \hdots & 0\\
0 & -w_2w_3\m_{23} & 0 & \hdots  & \vdots \\
\vdots & && \ddots & w_{d-1}w_d\m_{(d-1)d}\\
0 & 0 & \hdots & -w_{d-1}w_d\m_{(d-1)d} &0
\end{bmatrix}.
\end{equation}
\end{minipage}
}
\end{center}

\begin{proposition}\label{prop:WFree}
The action of $\Wd$ on $\LL\cap \UC$ is free.
\end{proposition}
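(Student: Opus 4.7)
The plan is to unpack the action of $\Wd$ on $\LL$ from equation \eqref{eq:Waction} and combine it with the explicit characterization of $\UC\cap\LL$ coming from the restrictions of the invariants $f_1,\ldots,f_d$ listed in \Cref{prop:ComplexCS}.

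First I would determine what $\UC\cap\LL$ looks like in coordinates. On $\LL$ the only potentially nonzero entries of $(v,M)$ are $\vv_d$ and the superdiagonal entries $\m_{12},\m_{23},\ldots,\m_{(d-1)d}$. Plugging the form \eqref{eq:vMform} into the formulas $f_1=\vv_1^2+\cdots+\vv_d^2$ and $f_i|_{\LC{d}{i-1}}=\m_{1(d-i+2)}^2+\cdots+\m_{(d-i+1)(d-i+2)}^2$ (which on $\LL$ collapses to the single surviving term $\m_{(d-i+1)(d-i+2)}^2$), and similarly for $f_d|_{\LL}=\m_{12}^2$, one sees that $(v,M)\in \LL\cap\UC$ is equivalent to requiring $\vv_d\neq 0$ and $\m_{i(i+1)}\neq 0$ for every $1\le i\le d-1$.

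Next I would take an arbitrary $D=\mathrm{diag}(w_1,\ldots,w_d)\in\Wd$ in the stabilizer of such a point. Using \eqref{eq:Waction}, the condition $D\cdot(v,M)=(v,M)$ translates into the system
\begin{equation*}
  w_d\vv_d=\vv_d,\qquad w_iw_{i+1}\m_{i(i+1)}=\m_{i(i+1)}\quad\text{for } i=1,\ldots,d-1.
\end{equation*}
Because every coordinate appearing here is nonzero on $\LL\cap\UC$, these reduce to $w_d=1$ and $w_iw_{i+1}=1$ for $i=1,\ldots,d-1$.

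Finally I would conclude by a short backward induction: $w_d=1$, and $w_{d-1}w_d=1$ gives $w_{d-1}=1$; more generally $w_iw_{i+1}=1$ together with $w_{i+1}=1$ forces $w_i=1$. Hence $D=\id$, so the stabilizer of any point in $\LL\cap\UC$ is trivial, which is exactly the freeness condition of \Cref{def:free}. I do not anticipate any real obstacle here; the work was already done in setting up $\UC$ so that every coefficient that parametrizes $\Wd$ through \eqref{eq:Waction} is nonvanishing.
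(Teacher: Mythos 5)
Your proof is correct and captures the same essential mechanism as the paper's: on $\LL\cap\UC$ the surviving coordinates $\vv_d$ and $\m_{i(i+1)}$ are all nonzero, forcing $w_d=1$ and $w_iw_{i+1}=1$, which pins down $D=\id$. The only cosmetic difference is direction: the paper argues by contradiction (assume some $w_i=-1$ with $i\le d-1$ and propagate the sign forward until it clashes with $w_d=1$), whereas you argue directly (start from $w_d=1$ and propagate backward). Both are a restatement of the same linear constraint system, so the two proofs are logically equivalent; your version avoids the contradiction framing and also makes the characterization of $\LL\cap\UC$ explicit, which the paper leaves implicit.
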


\begin{proof}
Suppose that the action is not free. Then there exists $D\in \Wd$ such that $D\cdot (v,M) = (v,M)$ and $D$ is not the identity. Necessarily we have that for some $1\leq i\leq d-1$, $w_i=-1$. Since $w_iw_{i+1}\m_{i(i+1)} = \m_{i(i+1)}$ and $\m_{i(i+1)}\neq 0$, then $w_{i+1}=-1$. Using a similar argument, $w_{i+2}=-1$ and so forth. However $w_d\vv_d=\vv_d$, where $\vv_d\neq 0$, implying that $w_d=1$ which is a contradiction.
\end{proof}

\begin{corollary}\label{cor:OFree}
The action of $\OdC$ on $\UC\subset \CdSkew$ is free.
\end{corollary}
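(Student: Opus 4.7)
The plan is to reduce the freeness question on $\UC$ to the simpler question on $\LL\cap\UC$, then to exploit the rigid tridiagonal structure of points in $\LL\cap\UC$ to show every $\OdC$-stabilizer there is automatically contained in $\Wd$, at which point \Cref{prop:WFree} closes the argument.

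Concretely, given $p\in\UC$ with $A\cdot p = p$ for some $A\in\OdC$, I would first use \Cref{prop:ComplexCS} to find $g\in\OdC$ with $g\cdot p\in\LL$; since $\UC$ is the complement of the vanishing locus of $\OdC$-invariants and hence $\OdC$-invariant, $g\cdot p\in\LL\cap\UC$. Because $gAg^{-1}$ stabilizes $g\cdot p$, it suffices to prove that the $\OdC$-stabilizer of any $p' = (v,M)\in\LL\cap\UC$ is trivial.

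By the explicit restrictions $f_i|_{\LL}$ in \Cref{prop:ComplexCS}, such a $p'$ has $v=v_d e_d$ with $v_d\neq 0$ and $M$ tridiagonal skew-symmetric with every super-diagonal entry $m_{j(j+1)}$, $1\le j\le d-1$, nonzero. The heart of the argument is an iterated block reduction. From $Av=v$ and orthogonality I get $A=\mathrm{diag}(A^{(d-1)},1)$ with $A^{(d-1)}\in\O_{d-1}(\C)$. Decomposing $AMA^\top = M$ block-wise and using that the last column of $M$ above the diagonal contains only the nonzero entry $m_{(d-1)d}$ forces $A^{(d-1)}e_{d-1}=e_{d-1}$, hence $A^{(d-1)}=\mathrm{diag}(A^{(d-2)},1)$. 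Iterating through the chain $m_{(d-1)d}, m_{(d-2)(d-1)},\dotsc, m_{23}$, after $d-1$ reduction steps $A$ takes the form $\mathrm{diag}(\pm 1, 1,\dotsc, 1)\in\Wd$, and \Cref{prop:WFree} immediately yields $A=\id$.

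I expect the main obstacle to lie in the bookkeeping of the cascading block decompositions; each individual step is a routine consequence of orthogonality together with the non-vanishing of the relevant super-diagonal entry of $M$ on $\LL\cap\UC$. Alternatively, one could use the constraint coming from $m_{12}\neq 0$ at the final stage to conclude directly that the remaining sign is $+1$, bypassing \Cref{prop:WFree}, but invoking the already-established freeness of $\Wd$ keeps the argument conceptually cleaner.
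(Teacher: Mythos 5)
Your proof is correct and follows essentially the same route as the paper's: reduce to a point of $\LL\cap\UC$ (legitimate because every orbit in $\UC$ meets $\LL$ and stabilizers along an orbit are conjugate), show that any stabilizing $A\in\OdC$ is forced into $\Wd$, then invoke Proposition~\ref{prop:WFree}. The only difference is that the paper's middle step cites ``the proof of Proposition~\ref{prop:LiNi}'' as a black box, whereas you unwind that same iterated block-reduction argument explicitly — using $Av=v$ with $v=v_de_d$, $v_d\neq 0$ to force $A=\mathrm{diag}(A^{(d-1)},1)$, then the nonvanishing super-diagonal entries $m_{(d-1)d},\dotsc,m_{23}$ to cascade the block structure down to $A=\mathrm{diag}(\pm1,1,\dotsc,1)\in\Wd$. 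That cascade is precisely what the inductive proof of Proposition~\ref{prop:LiNi} does, so there is no genuine divergence; your writeup is simply more self-contained. (A small note: the paper's proof states $(v,M)\in\LL$ but really needs $(v,M)\in\LL\cap\UC$, as you correctly made explicit.)
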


\begin{proof}
By Proposition \ref{prop:ComplexCS}, each orbit on $\UC$ meets the linear subspace $\LL$. We show that the stabilizer of a point in $\LL\cap \UC$ contains only the identity. This is sufficient to prove the result, as any two points in the same orbit have isomorphic stabilizer groups.

Let $(v,M)\in \LL$ and consider $g\in G$ such that $g\cdot (v,M)=(v,M)$. By the proof of Proposition \ref{prop:LiNi} $g$ must lie in $\Wd$. However, by Proposition \ref{prop:WFree}, the only element of $\Wd$ fixing a point in $\LL\cap \UC$ is the identity.
\end{proof}

Since we have that $w_i^2=1$ for any $1\leq i \leq d$, clearly \nomenclature[IWdC]{$\INV_\Wd$}{The generating set for $\C(\LL)^\Wd$ given by $\sigma_{d-1}(\INV_d)$}
\begin{equation}\label{eq:WInv}
\INV_\Wd := \{ \vv_d^2, \m_{d(d-1)}^2,\hdots, \m_{12}^2\}
\end{equation}
is a set of invariant functions on $\LL$. 

\begin{proposition}\label{prop:WInvGen}
The set $\INV_\Wd$ separates orbits and is a generating set for $\C(\LL)^\Wd$.
\end{proposition}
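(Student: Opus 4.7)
The plan is to reduce the generating claim to the separation claim via \Cref{prop:GenSep}, and then prove separation by a direct inductive sign-fitting argument on the coordinates of $\LL$.

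First, I would observe that each function in $\INV_\Wd$ is indeed $\Wd$-invariant. This is immediate from \eqref{eq:Waction}: under any $D\in\Wd$ with diagonal entries $w_i\in\{-1,1\}$, the coordinate $\vv_d$ is multiplied by $w_d$ and $\m_{i(i+1)}$ is multiplied by $w_i w_{i+1}$, and since $w_i^2=1$ all the squared coordinates are fixed. Next, using $(\vv_d,\m_{12},\m_{23},\dotsc,\m_{(d-1)d})$ as global affine coordinates on $\LL\cong\C^d$, I would note that the $\Wd$-action is encoded by the sign characters $(w_1,\dotsc,w_d)\mapsto (w_d,\,w_1w_2,\,w_2w_3,\dotsc,\,w_{d-1}w_d)$.

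The heart of the argument is separation. Suppose $(v,M)$ and $(v',M')$ are two points of $\LL$ with $\vv_d^2=(\vv_d')^2$ and $\m_{i(i+1)}^2=(\m_{i(i+1)}')^2$ for all $1\leq i\leq d-1$. Then there exist signs $\epsilon_d,\epsilon_{i(i+1)}\in\{-1,1\}$ (not uniquely determined when a coordinate vanishes, in which case choose $+1$) such that $\vv_d'=\epsilon_d\vv_d$ and $\m_{i(i+1)}'=\epsilon_{i(i+1)}\m_{i(i+1)}$. Now I would construct $D\in\Wd$ by back-substitution: set $w_d:=\epsilon_d$, and then recursively define $w_{i}:=\epsilon_{i(i+1)}\cdot w_{i+1}$ for $i=d-1,d-2,\dotsc,1$. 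By construction, $w_d=\epsilon_d$ and $w_iw_{i+1}=\epsilon_{i(i+1)}$ for each $i$, so \eqref{eq:Waction} gives $D\cdot(v,M)=(v',M')$. Therefore $\INV_\Wd$ separates orbits on all of $\LL$, a fortiori on any non-empty Zariski-open subset, so $\INV_\Wd$ is separating in the sense of \Cref{def:GenSep}.

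Finally, since $\LL\cong\C^d$ is irreducible, \Cref{prop:GenSep} applies and yields that a separating subset of $\C(\LL)^\Wd$ is automatically a generating set of $\C(\LL)^\Wd$ as a field, completing the proof. The only subtle point is the treatment of vanishing coordinates during the back-substitution, but this is harmless: when $\m_{i(i+1)}=0$ either sign choice is compatible, and our recursive definition just fixes one of them; the resulting $D$ still carries $(v,M)$ to $(v',M')$. No genuine obstacle arises.
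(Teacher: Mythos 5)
Your proof is correct and follows essentially the same route as the paper: verify invariance, show that any two points of $\LL$ with equal values of $\INV_\Wd$ are related by an explicit element of $\Wd$ realizing the required sign pattern, and then invoke Proposition~\ref{prop:GenSep} to upgrade separation to generation. The only cosmetic difference is that you produce the sign vector $(w_1,\dotsc,w_d)$ by a single back-substitution, whereas the paper checks that each individual coordinate sign can be flipped; both arguments establish surjectivity of the character map $(w_1,\dotsc,w_d)\mapsto(w_d,w_1w_2,\dotsc,w_{d-1}w_d)$ onto $\{\pm1\}^d$.
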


\begin{proof}
Consider the map $F: \LL\cap \UC \rightarrow \C^d$ defined by evaluating the invariants in $\INV_\Wd$ on $\LL\cap \UC$, a non-empty, Zariski-open subset of $\LL$. We show that every fiber of this map is exactly an orbit of $\Wd$. Consider any $(v,M)\in \LL\cap \UC$; then set of points in the fiber of its image is given by

\begin{align*}
F^{-1}(F(v,M)) &= \{ (\tilde{v},\tilde{M}) \in \LL\cap \UC \, |\, \tilde{\vv}_d^2=\vv_d^2, \tilde{\m}_{12}^2 =\m_{12}^2, \hdots, \tilde{\m}_{(d-1)d}^2=\m_{(d-1)d}^2\}\\
&=\{(\tilde{v},\tilde{M}) \in \LL\cap \UC \, |\, \tilde{\vv}_d = \pm \vv_d, \tilde{\m}_{12} = \pm \m_{12}, \hdots, \tilde{\m}_{(d-1)d}=\pm \m_{(d-1)d} \}.
\end{align*}

We can individually change the sign for any coordinate of $(v,M)$. To change the sign of only $\vv_d$ one can act by the matrix $D\in \Wd$ such that $w_i=-1$ for all $1\leq i \leq d$. Similarly for $\m_{i(i+1)}$ we can act by the matrix $D\in \Wd$ such that $w_k = -1$ for $1\leq k \leq i$ and $w_k =1 $ otherwise. This implies that the above set is exactly the orbit of $(v,M)$ under $\Wd$, and hence $\INV_\Wd$ is separating on $\LL\cap \UC$.
Then by Proposition \ref{prop:GenSep}, $\INV_\Wd$ generates $\C(\LL)^W$.
\end{proof}

\begin{corollary}
The set $\INV_d$ in \eqref{eq:Id} is a minimal generating set of rational invariant functions for $\C(\CdSkew)^{\OdC}$ and separates orbits.
\end{corollary}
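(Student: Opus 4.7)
The plan is to reduce the statement, via the relative $\Wd$-section $\LL$, to the already-established \Cref{prop:ComplexCS} and \Cref{prop:WInvGen}, and then conclude minimality by a transcendence-degree count.

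First I would compute the restriction of $\INV_d$ to $\LL$. On $\LL$ every entry of $v$ except $\vv_d$ vanishes, and every entry of $M$ except the super-/subdiagonal entries $\m_{12},\m_{23},\dotsc,\m_{(d-1)d}$ vanishes; in particular $\LL\subset\LC{d}{i-1}$ for every $2\le i\le d-1$, so the formulas in \Cref{prop:ComplexCS} are applicable. In the sum $\m_{1(d-i+2)}^2+\dotsb+\m_{(d-i+1)(d-i+2)}^2$ describing $f_i|_{\LC{d}{i-1}}$, the only entry that is not forced to be zero on $\LL$ is the last one, $\m_{(d-i+1)(d-i+2)}^2$, since the indices of the remaining terms differ by more than one. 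Combined with $f_1|_{\LL}=\vv_d^2$ and $f_d|_{\LL}=\m_{12}^2$, this gives
\[
\INV_d|_{\LL}=\{\vv_d^2,\m_{(d-1)d}^2,\m_{(d-2)(d-1)}^2,\dotsc,\m_{23}^2,\m_{12}^2\}=\INV_\Wd,
\]
with $\INV_\Wd$ as defined in \eqref{eq:WInv}.

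Second, by \Cref{prop:WInvGen} the set $\INV_\Wd$ generates the field $\C(\LL)^\Wd$. Since $\LL$ is a relative $\Wd$-section for the action of $\OdC$ on $\CdSkew$ by \Cref{prop:ComplexCS}, \Cref{cor:RelSecGen} applies and yields that $\INV_d$ generates $\C(\CdSkew)^{\OdC}$. That $\INV_d$ separates orbits then follows immediately from \Cref{prop:GenSep}, which states that a generating set of rational invariants is automatically separating.

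Finally, for minimality I would use a transcendence-degree argument. The field isomorphism $\C(\CdSkew)^{\OdC}\cong\C(\LL)^\Wd$ of \Cref{prop:NSec} preserves transcendence degree over $\C$; and because $\Wd$ is a finite group acting on the $d$-dimensional affine space $\LL$, one has $\operatorname{tr.deg}_\C\C(\LL)^\Wd=\dim_{\C}\LL=d$. Hence any generating set of $\C(\CdSkew)^{\OdC}$ has cardinality at least $d$, and since $|\INV_d|=d$ the set is of minimal size. The only real subtlety in the argument, and thus the step I would want to double-check, is the identification of the restrictions $f_i|_{\LL}$: one must verify that the indices in the subdiagonal structure of elements of $\LL$ really do kill all but one term of each sum in \Cref{prop:ComplexCS}, so that $\INV_d|_{\LL}$ coincides exactly with the set $\INV_\Wd$ for which generation and separation have already been proved.
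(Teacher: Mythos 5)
Your proposal is correct and follows essentially the same route as the paper's proof: restrict $\INV_d$ to the relative $\Wd$-section $\LL$, check that the restriction is exactly $\INV_\Wd$, invoke \Cref{prop:WInvGen} and \Cref{cor:RelSecGen} for generation, \Cref{prop:GenSep} for separation, and a transcendence-degree count of $d$ for minimality. The only (harmless) variation is in how that transcendence degree is obtained: the paper computes it on $\CdSkew$ via freeness of the $\OdC$-action (\Cref{cor:OFree}) and the count $\frac{d(d+1)}{2}-\frac{d(d-1)}{2}=d$, whereas you compute it on the section as $\dim\LL=d$ using finiteness of $\Wd$ and transport it through the isomorphism of \Cref{prop:NSec} — both are valid.
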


\begin{proof}
By Proposition \ref{prop:ComplexCS},  $\LL$ is a relative $\Wd$-section for the action of $\OdC$ on $\CdSkew$, and $\INV_d$ restricts to the set of invariants $\INV_\Wd$ in \eqref{eq:WInv} for the action of $\Wd$ on $\LL$. 
This means $\INV_\Wd=\sigma_{d-1}(\INV_d)$, where $\sigma_{d-1}$ is the isomorphism from the proof of Proposition \ref{prop:ComplexCS}. 
By Proposition \ref{prop:WInvGen}, the set $\INV_\Wd$ is a generating set for $\C(\LL)^\Wd$, and hence by Corollary \ref{cor:RelSecGen}, $\INV_d$ is a generating set for $\C(\CdSkew)^{\OdC}$. By Proposition \ref{prop:GenSep}, $\INV_d$ also separates orbits.

By Corollary \ref{cor:OFree}, the action of $\OdC$ is free on a non-empty, Zariski-open subset of $\CdSkew$. Thus the maximum dimension of an orbit on $\CdSkew$ is $\dim(\OdC)=\frac{d(d-1)}{2}$. By \cite[Corollary, Section 2.3]{PV1994} the transcendence degree\footnote{The transcendence degree of $\C(X)^G$ is given by the cardinality of the largest set $\{f_1,\hdots,f_n\}\in \C(X)^G$ such that there does not exist a rational function $F$ where $F(f_1,\hdots,f_n)\equiv 0\in \C(X)^G$.} 
of $\C(\CdSkew)^{\OdC}$ is $\frac{d(d+1)}{2} -  \frac{d(d-1)}{2} =d$, and hence any generating set must be at least of size $d$, implying that $\INV_d$ is minimal.
\end{proof}

The above results for the action of $\OdC$ on $\CdSkew$ help uncover the structure of the action of $\OdR$ on $\RdSkew$. First we show that the intersection of the set $\UC$ defined in \eqref{eq:UC} with $\RdSkew$ is a non-empty and well-defined Zariski open subset of $\RdSkew$.

\newcommand\LLR{L_d^{(d-1);\R}}
\newcommand\UR{{U_{d}(\R)}}
\begin{proposition}\label{prop:RealCS}
The set $\INV_d$ in \eqref{eq:Id} is a subset of $\R(\RdSkew)^{\OdR}$. In particular \nomenclature[UdR]{$\UR$}{The intersection of $\UC$ and $\RdSkew$, a Zariski open subset of $\RdSkew$}
\begin{align*}
  \UR := \UC \cap \left[\RdSkew\right],
\end{align*}
is a well-defined, $\OdR$-invariant, and non-empty Zariski open subset of $\RdSkew$, and\nomenclature[Ldd-1R]{$\LLR$}{The intersection of $\LL$ and $\RdSkew$}
\begin{align*}
  \LLR :=\LL \cap \left[\RdSkew\right] 
\end{align*}
intersects each orbit (under $\OdR$) contained in $\UR$. 
\end{proposition}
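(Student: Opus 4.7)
The plan is to establish the three assertions in sequence: first that each $f_i \in \INV_d$ has real coefficients (so restricts to a rational function on $\RdSkew$ invariant under $\OdR$), then that $\UR$ inherits openness, invariance, and non-emptiness from $\UC$, and finally that the real analogue of the proof of \Cref{prop:ComplexCS} produces, for every $(v,M) \in \UR$, an element $A \in \OdR$ with $A \cdot (v,M) \in \LLR$.

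\textbf{Step 1: Reality of the invariants.} I would proceed by induction on $i$, showing that $f_i \in \R(\RdSkew)^{\OdR}$. The key observation is that the polynomial equations cutting out $\OdC$ have real (in fact integer) coefficients, so the involution $c$ obtained by complex-conjugating the coefficients of polynomial representatives preserves the subfield $\C(\CdSkew)^{\OdC}$. The involution $c$ also fixes the real linear subspace $\LC{d}{i-1}$ setwise and fixes its real restriction $f_i|_{\LC{d}{i-1}}$ (a sum of squared real coordinate functions) pointwise. Because $f_i$ is the unique preimage of $f_i|_{\LC{d}{i-1}}$ under the restriction isomorphism $\sigma_{i-1}$ of \Cref{prop:NSec} (with $\sigma_0$ being the identity), the identity $c(f_i)|_{\LC{d}{i-1}} = f_i|_{\LC{d}{i-1}}$ forces $c(f_i) = f_i$, i.e., $f_i$ admits a representation as a quotient of real polynomials. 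Since $\OdR \subset \OdC$, this representation is $\OdR$-invariant on $\RdSkew$.

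\textbf{Step 2: Properties of $\UR$.} By Step 1 each $f_k$ restricts to a well-defined rational function on $\RdSkew$, so $\UR = \UC \cap \RdSkew$ is the locus in $\RdSkew$ where the $f_k$ are all defined and $\prod_k f_k$ does not vanish; this is Zariski-open in $\RdSkew$. Invariance under $\OdR$ follows from $\OdR \subset \OdC$ and the $\OdC$-invariance of $\UC$. For non-emptiness, I would exhibit the explicit point $(v,M) \in \LLR$ with $v = (0,\dots,0,1)^\top$ and $\m_{i(i+1)} = 1$ for $1 \le i \le d-1$ (other entries zero). The restriction formulas in \Cref{prop:ComplexCS} show that at this point $f_1 = 1$, $f_k|_{\LC{d}{k-1}} = k$ for $2 \le k < d$, and $f_d|_{\LL} = 1$, all nonzero, so the point lies in $\UR \cap \LLR$.

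\textbf{Step 3: Orbit intersection.} I would rerun the inductive argument of \Cref{prop:ComplexCS} with real orthogonal matrices throughout. The only step needing attention is the Givens-type construction of \Cref{prop:NonZeroNormRot}: given real $\vv_i, \vv_j$ with $\vv_i^2 + \vv_j^2 > 0$, one takes the \emph{real} square root $w = \sqrt{\vv_i^2 + \vv_j^2}$, producing an honest Givens rotation in $\SOdR \subset \OdR$. Crucially, in the real case the decomposition
\[
  (d-1)(\vv_1^2 + \dotsb + \vv_d^2) = \sum_{i=1}^{d} \sum_{j \ne i} (\vv_i^2 + \vv_j^2)
\]
shows that if $\vv_1^2 + \dotsb + \vv_d^2 > 0$ then some pair $\vv_i^2 + \vv_j^2 > 0$, so the Givens step applies and, unlike in the complex case, no pathological isotropic configurations arise. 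Iterating exactly as in \Cref{prop:ComplexCS}, with non-vanishing at each stage guaranteed by $(v,M) \in \UR$, produces $A \in \OdR$ with $A \cdot (v,M) \in \LL \cap \RdSkew = \LLR$.

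The main obstacle is Step 1, ensuring the complex-algebraic invariants actually descend to $\R$; once handled by the complex-conjugation/uniqueness argument above, Steps 2 and 3 are direct translations of the complex setup, made simpler by positivity of real sums of squares.
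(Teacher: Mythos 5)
Your proposal is correct and follows essentially the same route as the paper: reality of the $f_i$ via uniqueness of the preimage under the restriction isomorphism $\sigma_{i-1}$ together with reality of the restriction (your conjugation involution is just a repackaging of the paper's decomposition $f_i = h_1 + i\,h_2$ into real and imaginary invariant parts), and orbit intersection by rerunning the inductive Givens-rotation argument of \Cref{prop:ComplexCS} over $\R$, where positivity of real sums of squares removes the isotropic-vector issue. Your explicit witness for non-emptiness is a useful addition the paper leaves implicit, though at that point each $f_k$ evaluates to $1$ rather than $k$ (harmless, since only non-vanishing is needed).
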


\begin{proof}
In the proof of Proposition \ref{prop:ComplexCS}, each function $f_i$ is obtaining by taking the inverse image of a real invariant function under the field isomorphism $\sigma_i : \C(\CdSkew)^{\OdC} \rightarrow \C(\LC{d}i)^{\NC{d}{d-i}}$. The function $f_i$ can be decomposed $f_i = h_1 + I\cdot h_2$, where $h_1$ and $h_2$ are elements of $\R(\RdSkew)^{\OdR}$, and hence by Proposition \ref{prop:RealComplexInvFields}, are elements of $\C(\CdSkew)^{\OdC}$. Thus $h_1|_{\LC{d}i}= f_i|_{\LC{d}i}$. Since $\sigma_i$ is a field isomorphism, $f_i$ must define the same rational function as $h_1$, and hence is an element of $\R(\RdSkew)^{\OdR}$.

Note that Proposition \ref{prop:NonZeroNormRot} also holds for any $v\in\R^d$, i.e. by applying Gram-Schmidt to a linearly independent set of $d$ vectors $\{v,v_1,\hdots,v_{d-1}\}$ in $\R^d$. Thus if $f_1(v,M)\neq 0$, there exists a rotation $A\in \OdR$ such that $A\cdot (v,M)\in \LC{d}1\cap\RdSkew$. Similarly as in the proof of Proposition \ref{prop:ComplexCS} we can proceed by induction. Suppose $(v,M)\in \LC{d}i\cap\RdSkew$ and $f_{i+1}(v,M)\neq 0$. Then we have that

$$
\m_{1i}^2+\hdots+\m_{(i-1)i}^2\neq 0.
$$

By Proposition \ref{prop:Roti} we can find a rotation $A\in \NC{d}{d-i}$ such that $A\cdot (v,M)\in \LC{d}{i+1}$. Therefore if $(v,M)\in \UR$, there exists a rotation $A\in \OdR$ such that $A\cdot (v,M)\in \LL$.
\end{proof}

The following follows directly from Proposition \ref{cor:OFree}

\begin{corollary}\label{cor:OfreeR}
The action of $\OdR$ on $\UR \subset \RdSkew$ is free.
\end{corollary}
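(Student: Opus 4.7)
The plan is to transfer the freeness statement from the complex setting (\Cref{cor:OFree}) to the real setting by using the natural inclusions $\OdR \subset \OdC$ and $\UR \subset \UC$. The key observation is that freeness is a statement about stabilizers being trivial, and stabilizers behave well under subgroup restriction.

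More concretely, I would first fix an arbitrary point $(v,M) \in \UR$. Since $\UR = \UC \cap \RdSkew$ by the definition given in \Cref{prop:RealCS}, we automatically have $(v,M) \in \UC$. Then I would consider the stabilizer subgroup
\[
\OdR_{(v,M)} = \{A \in \OdR : A\cdot(v,M) = (v,M)\}.
\]
Because $\OdR$ embeds as a (real) subgroup of $\OdC$, and the action formula \eqref{eq:OdRActionLie2} is the restriction to $\OdR$ of the same formula on $\OdC$, we have the containment
\[
\OdR_{(v,M)} \subset \OdC_{(v,M)}.
\]

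By \Cref{cor:OFree}, the action of $\OdC$ on $\UC$ is free, so $\OdC_{(v,M)} = \{\id\}$. Consequently $\OdR_{(v,M)} = \{\id\}$ as well, which is exactly the definition (\Cref{def:free}) of freeness of the $\OdR$-action on $\UR$.

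There is no real obstacle here: all the substantive work has already been carried out in establishing \Cref{prop:WFree}, \Cref{cor:OFree}, and \Cref{prop:RealCS} (which ensures $\UR$ is genuinely nonempty, Zariski-open, and $\OdR$-invariant, so that the statement is not vacuous). The proof is essentially a one-line consequence of the inclusion of stabilizers.
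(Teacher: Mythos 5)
Your proof is correct and is essentially the paper's own argument spelled out in detail: the paper simply states that the result ``follows directly from'' \Cref{cor:OFree}, and your stabilizer-containment reasoning (using $\OdR \subset \OdC$ and $\UR \subset \UC$) is precisely the content of that implicit inference.
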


\begin{proposition}\label{prop:IDrealSep}
The set $\INV_d$ generates the invariant function field $\R(\RdSkew)^{\OdR}$ and separates orbits on $\UR$.
\end{proposition}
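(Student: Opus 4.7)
The plan is to establish the generation and separation claims separately, leveraging the complex results already proven. For the generation claim, I would combine \Cref{cor:RealGenComplex} with the preceding corollary stating that $\INV_d$ generates $\C(\CdSkew)^{\OdC}$. Since \Cref{prop:RealCS} already places $\INV_d\subset\R(\RdSkew)^{\OdR}$, the hypothesis of \Cref{cor:RealGenComplex} is satisfied and generation of the real invariant field follows immediately.

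For the separation claim on $\UR$, my strategy is to transfer the argument used in \Cref{prop:WInvGen} to the real setting via the real relative section $\LLR$. Given $p_1,p_2\in\UR$ with $f(p_1)=f(p_2)$ for every $f\in\INV_d$, I would first invoke \Cref{prop:RealCS} to find $A_1,A_2\in\OdR$ such that $q_j\coloneq A_j\cdot p_j\in\LLR$. By $\OdR$-invariance of each $f_i$ together with the $\OdR$-invariance of $\UR$, we have $q_1,q_2\in\LLR\cap\UR$ with $f_i(q_1)=f_i(q_2)$ for all $i$.

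Next, I would compute (by iterating the formulas of \Cref{prop:ComplexCS}, using that $\LL\subset\LC{d}{i-1}$ for every $i$) that on $\LL$ the restricted set $\{f_1|_{\LL},\dotsc,f_d|_{\LL}\}$ becomes precisely $\{\vv_d^2,\m_{(d-1)d}^2,\dotsc,\m_{12}^2\}=\INV_\Wd$, since each sum of squares in the formula for $f_i|_{\LC{d}{i-1}}$ collapses to its single surviving term once the additional vanishing conditions cutting out $\LL$ are imposed. Therefore $q_1$ and $q_2$ agree on every squared coordinate, and the sign-flipping bookkeeping already carried out in the proof of \Cref{prop:WInvGen} (via \eqref{eq:Waction}) produces some $D\in\Wd$ with $D\cdot q_1=q_2$. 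Since $\Wd\subset\OdR$, we conclude $p_2=(A_2^{-1}DA_1)\cdot p_1$ with $A_2^{-1}DA_1\in\OdR$, so $p_1$ and $p_2$ lie in the same $\OdR$-orbit on $\UR$.

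The main subtlety I foresee is verifying that the restrictions $f_i|_{\LL}$ genuinely collapse to the single-coordinate squares $\m_{(d-i+1)(d-i+2)}^2$ rather than remaining as the sums-of-squares presented on $\LC{d}{i-1}$; this requires carefully propagating the nested vanishing conditions defining $\LL$ through the isomorphism chain $\varsigma_i$. Once this identification is in hand, the remainder of the argument reduces to the explicit $\Wd$-action analysis already performed, and an alternative route via \Cref{prop:SepGenReal} is also available since one then knows the full field $\R(\RdSkew)^{\OdR}$ separates orbits on $\UR$.
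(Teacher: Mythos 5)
Your proposal is correct and follows essentially the same route as the paper: generation is obtained by combining the complex generation result with \Cref{prop:RealCS} and \Cref{cor:RealGenComplex}, and separation is obtained by moving both points into $\LLR\cap\UR$ via \Cref{prop:RealCS} and then running the $\Wd$-separation argument of \Cref{prop:WInvGen} on the restrictions $f_i|_{\LL}=\m_{(d-i+1)(d-i+2)}^2$. The ``subtlety'' you flag about the sums of squares collapsing on $\LL$ is exactly the observation the paper uses (and already invoked in the corollary following \Cref{prop:WInvGen}), so nothing further is needed.
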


\begin{proof}
The fact that $\INV_d$ generates $\R(\RdSkew)^{\OdR}$ follows from Propositions \ref{prop:ComplexCS}, \ref{prop:RealCS} and Corollary \ref{cor:RealGenComplex}.
Using a similar argument as in Proposition \ref{prop:WInvGen}, we can see that $\INV_\Wd$ in \eqref{eq:WInv} separates orbits for the action of $\Wd$ on $\LLR\cap \UR$.
By Proposition \ref{prop:RealCS}, any orbit on $\UR$ meets $\LLR$, and the $\INV_d$ restrict to $\INV_\Wd$ on $\LLR$.
Thus $\INV_d$ is separating on $\UR$.
\end{proof}

We finish the section by constructing an explicit set of invariant polynomial functions that generate $\C(\CdSkew)^{\OdC}$. Consider the map \nomenclature[phik]{$\phi_k$}{The map $\phi_k \colon \CdSkew \rightarrow \C^d,\,(v,M) \mapsto M^kv$}
\begin{align*}
  \phi_k \colon \CdSkew &\rightarrow \C^d \\
  (v,M) &\mapsto M^kv.
\end{align*}
Then for the action of $A\cdot (v,M)$ we have that

$$
\phi_k\big(A\cdot (v,M)\big) = \phi_k\left( (Av, AMA^T) \right)=(AMA^T)^kAv=AM^kv=A\,\phi_k\big((v,M)\big).
$$

Thus the polynomial obtained by the dot-product of $\phi_k$ with itself is an invariant function on $ \CdSkew$ under $\OdC$.
We will show that the set of polynomial invariants
(defining $\DOT{a}{b} := \sum_i a_i b_i$)

\nomenclature[IM]{$\IM$}{The set of polynomial invariants generating $\C(\CdSkew)^{\OdC}$ given by $\DOT{\phi_k}{\phi_k}$, $1\leq k<d$}
\begin{equation}\label{eq:IM}
  \IM = \left\{ \DOT{v}{v},\, \DOT{M^kv}{M^kv}, |\, 1\leq k < d \right\}
\end{equation}
generate the field $\C(\CdSkew)^{\OdC}$ by restricting them to $\LL$.

\begin{lemma}\label{lem:vMform}
Consider a matrix $M$ of the form as in \eqref{eq:vMform}, i.e.\ such that $(v,M)\in \LL$.
Then for $1\leq k< d$, $M^k$ satisfies

\begin{itemize}
\item[(a)] $\displaystyle M^k(d-k,d) = \prod_{i=1}^k \m_{(d-i)(d-i+1)}$,
\item[(b)]$M^k(i,d) = 0$ for $i < d-k$,
\item[(c)]$M^k(i,d) \in \Q[\m_{(d-j)(d-j+1)}\, |\, 1\leq j < k]$  for $i > d-k$.
\end{itemize}
\end{lemma}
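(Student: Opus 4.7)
My plan is to exploit the tridiagonal structure of $M$: since the only non-zero entries are $M(a,a{+}1)=\m_{a(a+1)}$ and $M(a{+}1,a)=-\m_{a(a+1)}$ for $1\le a\le d-1$, the entry $M^k(i,d)$ admits the combinatorial expansion
\begin{equation*}
    M^k(i,d)=\sum_{(j_0,\dotsc,j_k)}\,\prod_{t=1}^{k} M(j_{t-1},j_t),
\end{equation*}
where the sum runs over all walks on the path graph on $\{1,\dotsc,d\}$ with $j_0=i$, $j_k=d$ and $|j_t-j_{t-1}|=1$ at each step. I will read off the three claims from this walk interpretation.

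Claim (b) is then immediate: a walk of length $k$ has net displacement at most $k$, so when $i<d-k$ no walks exist and $M^k(i,d)=0$. For claim (a), when $i=d-k$ the net displacement equals the walk length, forcing every step to be $+1$; the unique walk $d-k,\,d-k+1,\,\dotsc,\,d$ contributes the product $\prod_{i=1}^k\m_{(d-i)(d-i+1)}$.

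For claim (c), I would show that whenever $i>d-k$, any walk from $i$ to $d$ of length $k$ stays strictly above vertex $d-k$. Writing $U,D$ for the numbers of up and down steps, the conditions $U+D=k$ and $U-D=d-i$ give $D=(k+i-d)/2$, so the leftmost vertex $L$ visited satisfies $L\ge i-D=(i+d-k)/2>d-k$. Since $L$ is an integer, this forces $L\ge d-k+1$, so the walk only traverses edges of the form $(j,j+1)$ with $d-k+1\le j\le d-1$; each such edge weight is $\pm\m_{(d-j')(d-j'+1)}$ with $j'=d-j\in\{1,\dotsc,k-1\}$, giving the asserted polynomial membership in $\Q[\m_{(d-j)(d-j+1)}\,|\,1\le j<k]$.

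The main obstacle is purely boundary book-keeping: I must verify carefully that the integer strict inequality $L\ge d-k+1$ really follows from the hypothesis $i>d-k$, and that the resulting edge indices $j'$ do fall in the prescribed range $1\le j'<k$ rather than merely $\le k$. An alternative presentation would be a triple induction on $k$ using the recursion $M^{k+1}(i,d)=-\m_{(i-1)i}M^k(i-1,d)+\m_{i(i+1)}M^k(i+1,d)$ (with the natural boundary adjustments for $i\in\{1,d\}$), in which the same inequalities arise as the only non-trivial step.
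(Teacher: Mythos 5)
Your proposal is correct. The walk expansion of $M^k(i,d)$ over the path graph is valid because $M$ is tridiagonal with zero diagonal, and each of your three deductions checks out: (b) from the displacement bound, (a) from the uniqueness of the monotone walk, and (c) from the leftmost-vertex estimate $L\ge i-D=(i+d-k)/2>d-k$, which together with integrality forces $L\ge d-k+1$ and hence restricts the edges used to $(j,j+1)$ with $j\ge d-k+1$, i.e.\ to the variables $\m_{(d-j')(d-j'+1)}$ with $1\le j'<k$. The paper instead argues by induction on $k$ using the recursion $M^k(i,d)=-\m_{(i-1)i}M^{k-1}(i-1,d)+\m_{i(i+1)}M^{k-1}(i+1,d)$ (the alternative you mention at the end), carrying all three claims through the inductive step simultaneously. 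Your closed-form expansion buys a cleaner treatment of the boundary case $i=d-k+1$: in the inductive argument the coefficient $\m_{(i-1)i}=\m_{(d-k)(d-k+1)}$ falls outside the allowed index range and one must observe separately that it multiplies $M^{k-1}(d-k,d)=0$; in your version this never arises because the leftmost-vertex bound already excludes the offending edge. The price is the small amount of parity and integrality book-keeping you flag, all of which goes through.
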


\begin{proof}
We proceed by induction. For $k=1$, $M^1=M$. Then $M^1$ satisfies (a)-(c), since $M(d-1,d)=\m_{(d-1)(d)}$ and $M(i,d)=0$ for $i<d-1$. Now suppose that (a)-(c) hold for $M^{k-1}$. We have that for $M^k=MM^{k-1}$,

\begin{align*}
M^k(1,d) &= \m_{12}M^{k-1}(2,d)\\
M^k(i,d) &= -\m_{i-1,i}M^{k-1}(i-1,d)+\m_{i,i+1}M^{k-1}(i+1,d)\\
M^k(d,d) &= -\m_{(d-1)d}M^{k-1}(d-1,d),
\end{align*}
where $1<i<d-1$. Note that $M^k(i,d)$ is linear combination of $M^{k-1}(i-1,d)$ and $M^{k-1}(i+1,d)$. By the induction hypothesis we know that $M^{k-1}(i,d)=0$ if $i < d-k+1$, and hence $M^k (i,d)=0$ when $i+1 < d-k+1$, or equivalently when $i<d-k$. This proves (b).

Suppose that $i > d-k$. Then $M^k(i,d)$ is linear in the terms

$$
\m_{i-1,i}, \quad \m_{i,i+1}, \quad M^{k-1}(i-1,d), \quad M^{k-1}(i+1,d),
$$
where $\m_{i-1,i}$ and $\m_{i,i+1}$ are of the form $\m_{(d-j)(d-j+1)}$ for $1\leq j< k$. By the induction hypothesis, the latter two terms are polynomials in $\m_{(d-j)(d-j+1)}$ where $1 \leq j < k-1$, proving (c).

Finally suppose that $i=d-k$. We have that

$$
M^k(d-k,d)= -\m_{d-k-1,d-k}M^{k-1}(d-k-1,d)+\m_{d-k,d-k+1}M^{k-1}(d-k+1,d).
$$

By the induction hypothesis we know that

$$
M^{k-1}(d-k+1,d)=\prod_{i=1}^{k-1} \m_{(d-i)(d-i+1)}\quad \text{and} \quad M^{k-1}(d-k-1,d)=0,
$$
which proves (a).
\end{proof}

\begin{lemma}\label{lem:IMgen}
  The polynomials obtained from restricting the functions in $\IM$ to $\LL$ generate the invariant rational function field $\C(\LL)^\Wd$.
\end{lemma}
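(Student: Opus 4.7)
The plan is to show that the restrictions $\IM|_\LL$ generate the same subfield of $\C(\LL)$ as $\INV_\Wd$ from \eqref{eq:WInv}, and then invoke Proposition \ref{prop:WInvGen}. To simplify notation, write $q := c_d^2$ and $p_j := \m_{(d-j)(d-j+1)}^2$ for $1\le j \le d-1$, so that $\INV_\Wd = \{q, p_1, \ldots, p_{d-1}\}$. The first step is a direct computation: on $\LL$ the vector $v$ has only its last entry nonzero, hence $(M^k v)_i = M^k(i,d)\, c_d$, giving
\[
\langle v, v\rangle|_\LL = q, \qquad \langle M^k v, M^k v\rangle|_\LL = q \sum_i M^k(i,d)^2.
\]
Lemma \ref{lem:vMform} then controls the summands: the term at $i = d-k$ equals $\prod_{j=1}^k p_j$, terms with $i < d-k$ vanish, and the remaining terms at $i > d-k$ only involve $\m_{(d-j)(d-j+1)}$ with $j < k$.

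The step I expect to be the main obstacle is upgrading the last observation to show that each $M^k(i,d)^2$ is polynomial in the $p_j$'s, not merely in the $\m_{j,j+1}$'s. I plan to handle this via a parity argument based on the action \eqref{eq:Waction}: for $D = \operatorname{diag}(w_1,\dotsc,w_d) \in \Wd$ one has $(DMD)^k(i,d) = w_i w_d\, M^k(i,d)$, so every monomial $\prod_j \m_{j,j+1}^{\alpha_j}$ appearing in $M^k(i,d)$ must transform by the same sign $w_i w_d$. Since the overall sign under $D$ is determined mod~$2$ by the exponent vector $(\alpha_j)$, all such monomials share the same parity pattern, so any product of two of them has even exponent in every $\m_{j,j+1}$. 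This forces $M^k(i,d)^2$ to be polynomial in the $p_j$'s.

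Combining these steps, for each $1\le k < d$ one arrives at the triangular expression
\[
\langle M^k v, M^k v\rangle|_\LL = q\Bigl( \prod_{j=1}^k p_j + R_k(p_1,\dotsc,p_{k-1}) \Bigr),
\]
with $R_k \in \C[p_1,\dotsc,p_{k-1}]$. Starting from $q = \langle v,v\rangle|_\LL$ and inductively solving this triangular system yields $p_1, p_2, \dotsc, p_{d-1}$ as rational functions of $\IM|_\LL$ (at each step one divides by $q\prod_{j<k} p_j$, which is a nonzero rational function on $\LL$). Thus $\INV_\Wd \subset \C(\IM|_\LL)$, and Proposition \ref{prop:WInvGen} finishes the argument by giving $\C(\LL)^\Wd = \C(\IM|_\LL)$.
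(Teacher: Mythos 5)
Your proof is correct and follows the same high-level skeleton as the paper's: the identities $\langle v,v\rangle|_\LL = \vv_d^2$, $\langle M^kv,M^kv\rangle|_\LL = \vv_d^2\sum_i M^k(i,d)^2$, Lemma~\ref{lem:vMform} to isolate the leading term $\prod_{j\le k}p_j$, and a triangular induction ending with Proposition~\ref{prop:WInvGen}. Where you diverge is in how the remainder $R_k=\sum_{i>d-k}M^k(i,d)^2$ is shown to be expressible: the paper writes $\langle M^kv,M^kv\rangle|_\LL = \vv_d^2\bigl(\prod_j p_j + I\bigr)$, observes that $I$ is a ratio of $\Wd$-invariants and therefore lies in $\C(\LL)^\Wd$, and then invokes Proposition~\ref{prop:WInvGen} together with the inductive hypothesis to conclude $I$ is rational in the earlier $p_j$'s. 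Your parity argument replaces this with a direct verification: because $(DMD)^k(i,d)=w_iw_d\,M^k(i,d)$ for $D\in\Wd$, every monomial appearing in $M^k(i,d)$ has the same exponent vector mod~$2$, so every cross term in $M^k(i,d)^2$ has even exponents and $M^k(i,d)^2$ is honestly a \emph{polynomial} in the $p_j$'s (with $j<k$ when $i>d-k$). This is a tighter handling of the step: the paper's appeal to $\Wd$-invariance gives only that $I$ is rational in \emph{all} of $\INV_\Wd$, and one then has to argue (implicitly, via Lemma~\ref{lem:vMform}(c)) that only the first $k-1$ of them appear; your argument bypasses that subtlety entirely and produces a polynomial rather than merely rational dependence. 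Both routes are valid, but yours is more explicit and, in effect, patches a small gap the paper elides.
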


\begin{proof}
First note that to restrict the polynomials in $\IM$ to $\LL$, we can assume that $(v,M)$ are of the form in 
\eqref{eq:vMform} and then compute the inner product. Then we can easily see that 
$$
\DOT{v}{v}|_{\LL} = \vv_d^2\quad \text{and}\quad \DOT{Mv}{Mv}|_{\LL} = \vv_d^2\m_{(d-1)d}^2.
$$
This implies that $\vv_d^2$ and $\m_{(d-1)d}^2$ are rational functions of $\DOT{v}{v}|_{\LL}$ and $\DOT{ Mv}{Mv}|_{\LL}$. We proceed by induction on $i$: suppose that $\m_{(d-i)(d-i+1)}^2$ is a rational function of $\DOT{ v}{v}|_\LL, \DOT{ Mv}{Mv}|_\LL,\hdots,\DOT{ M^iv}{M^iv}|_\LL$ for all $1\leq i < k$. By Lemma \ref{lem:vMform}, we know that
$$
\DOT{M^kv}{M^kv}|_\LL = \vv_d^2\prod_{i=1}^k \m_{(d-i)(d-i+1)}^2 + \vv_d^2 I\left(\m_{(d-1)d},\m_{(d-2)(d-1)},\hdots,\m_{(d-k+1)(d-k+2)}\right).
$$

Since $\DOT{ M^k}{ M^kv}|_\LL$ is an invariant function, as well as $\vv_d^2$ and $\m_{(d-i)(d-i+1)}^2$ for $1\leq i < d$, the function $I$ lies in $\C(\Wd)^\LL$. By the induction hypothesis and Proposition \ref{prop:WInvGen}, $I$ is a rational function of
$$
\DOT{ v}{v}|_\LL,\, \DOT{ Mv}{Mv}|_\LL,\,\hdots\, ,\DOT{ M^{k-1}v}{M^{k-1}v}|_\LL.
$$
Thus we can rewrite the above equality to
$$
\frac{\DOT{ M^kv}{M^kv} - \vv_d^2I\left(\DOT{ v}{v}|_\LL, \DOT{ Mv}{Mv}|_\LL,\hdots,\DOT{ M^{k-1}v}{M^{k-1}v}|_\LL\right)}{\displaystyle \vv_d^2\prod_{i=1}^{k-1} \m_{(d-i)(d-i+1)}^2} = \m_{(d-k)(d-k+1)}^2.
$$
By the induction hypothesis each $\m_{(d-i)(d-i+1)}^2$ for $1\leq i<k$ is a rational function of
$$
\DOT{ v}{v}|_\LL,\, \DOT{ Mv}{Mv}|_\LL,\,\hdots\, ,\DOT{ M^{k-1}v}{M^{k-1}v}|_\LL.
$$
This implies that $\m_{(d-k)(d-k+1)}^2$ is a rational function of
$$
\DOT{ v}{v}|_\LL,\, \DOT{ Mv}{Mv}|_\LL,\,\hdots\, ,\DOT{ M^{k}v}{M^{k}v}|_\LL.
$$
Therefore each element of $\INV_\Wd$ can be written as a rational function of polynomials in $\IM$ restricted to $\LL$. By Proposition \ref{prop:WInvGen}, $\IM$ restricted to $\LL$ is a generating set for $\C(\LL)^\Wd$. 
\end{proof}

\begin{proposition}
  \label{prop:invariantFields}
The set of polynomial invariants $\IM$ in $\eqref{eq:IM}$ generates both $\C(\CdSkew)^{\OdC}$ and $\R(\RdSkew)^{\OdR}$ and also separates orbits on $\CdSkew$ and $\RdSkew$.
\end{proposition}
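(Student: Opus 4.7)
The plan is to assemble the pieces already in place: first show that $\IM$ lives in the invariant field, then use the relative section $\LL$ together with \Cref{lem:IMgen} to promote it to a generating set of $\C(\CdSkew)^{\OdC}$, then deduce separation, and finally transport everything to the real setting with the machinery of \Cref{cor:RealGenComplex} and \Cref{prop:SepGenReal}. None of the steps looks like an obstacle in its own right; the only work is to invoke each earlier result in the correct order.

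First I would check that $\IM \subset \C(\CdSkew)^{\OdC}$ (and in fact $\IM \subset \R(\RdSkew)^{\OdR}$, since every polynomial in $\IM$ has real, even integer, coefficients). The key observation is the equivariance of $\phi_k$: for $A\in\OdC$,
\[
    \phi_k(A\cdot(v,M))=(AMA^\top)^k Av = A M^k v = A\,\phi_k(v,M),
\]
using $A^\top A = \id$ repeatedly to collapse the interior $A^\top A$ factors. Since $A$ preserves the bilinear form $\DOT{\cdot}{\cdot}$ (again because $A^\top A = \id$), the polynomials $\DOT{M^k v}{M^k v}$ and $\DOT{v}{v}$ are invariant.

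Next I would combine \Cref{lem:IMgen}, which shows the restrictions $\IM\big|_{\LL}$ generate $\C(\LL)^\Wd$, with \Cref{cor:RelSecGen} applied to the relative $\Wd$-section $\LL$ provided by \Cref{prop:ComplexCS}. This immediately yields that $\IM$ generates $\C(\CdSkew)^{\OdC}$. By \Cref{prop:GenSep} any generating set of rational invariants is also separating, so $\IM$ separates orbits on a non-empty Zariski-open subset of $\CdSkew$.

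Finally, to handle the real statements, I would invoke \Cref{cor:RealGenComplex}: since $\IM \subset \R(\RdSkew)^{\OdR}$ and generates $\C(\CdSkew)^{\OdC}$ over $\C$, it already generates $\R(\RdSkew)^{\OdR}$ over $\R$. For orbit separation on the real side, \Cref{prop:IDrealSep} tells us that $\R(\RdSkew)^{\OdR}$ separates orbits on the Zariski-open set $\UR$ (via the particular generating set $\INV_d$), so \Cref{prop:SepGenReal} transfers this separating property to our generating set $\IM$, completing the proof.
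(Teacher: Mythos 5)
Your proposal is correct and follows essentially the same route as the paper's own proof: the generation claim over $\C$ via \Cref{prop:ComplexCS}, \Cref{lem:IMgen} and \Cref{cor:RelSecGen}, separation via \Cref{prop:GenSep}, and the transfer to the real setting via \Cref{cor:RealGenComplex}, \Cref{prop:IDrealSep} and \Cref{prop:SepGenReal}. The only addition is your explicit equivariance check for $\phi_k$, which the paper carries out in the text immediately preceding the proposition rather than inside the proof.
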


\begin{proof}
By Proposition \ref{prop:ComplexCS} $\LL$ is a relative $\Wd$-section for the action of $\OdC$ on $\CdSkew$, and by Proposition \ref{prop:WInvGen} $\INV_\Wd$ is a generating set for $\C(\LL)^\Wd$. Thus by Lemma \ref{lem:IMgen} and Corollary \ref{cor:RelSecGen}, $\IM$ generates $\C(\CdSkew)^{\OdC}$. By Corollary \ref{cor:RealGenComplex} $\IM$ generates $\R(\RdSkew)^{\OdR}$.

By Proposition \ref{prop:GenSep} $\IM$ separates orbits on $\CdSkew$. By Proposition \ref{prop:IDrealSep} there exists a separating set of invariants in $\R(\RdSkew)^{\OdR}$, and hence $\R(\RdSkew)^{\OdR}$ separates orbits. Therefore, by Proposition \ref{prop:SepGenReal}, $\IM$ separates orbits on $\RdSkew$.
\end{proof}

\begin{remark}
 As a consequence, we in particular have that all $\DOT{M^k v}{M^kv}$ for $k\geq d$ can be expressed as rational functions with variables in $\INV_M$.
\end{remark}

\begin{example}\label{ex:generatorsO3field}
  By \Cref{prop:invariantFields} the field of invariants $\R(\R^3 \times \Skew3{\R})^{\O_3(\R)}$
  is generated by
  \begin{align*}
  \DOT{ v}{ v } &= \vv_1^2 + \vv_2^2 + \vv_3^2 \\
\DOT{ Mv}{ Mv } &= (\m_{12} \vv_1 - \m_{23} v_3)^2 + (\m_{13} \vv_1 + \m_{23} \vv_2)^2 + (\m_{12} v_2 + \m_{13} \vv_3)^2 \\
    \DOT{ M^2v}{ M^2v } &= 
    {\left({\m_{12}} {\m_{23}} {\vv_{1}} - {\m_{12}} {\m_{13}} {\vv_{2}} - {\left({\m_{13}}^{2} + {\m_{23}}^{2}\right)} {\vv_{3}}\right)}^{2} \\
    &\qquad+ {\left({\m_{13}} {\m_{23}} {\vv_{1}} + {\m_{12}} {\m_{13}} {\vv_{3}} + {\left({\m_{12}}^{2} + {\m_{23}}^{2}\right)} {\vv_{2}}\right)}^{2}\\
                               &\qquad + {\left({\m_{13}} {\m_{23}} {\vv_{2}} - {\m_{12}} {\m_{23}} {\vv_{3}} + {\left({\m_{12}}^{2} + {\m_{13}}^{2}\right)} {\vv_{1}}\right)}^{2}.
  \end{align*}
\end{example}

\section{\texorpdfstring{$\OdR$}{O(d, R)}-invariant iterated-integral signature}\label{sec:MFsection}

\subsection{Moving frame on \texorpdfstring{$\lie{n}2$}{g2((Rd))}.}\label{sec:MF2}

In this section, we construct a moving frame map for the action of $\O_2(\R)$ on $\lie{n}2$, and
show how this can be used to construct $\O_2(\R)$-invariants in $\lie{n}2$ and hence in the coefficients of the iterated-integralas signature of a curve $\CURVE$.

First consider the action on $\lie{2}2 = \R^2\oplus [\R^2,\R^2]$. We can denote any element of $\lie{2}2$ as $\cctwo$ with coordinates $c_{1}, c_{2}, $ and $c_{12}$. Through the isomorphism in \eqref{eq:lie2isom} we can consider $\cctwo$ as an element of $\R^2\oplus\mathfrak{so}(2,\R)$,

\begin{align*}
  \cctwo = (v,M) = 
  \left(\begin{bmatrix}
      c_{1} \\
      c_{2}
  \end{bmatrix},
  \begin{bmatrix}
    0 & c_{12} \\
    -c_{12} & 0
  \end{bmatrix}\right),
\end{align*}
and with action as in \eqref{eq:simpleForm}.
We will now show that $\O_2(\R)$ is free on $\lie{2}2$ and the following submanifold
$$
\K_{2,\le 2} := \left\{\cctwo \in \lie{2}2\, |\, c_{1} = 0; c_{2}, c_{12} >0\right\}
$$
\nomenclature[K22]{$\K_{2,\le 2}$}{The cross-section for the action of $\O_2(\R)$ on $\mathcal U_{2;\le 2}$}
is a cross-section for the action. Similarly to \Cref{ex:SO2mf}, we start by defining the group element

\begin{equation*}
  A(\cctwo)
  :=
  \frac{1}{\sqrt{c_{1}^2+c_{2}^2}}\begin{bmatrix}
    c_{2} & -c_{1} \\
    c_{1} &  c_{2}
  \end{bmatrix},
\end{equation*}
which is defined outside of $\{c_{1}=c_{2}=0\}$. For any such element $\cctwo \in \lie{2}2$, we have that
$$
A(\cctwo) \cdot \cctwo = \left(\begin{bmatrix}
    0 \\
    \sqrt{c_{1}^2+c_{2}^2}
  \end{bmatrix},
  \begin{bmatrix}
    0 & c_{12} \\
    -c_{12} & 0
  \end{bmatrix}\right).
$$

Unlike in \Cref{ex:SO2mf}, the action is not free on $\R^2$, the submanifold defined by $c_{1}=0,
c_{2} >0$ is not a cross-section, and $A(\cctwo)$ does not define a moving frame map. This is due to the
fact that a reflection about the $y$-axis will fix $v$, but change the sign of $M$. Thus to define a
moving frame map we must consider the diagonal action of $\O_2(\R)$ on all of $\lie{2}2$, not just
the action on $\lie{1}2=\R^2$. The map $\rhotwo: \mathcal U_{2;\le 2} \rightarrow \O_2(\R)$ given by
$$
\rhotwo(\cctwo) = \frac{1}{\sqrt{c_{1}^2+c_{2}^2}}\begin{bmatrix}
  \text{sgn}(c_{12})c_{2} & -\text{sgn}(c_{12})c_{1}\\
  c_{1} & c_{2}
  \end{bmatrix}
$$
\nomenclature[rho2]{$\rhotwo$}{The moving frame map $\rhotwo:\,\mathcal U_{2;\le 2} \rightarrow \O_2(\R)$ for the action of $\O_2(\R)$ on $\mathcal U_{2;\le 2}$}
defines the group element $\rhotwo(\cctwo)$ such that $\rhotwo(\cctwo)\cdot \cctwo \in \mathcal{K}$ where
$$
\mathcal U_{2;\le 2} = \left\{ \cctwo=(v,M) \,|\, v\neq \begin{bmatrix} 0\\0\end{bmatrix}, c_{12}\neq 0\right\} \subset \lie{2}2.
$$
\nomenclature[U22]{$\mathcal U_{2;\le 2}$}{The domain of the moving frame $\rhotwo$, a Zariski-open subset of $\lie{2}2$}

Note that \(\K_{2;\le 2}\) 
is a subset of $L^{(1);\R}_2$
and 
and \(\mathcal U_{2;\le 2}\) is equal to \(\UR\), both defined in \Cref{prop:RealCS}.
The (unique) intersection point of the orbit $\O_2(\R)\cdot \cctwo$ with $\K_{2; \le 2}$ is given by \(\rhotwo(\cctwo)\cdot\cctwo\).
Since the action is free on $\lie{2}2$ (\Cref{cor:OfreeR}), the map $\rhotwo$ defines a moving frame with cross-section $\mathcal{K}$. This immediately implies that the coordinates of $\rhotwo(\cctwo)\cdot \cctwo$ are invariants for the action of $\O_2(\R)$ on $\lie{2}2$\footnote{The constant functions are referred to as the \textit{phantom invariants}.}:
$$
\sqrt{c_{1}^2+c_{2}^2}, \quad |c_{12}|.
$$
Furthermore any two elements $\cctwo, \tilde{\cc}_{\leq 2} \in \lie{2}2$ are related by an element of $\O_2(\R)$ if and only if
$$
\sqrt{c_{1}^2+c_{2}^2} = \sqrt{\tilde{c}_{1}^2+\tilde{c}_{2}^2} \quad \text{and}\quad |c_{12}|=|\tilde{c}_{12}|.
$$

For any path $\CURVE$ in $\R^2$, let $\cctwo(\CURVE)$ denote the element of $\lie{2}2$ given by $\proj_{\le 2}(\log(\IIS(\CURVE)))$. Then we can define the ``invariantized'' path $Y:=\rhotwo(\cctwo(\CURVE)) \cdot \CURVE$. The above statement implies that for any two paths $\CURVE, {\CURVE}'$, we have that $\cctwo(Y) =
\cctwo({Y}')$ if and only if there exists some $g\in\O_2(\R)$ such that 
\[ g\cdot\cctwo(\CURVE)=\cctwo(g\cdot \CURVE)=\cctwo({\CURVE}').\]
In particular, since the $\log$ map is an equivariant bijection, the same holds true for the $\IIS$ of a path under the projection $\proj_{\leq 2}$.

Given a path $\CURVE$ starting at the origin, the values of $c_{1}(\CURVE), c_{2}(\CURVE)$ correspond to $x$ and $y$ values of $\CURVE(1)$.
Similarly the value of $c_{12}(\CURVE)$ corresponds to the so-called Lévy area traced by \(\CURVE\)
(see \cite[Section 3.2]{DR2019} in the context of classical invariant theory).
Thus the moving frame map applied to such a path $\CURVE$, rotates the end point $\CURVE(1)$ to the $y$-axis (and reflects about the $y$-axis if the Lévy area is negative). 

The resulting invariants on $\lie{2}2$ are perhaps unsurprising, but the above method also yields
$\O_2(\R)$-invariants on $\lie{n}2$ for an arbitrary truncation order $n$, as we now show.

We define a map $\rhotwon:\mathcal U_{2;\le n}\subset\lie{n}2 \rightarrow \O_2(\R)$\nomenclature[rho2]{$\rhotwon$}{The moving frame map for the action of $\O_2(\R)$ on $\mathcal U_{2;\le n}$, where $\rhotwon(\ccn)=\rhotwo(\proj_{\le 2}\ccn)$} by
$$
\rhotwon(\ccn) = \frac{1}{\sqrt{c_{1}^2+c_{2}^2}}\begin{bmatrix}
  \text{sgn}(c_{12})c_{2} & - \text{sgn}(c_{12})c_{1}\\
  c_{1} & c_{2}
  \end{bmatrix}
$$
for any $\ccn\in {\mathcal U}_{2;\le n}$ where 
$$
{\mathcal U}_{2;\le n} := \proj_{\le n \to \le 2}^{-1}\left( \mathcal U_{2;\le 2} \right) \subset \lie{n}2,
$$
with $\proj_{\le n\to \le 2}$ denoting the canonical projection from $\lie{n}2$ onto $\lie{2}2$.

Since $\O_2(\R)$ acts
diagonally on the whole of \(\lie{n}2\), $\rhotwon$ is a moving frame map on $\lie{n}2$ with cross-section ${\mathcal{K}}_{2;\le n}$ where 
$$
{\mathcal{K}}_{2;\le n} := \proj_{\le n \to \le 2}^{-1}\left( \K_{2;\le 2} \right) \subset \lie{n}2.
$$

Then the resulting coordinate functions of $\rhotwon(\mathbf \ccn)\cdot\ccn\in\lie{n}2$ are $\O_2(\R)$ invariants for the action on $\lie{n}2$ (see \Cref{sec:inv.planar} for a
more detailed investigation of these invariants), and hence $\O_2(\R)$ invariants for paths in $\R^2$. Furthermore, for any truncation order $n$ and paths $\CURVE,{\CURVE}'\in\R^2$, we have that
$\ccn(Y) = \ccn({Y}')$ if and only if there exists some element
of $\O_2(\R)$ such that $g\cdot \ccn(\CURVE) =\ccn({\CURVE}').$ The
following is then true by induction and the fact that the $\log$ map is an equivariant bijection.

\begin{proposition}
  Let $\CURVE, {\CURVE}'$ be paths in $\R^2$ such that
  \begin{align*}
  \cctwo(\CURVE):= \proj_{ \le 2}(\log(\IIS(\CURVE))),\qquad \cctwo({\CURVE}'):= \proj_{ \le 2}(\log(\IIS({\CURVE}'))),
  \end{align*}
  are elements of $\mathcal U_{2;\le 2}$.
  Define 
  \begin{align*}
    Y\coloneq \rhotwo(\cctwo(\CURVE))\cdot \CURVE,\qquad Y'\coloneq\rhotwo(\cctwo({\CURVE}'))\cdot \CURVE'.
  \end{align*}
	Then $\IIS(Y) = \IIS({Y}')$ if and only if there exists $g\in \O_2(\R)$ such that $\IIS(g\cdot \CURVE) = \IIS({\CURVE}')$.
\end{proposition}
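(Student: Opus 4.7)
The plan is to reduce both directions to the equivariance of the moving frame map $\rhotwo$ together with the equivariance of $\IIS$ stated in \eqref{eq:AdotIIS} and the bijectivity of $\log$ discussed in \Cref{crl:equivariant}. Since $\rhotwo$ is a moving frame in the sense of \Cref{def:mf}, it satisfies $\rhotwo(g\cdot \cctwo) = \rhotwo(\cctwo)\cdot g^{-1}$ for all $g\in\O_2(\R)$ and all $\cctwo\in\mathcal U_{2;\le 2}$. Also, applying $\proj_{\le 2}\circ\log$ to \eqref{eq:AdotIIS} gives $\cctwo(g\cdot \CURVE) = g\cdot \cctwo(\CURVE)$, so $\mathcal U_{2;\le 2}$ being $\O_2(\R)$-invariant means the hypothesis on one of the curves propagates to any rotated curve.

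For the forward direction, I would assume $\IIS(Y) = \IIS(Y')$. By \eqref{eq:AdotIIS}, this reads
\[
    \rhotwo(\cctwo(\CURVE))\cdot \IIS(\CURVE) = \rhotwo(\cctwo({\CURVE}'))\cdot\IIS({\CURVE}').
\]
Setting $g \coloneq \rhotwo(\cctwo({\CURVE}'))^{-1}\,\rhotwo(\cctwo(\CURVE)) \in \O_2(\R)$, another application of \eqref{eq:AdotIIS} yields $\IIS(g\cdot \CURVE) = \IIS({\CURVE}')$, which is exactly what is required.

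For the reverse direction, I would start from an element $g\in\O_2(\R)$ such that $\IIS(g\cdot \CURVE) = \IIS({\CURVE}')$. Since $\proj_{\le 2}\circ\log$ is equivariant, this forces $\cctwo({\CURVE}') = g\cdot \cctwo(\CURVE)$, and the $\O_2(\R)$-invariance of $\mathcal U_{2;\le 2}$ ensures both sides lie in the domain of $\rhotwo$. Using the equivariance of the moving frame one computes
\[
    \IIS(Y') = \rhotwo(\cctwo({\CURVE}'))\cdot\IIS({\CURVE}') = \bigl(\rhotwo(\cctwo(\CURVE))\cdot g^{-1}\bigr)\cdot\bigl(g\cdot\IIS(\CURVE)\bigr) = \rhotwo(\cctwo(\CURVE))\cdot\IIS(\CURVE) = \IIS(Y).
\]

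There is no real obstacle in this argument; the only subtlety worth flagging is that the constructions only make sense on the domain $\mathcal U_{2;\le 2}$, and one should notice that this set is $\O_2(\R)$-invariant so that applying $g$ preserves the hypothesis. Everything else is a direct manipulation of equivariance identities.
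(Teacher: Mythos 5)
Your proof is correct, and it takes a somewhat more self-contained route than the paper does. The paper disposes of this proposition in one line, by appealing to the previously established level-$n$ statement (that $\ccn(Y)=\ccn(Y')$ if and only if the truncated log-signatures lie on the same orbit, which rests on $\K_{2;\le n}$ being a cross-section intersecting each orbit exactly once) together with an induction over truncation orders and the equivariant bijectivity of $\log$. You instead work directly with the full, untruncated signature and never invoke the cross-section property at all: your forward direction only uses that $Y$ and $Y'$ are obtained from $\CURVE$ and $\CURVE'$ by \emph{some} group elements (so $g=\rhotwo(\cctwo(\CURVE'))^{-1}\rhotwo(\cctwo(\CURVE))$ does the job), and your reverse direction uses only the equivariance $\rhotwo(g\cdot\cctwo)=\rhotwo(\cctwo)\,g^{-1}$ from \Cref{def:mf} together with \eqref{eq:AdotIIS} and the equivariance of $\proj_{\le 2}\circ\log$ from \Cref{crl:equivariant}. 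What the paper's route buys is the stronger truncation-by-truncation statement (useful elsewhere in the paper); what yours buys is brevity and minimal hypotheses --- it makes clear that the proposition is purely a consequence of equivariance, not of the uniqueness of the cross-section point. Your remark that $\mathcal U_{2;\le 2}$ must be (and is) $\O_2(\R)$-invariant so that $\rhotwo$ is defined at $g\cdot\cctwo(\CURVE)$ is exactly the right point to flag.
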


Therefore two paths, starting at the origin are equivalent up to tree-like extensions and action of $\O_2(\R)$ if and only if $\IIS(Y) = \IIS({Y}')$. In this sense, the moving frame map $\rhotwo$ yields a method to invariantize a path $\CURVE$. In the following section, we show that this construction extends to paths in $\R^d$.

\subsection{Moving Frame on \texorpdfstring{$\lie{n}d$}{g2((Rd))}}\label{sec:MF}%
As for $\O_2(\R)$ on $\R^2$, the action of $\OdR$ on
paths in $\R^d$ induces an action on its (truncated) signature that coincides with the diagonal action on the ambient space $T_{\le n}(\R^d)$.
The induced action on the log-signature coincides with this diagonal action as well,
when considering $\lie{n}d$ as a subspace of $T_{\le n}(\R^d)$.

\newcommand\CROSS{{\K_{d;\le n}}}
Let $\ccn$\nomenclature[cn]{$\ccn$}{An element of $\lie{n}d$ with coordinates given by $c_{i_1i_2\dotsm i_m}$ for $m\leq n$} be an element of $\lie{n}d$ with coordinates given by $c_{i_1i_2\dotsm i_m}$ for $m\leq n$. We define the following submanifold of $\lie{n}d$:
\begin{equation}\label{eq:LAxsection}
  \CROSS = \left\{ c_{i}=0, c_{j(i+1)}=0, c_{d}>0,\ c_{i(i+1)}>0\,|\, 1\leq i\leq d-1, 1\leq j <i \right\} \subset \lie{n}d
\end{equation}
\nomenclature[Kd<=n]{$\CROSS$}{The cross-section for the action of $\OdR$ on $\UU$}
Let $\proj_{\le 2} : \lie{n}d \rightarrow \lie{2}d$ be the projection onto the first two levels (\Cref{ss:iis}). The projection of this submanifold onto $\lie{2}d$, $\proj_{\leq 2}(\CROSS)$,
is equal (up to the identification $\lie{2}d \cong ..$) to the real, positive points of $\LL$ in \eqref{eq:vMform} where 
$$
  \left(
  \begin{bmatrix}
    c_{1} \\
    \vdots \\
    c_{d}
  \end{bmatrix},
  \begin{bmatrix}
    0 & c_{12} & \dots &c_{1{d}} \\    
    -  c_{12}& 0 & \dots &c_{2d} \\    
    \dots& \dots & \ddots &\dots \\    
    -  c_{1d}& -c_{2d} & \dots &0
  \end{bmatrix}\right) = (v,M).
$$
Similarly we can define the analogue to $\UC$ in \eqref{eq:UC}. Consider the rational functions on $\lie{n}d$ given by
$$
F_i(\ccn):=f_i(v,M)|_{v_j=c_{j} \, m_{k\ell}=c_{k\ell}}
$$
for $1\leq i\leq d$ where $f_i(v,M)$ is given in Proposition \ref{prop:ComplexCS}. By Proposition \ref{prop:RealCS}, the functions $F_i$ are rational functions on $\lie2d$ with real coefficients. Then the following is a Zariski-open subset of $\lie{n}d$,
\newcommand\UU{\mathcal U_{d;\le n}}
\begin{align*}
\UU
&:= \left\{\ccn\in \lie{n}d\,\,|\,\, F_i(\ccn)\neq 0, \forall i, 1\leq i\leq d\right\},\\
\end{align*}\
\nomenclature[Ud<=n]{$\UU$}{The domain of the moving frame $\rhodn$, a Zariski-open subset of $\lie{n}d$}
where $\proj_{\leq 2}(\UU)= \UC$ if we identify $\cctwo$ with $(v,M)$ as above. In particular, both $\UU$ and $\CROSS$ are completely characterized by $\proj_{\leq 2}(\ccn)$, i.e.
\begin{align*}
\UU &= \proj_{\le n \to \le 2}^{-1}\left( \proj_{\leq 2} (\UU) \right) \subset \lie{n}d\\
\CROSS&= \proj_{\le n \to \le 2}^{-1}\left(  \proj_{\leq 2} (\CROSS) \right) \subset \lie{n}d,
\end{align*}
with $\proj_{\le n\to \le 2}$\nomenclature[projn2-1]{$\proj_{\le n\to \le 2}$}{The canonical projection $\proj_{\le n\to \le 2}:\,\lie{n}d\to\lie{2}d$} denoting the canonical projection from $\lie{n}d$ onto $\lie{2}d$.

We now show that on the subset $\UU \subset \lie{n}d$ the submanifold $\CROSS$ is a cross-section, which induces a moving frame.
\begin{lemma}\label{lem:N2transversality}
For any point $\cctwo \in\K_{d;\le 2} \cap U_{d;\le 2}$, the orbit $\OdR \cdot \cctwo$ and $\K_{d; \le 2}$ intersect transversally.
\end{lemma}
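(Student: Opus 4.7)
The plan is as follows. First, observe that $\K_{d;\le 2}$ is an open subset of the linear subspace
\begin{equation*}
T := \bigl\{(v,M) \in \lie{2}d : v_i = 0 \text{ for } i < d,\ M_{jk} = 0 \text{ whenever } |k-j| \ge 2\bigr\},
\end{equation*}
so $T_p \K_{d;\le 2} = T$ at every $p \in \K_{d;\le 2}$ and $\dim T = d$. By \Cref{cor:OfreeR} the action of $\OdR$ on $\mathcal U_{d;\le 2}$ is free, hence $\dim T_p(\OdR \cdot p) = \dim \OdR = d(d-1)/2$. Since $d + d(d-1)/2 = \dim \lie{2}d$, transversality at $p$ is equivalent to the trivial intersection $T_p(\OdR \cdot p) \cap T_p \K_{d;\le 2} = \{0\}$.

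Next I would parametrise the tangent space to the orbit at $p = (v,M)$ by the infinitesimal action $X \mapsto (Xv, [X,M])$ for $X \in \mathfrak{so}(d,\R)$, which is injective by freeness. A vector in this space lies in $T$ precisely when $(Xv)_i = 0$ for $i < d$ and $[X,M]_{jk} = 0$ for all $k - j \ge 2$. Because $v = c_d e_d$ with $c_d > 0$ on $\K_{d;\le 2}$, the first set of equations forces $X_{id} = 0$ for $i < d$, and skew-symmetry then forces $X_{di} = 0$ for all $i$; so the last row and column of $X$ vanish. The question reduces to: if $X$ is skew-symmetric with last row and column zero and $[X,M]_{jk} = 0$ for all $k - j \ge 2$, must $X = 0$?

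For this I would use the explicit commutator formula (valid for the tridiagonal skew-symmetric $M$ with super-diagonal entries $c_{i(i+1)}$, with the convention that any $c_{ab}$ or $X_{ab}$ with an index outside $\{1,\dots,d\}$ is zero)
\begin{equation*}
  [X,M]_{jk} = c_{(k-1)k} X_{j,k-1} - c_{k(k+1)} X_{j,k+1} + c_{(j-1)j} X_{j-1,k} - c_{j(j+1)} X_{j+1,k},
\end{equation*}
and run a reverse induction on the column index. In the base step, plugging $k = d$ and $j \le d-2$ in and using that the last column of $X$ is zero yields $[X,M]_{jd} = c_{(d-1)d} X_{j,d-1}$, so positivity of $c_{(d-1)d}$ forces $X_{j,d-1} = 0$ for $j = 1,\dots,d-2$. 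Assuming inductively that $X_{j',k'} = 0$ for every above-diagonal entry with $k' \ge d-s+1$, for $j \le d-s-1$ the three terms in $[X,M]_{j,d-s+1}$ other than $c_{(d-s)(d-s+1)} X_{j,d-s}$ all vanish by the induction hypothesis, and positivity of $c_{(d-s)(d-s+1)}$ yields $X_{j,d-s} = 0$. Carrying the induction down to $s = d-2$ kills every above-diagonal entry of $X$, and skew-symmetry then gives $X = 0$.

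The main obstacle is just the careful bookkeeping in the inductive step --- verifying column by column that each of the three spurious terms in the commutator formula is already known to be zero at the moment it appears --- but nothing deeper than positivity of the super-diagonal entries $c_{i(i+1)}$ on $\K_{d;\le 2}$ and skew-symmetry of $X$ is needed.
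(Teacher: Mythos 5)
Your proposal is correct and follows essentially the same route as the paper: a dimension count reducing transversality to triviality of the intersection of tangent spaces, the observation that $v=c_d e_d$ kills the last column of the skew-symmetric generator, and then a reverse induction down the columns using positivity of the super-diagonal entries $c_{i(i+1)}$ to force the generator to vanish. The only (cosmetic) differences are that the paper parametrises the intersection point as $A\cdot\cctwo$ and phrases the induction via commutation relations of the basis $\Gamma_{i,j}$ rather than an entrywise commutator formula.
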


\begin{proof}
	First, we recall that, by definition, \(\OdR\cdot\cctwo\) and \(\K_{d;\le 2}\) intersect transversally
	if and only if, at
	every point \(q\) in the intersection, the tangent spaces \(T_q(\OdR\cdot\cctwo)\) and \(T_q\K_{d;\le 2}\)
	generate the whole ambient space \(\lie 2d\),
	that is 
	\begin{equation*}
	   T_q(\OdR\cdot\cctwo)+T_q\K_{d;\le 2}=\lie2d.
	\end{equation*}

	Now, at a point \(q=A\cdot\cctwo=(Av,AMA^{\top})\) in the orbit, the tangent space has the form
	\begin{equation}
		T_q\Big(\OdR\cdot\cctwo\Big)=\{(HAv,[H,AMA^{\top}]):H\in\Skew d\R\}.
		\label{eq:TqOdR}
	\end{equation}
	Indeed, recall that for a manifold \(M\), its tangent space at a point \(q\) is the linear space \(T_qM\coloneqq\{\gamma'(0):\gamma\text{ curve s.t. }\gamma(0) = q\}\). A curve \(\gamma\) on \(\OdR\cdot\cctwo\) such that \(\gamma(0)=q\) has the form \(\gamma(t)=(L(t)A)\cdot\cctwo\) for some curve \(t\mapsto L(t)\) in \(\OdR\) such that \(L(0)=I\).
	Hence,
	\begin{align*}
	  \gamma'(0)&= (L'(0)Av, L'(0)AMA^\top+ AMA^\top L'(0)^\top)\\
	  &=(L'(0)Av, L'(0)AMA^\top-AMA^\top L'(0)).
  \end{align*}
	
	The tangent space to the cross section is
	$$
		T_q\K_{d;\le 2}=\{c_{i}=0,c_{j(i+1)}=0 : 1\le i\le d-1, 1\le j<i\}.
	$$
	We note that
	\[
		\dim T_q\K_{d;\le 2}=d,\quad\dim T_q\Big(\OdR\cdot\cctwo\Big)=\frac{d(d-1)}{2},
	\]
	where the second equality since the action of $\OdR$ is free on $ U_{d;\le 2}$ by Corollary \ref{cor:OfreeR}.
	Thus we have that \(\dim T_q\K_{d;\le 2}+\dim T_q\Big(\OdR\cdot\cctwo\Big)=\dim\lie 2d\).
	Therefore, we only need to show that \(T_q\K_{d;\le 2}\cap T_q\Big(\OdR\cdot\cctwo\Big)=\{0\}\) for all
	\(q\in\K_{d;\le 2}\cap\left(\OdR\cdot\cctwo\right)\).
	
	Let \((\Gamma_{i,j}:1\le i<j\le d)\) be the standard basis of \(\Skew d\R\), that is,
	\((\Gamma_{i,j})_{k,l}=\delta_{i,k}\delta_{j,l}-\delta_{j,k}\delta_{i,l}\).
	It is not hard to show that the commutation relations
	\begin{equation}
		[\Gamma_{i,j}, \Gamma_{k,k + 1}] = \Gamma_{k + 1,j} \delta_{i,k} +
		\Gamma_{i,k + 1} \delta_{j,k} - \Gamma_{k,j} \delta_{i, k + 1} - \Gamma_{i, k} \delta_{j, k + 1}
		\label{eq:Gcomm}
	\end{equation}
	hold for all \(1\le k<d\) and \(1\le i<j\le d\).
	By \cref{eq:TqOdR}, a generic element \(p\in T_q\Big(\OdR\cdot\cctwo\Big)\) has the form
	\(p=(HAv,[H,AMA^{\top}])\) with
	\[
		H=\sum_{1\le i<j\le d}h_{i,j}\Gamma_{i,j}\in\Skew d\R.
	\]
	But since \(q=(Av,AMA^{\top})\in\K_{d;\le 2}\),
	\[
		Av=\alpha e_d,\quad AMA^\top=\sum_{k=1}^{d-1}\beta_k\Gamma_{k,k+1}
	\]
	with \(\alpha>0\), and \(\beta_k>0\) for all \(k\in\{1,\dotsc,d-1\}\).
	If \(p\) also belongs to \(T_q\K_{d;\le 2}\), then we have in particular that
	\[
		HAv=\sum_{i=1}^{d-1}h_{i,d}e_i=\alpha'e_d,
	\]
	for some \(\alpha'\in\R\), thus \(h_{i,d}=0\) for all \(i\in\{1,\dotsc,d-1\}\).
	Now we show that \(h_{i,j}=0\) for all \(1\le i<j\le d-1\) by induction on \(r\coloneq d-1-j\).
	By \cref{eq:Gcomm}, we see that
	\[
		[H,AMA^\top]=\sum_{1\le i<j\le d-1}\sum_{k=1}^{d-1}h_{i,j}\beta_k(\Gamma_{(k + 1),j} \delta_{i,k} +
		\Gamma_{i,(k + 1)} \delta_{j,k} - \Gamma_{k,j} \delta_{i, (k + 1)} - \Gamma_{i, k} \delta_{j, (k
	+ 1)}),
	\]
	so that
	\[
		[H,AMA^\top]_{i,d-1}=h_{i,d-1}\beta_{d-1}=0.
	\]
	for \(i\in\{1,\dotsc,d-2\}\). Therefore, \(h_{i,d-1}=0\) for all \(i\in\{1,\dotsc,d-2\}\), and the claim is proven when \(r=0\).
	Suppose it is true for all \(r'<r\). Then
	\[
		[H,AMA^\top]_{i,d-1-r}=h_{i,d-1-r}\beta_{d-1-r}=0
	\]
	for \(i\in\{1,\dotsc,d-2-r\}\), hence \(h_{i,d-1-r}=0\) for all \(i\in\{1,\dotsc,d-2-r\}\).
	Finally, we have \(H=0\) thus \(p=(HAv,[H,AMA^\top])=0\).

	We have shown that if \(q\in\OdR\cdot\cctwo\cap\K_{d;\le 2}\) then \(\dim T_q\Big(\OdR\cdot\cctwo\Big)+\dim
	T_q\K_{d;\le 2}=\dim\lie 2d\) and \(T_q\Big(\OdR\cdot\cctwo\Big)\cap T_q\K_{d;\le 2}\) is trivial.
	It follows that if \(q\in\left(\OdR\cdot\cctwo\right)\cap\K_{d;\le 2}\), then
	\[
		\lie 2d=T_q\Big(\OdR\cdot\cctwo\Big)\oplus T_q\K_{d;\le 2},
	\]
	and in particular \(\OdR\cdot\cctwo\) and \(\K_{d;\le 2}\) intersect transversally.
\end{proof}

\begin{theorem}\label{thm:MainThm}
  The submanifold $\CROSS$ in \eqref{eq:LAxsection} is a cross-section for the action of $\OdR$ on $\UU \subset \lie{n}d$.
  In particular $\CROSS$ induces a moving frame map $\rhodn : \UU \rightarrow \OdR$.\nomenclature[rhod]{$\rhodn$}{The moving frame map for the action of $\O_d(\R)$ on $\UU$, where $\rhodn(\ccn)=\rhod(\proj_{\le 2}\ccn)$}
\end{theorem}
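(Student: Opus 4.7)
The plan is to reduce the claim to the level-two situation analyzed in Section \ref{sec:complexdetour} and lift through the canonical projection $\proj_{\le n\to\le 2}$. By construction, both $\UU$ and $\CROSS$ are preimages under $\proj_{\le n\to\le 2}$ of subsets of $\lie{2}d$, and since the $\OdR$-action is diagonal it commutes with the projection. Consequently, for $\ccn\in\UU$ and $A\in\OdR$, one has $A\cdot\ccn\in\CROSS$ if and only if $A\cdot\proj_{\le 2}(\ccn)\in\K_{d;\le 2}$, where $\K_{d;\le 2}\coloneq\proj_{\le 2}(\CROSS)$. Accordingly, it suffices to show that $\K_{d;\le 2}$ meets every $\OdR$-orbit in $\UR=\proj_{\le 2}(\UU)$ transversally in a single point, and then to lift this to $\lie{n}d$.

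For existence of a representative in $\K_{d;\le 2}$, given $\cctwo\in\UR$, Proposition \ref{prop:RealCS} furnishes an $A_1\in\OdR$ with $A_1\cdot\cctwo\in\LLR$, i.e.\ of the form \eqref{eq:vMform}. The remaining sign conditions $c_d>0$ and $c_{i(i+1)}>0$ defining $\K_{d;\le 2}$ inside $\LLR$ are then secured by post-composing with a suitable element of $\Wd\subset\OdR$ (the diagonal matrices with $\pm 1$ entries): formula \eqref{eq:Waction} shows that the signs of $c_d$ and of each $c_{i(i+1)}$ can be adjusted independently by an appropriate choice of the diagonal entries $w_1,\dots,w_d$. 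For uniqueness, if $A_2\in\OdR$ also sends $\cctwo$ into $\K_{d;\le 2}$, then both $A_1\cdot\cctwo$ and $A_2\cdot\cctwo$ lie in the relative $\Wd$-section $\LLR$, and Proposition \ref{prop:RealCS} combined with Proposition \ref{prop:ComplexCS} produces $D\in\Wd$ with $A_2\cdot\cctwo=D\cdot(A_1\cdot\cctwo)$; freeness (Corollary \ref{cor:OfreeR}) then forces $A_2=DA_1$, and the positivity constraints together with \eqref{eq:Waction} force $D=I$.

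Transversality at any point $q\in\CROSS\cap(\OdR\cdot\ccn)$ is obtained by lifting Lemma \ref{lem:N2transversality} through the projection. Because $\CROSS=\proj_{\le n\to\le 2}^{-1}(\K_{d;\le 2})$, the tangent space $T_q\CROSS$ contains the full kernel $W_3\oplus\dots\oplus W_n$ of $d\proj_{\le n\to\le 2}$ and projects onto $T_{\proj_{\le 2}(q)}\K_{d;\le 2}$. Similarly, the $\OdR$-equivariance of $\proj_{\le n\to\le 2}$ implies that $T_q(\OdR\cdot\ccn)$ projects onto $T_{\proj_{\le 2}(q)}(\OdR\cdot\proj_{\le 2}(\ccn))$. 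The level-two identity $T_{\proj_{\le 2}(q)}\K_{d;\le 2}+T_{\proj_{\le 2}(q)}(\OdR\cdot\proj_{\le 2}(\ccn))=\lie{2}d$ from Lemma \ref{lem:N2transversality} thereby upgrades to $T_q\CROSS+T_q(\OdR\cdot\ccn)=\lie{n}d$.

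With $\CROSS$ established as a cross-section, the moving frame map $\rhodn\colon\UU\to\OdR$ is obtained from \Cref{thm:mfxsection}, combined with Remark \ref{rem:AlgRegularity} to verify condition $(\ast)$ from the freeness of the algebraic action. The main technical hurdle is the careful bookkeeping of the $\Wd$-action on the sign conditions in the existence and uniqueness steps; once this is settled, everything else is a straightforward propagation of the results of Section \ref{sec:complexdetour} through $\proj_{\le n\to\le 2}$.
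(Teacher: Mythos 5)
Your proposal is correct and follows essentially the same route as the paper: reduce to the level-two action on $\lie{2}d$ via the diagonal structure and $\proj_{\le n\to\le 2}$, use the Section~\ref{sec:complexdetour} analysis (reaching $\LLR$, then adjusting signs with $\Wd$ via \eqref{eq:Waction}) for existence and uniqueness of the representative in $\K_{d;\le 2}$, lift the transversality of Lemma~\ref{lem:N2transversality} through the projection using that $T_q\CROSS$ contains the kernel of $d\proj_{\le n\to\le 2}$, and invoke \Cref{thm:mfxsection} together with Remark~\ref{rem:AlgRegularity}. The only point worth flagging is a small imprecision in the uniqueness step: the fact that two points of $\LLR\cap\UR$ in one $\OdR$-orbit differ by an element of $\Wd$ is not quite what Propositions~\ref{prop:RealCS} and \ref{prop:ComplexCS} state directly -- it is most cleanly drawn from the proof of Proposition~\ref{prop:LiNi} (as the paper does in Corollary~\ref{cor:OFree}) or from the fiber analysis in Proposition~\ref{prop:WInvGen}; the paper itself sidesteps this by iterating the $\O^{d-i}_d(\R)$-reduction of Proposition~\ref{prop:Roti} directly on the positivity constraints, which avoids appealing to the relative-section structure at that step.
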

\begin{proof}
We first claim that $\CROSS$ intersects each orbit in $\UU$ at a unique point.
Denote the linear span of $\CROSS$ as
$$
K := \left\{ c_{i}=0, c_{j(i+1)}=0\,|\, 1\leq i\leq d-1, 1\leq j <i \right\} \subset \lie{n}d.
$$
Note that the action on $\proj_{\le 2} \lie{n}d = \lie2d$ is isomorphic to the action on $\RdSkew$ given in \eqref{eq:simpleForm}.
Thus for any $\ccn \in \UU$, by Proposition \ref{prop:RealCS} and the diagonality of the action (see \Cref{ss:invariants}),
there exists an element of $g\in \OdR$ such that $g\cdot \ccn = \tilde{\cc}_{\leq n} \in K$.

Consider the subgroup $W_\R \subset \OdR$ of diagonal matrices $w$ with diagonal entries $w_{jj}\in \{-1,1\}$, $1\leq j\leq d$. By Proposition \ref{prop:LiNi} any element of $W_\R$ sends a point in $K$ to $K$.
For any $\tilde{\cc}_{\leq n} \in K$, the action of $W_\R$ on the coordinates $\proj_{\leq
2}(\ccn)=\cctwo$ is given by the following (see \eqref{eq:Waction}):
\begin{align*}
  c_{d} \mapsto w_{dd} c_{d}, \quad c_{i(i+1)} \mapsto w_{ii}w_{(i+1)(i+1)}c_{i(i+1)}.
\end{align*}

The element $w\in W_\R$ such that $w_{jj}= -1$ for $1\leq j \leq d$ changes only the sign on $c_{d}$.
The element $w\in W_\R$ where $w_{jj} = -1$ for $1\leq j \leq i$ and $w_{jj} = 1$ for $i < j \leq d$
changes only the sign of $c_{i(i+1)}$.
Thus there exists $g\in W_\R$ such that $g\cdot \tilde{\cc}_{\leq n} \in \CROSS$, implying $\CROSS$ intersects each orbit in $\UU$.

Now suppose that for some $\ccn\in \CROSS$,
$g\in \OdR$ we have $g\cdot \ccn\in \CROSS$.
We show that this implies $g = \id$.
Since the action of $\OdR$ on $T_1(\R^d)$ is isomorphic to
the canonical action on $\R^d$,
$g\in \O^{d-1}_d(\R)$ (recall the notation after \eqref{eq:SOimatrix}).
By Proposition \ref{prop:Roti},
the action of $\O^{d-1}_d(\R)$ on the coordinates $c_{1d}, c_{2,d},\hdots, c_{(d-1)d}$ of $\ccn$
is isomorphic to the canonical action on $\R^{d-1}$. Thus we deduce that $g$ must be in $\O^{d-2}_d(\R)$.
Iterating, we obtain that $g$ must be the identity, as claimed, implying that $\CROSS$ intersects each orbit in $\UU$ exactly once.

We now show that the intersection with each orbit is transverse. By Corollary \ref{cor:OfreeR} the action is free on $U_{d;\le 2}$, and thus on $\UU$. Since the action is free on $\UU$, each orbit $\OdR \cdot \ccn$ is smooth and of dimension $n(n-1)/2$ (see Proposition \ref{prop:alggroupfacts}).  Let $\ccn$ be a point in $\CROSS$. Since $\CROSS$ is on open subset of the linear space $K$, we have $T_{\ccn}\CROSS =K$.
Since $\CROSS$ and $\OdR \cdot \ccn$ are of complementary dimension, $\CROSS$
intersects $\OdR \cdot \ccn$ transversally if and only if the dimension of the span of their tangent
spaces is equal to the dimension of $\UU$.

Since $\OdR$ acts diagonally 
we have that
\begin{align*}
  \proj_{\le 2}(T_{\ccn}\CROSS + T_{\ccn} (\OdR \cdot \ccn)) = \proj_{\le 2}(T_{\ccn}\CROSS) + \proj_{\le 2}(T_{\ccn} (\OdR \cdot \ccn))&\\
                                               = T_{\proj_{\le 2}(\ccn)} \proj_{\le 2}(\CROSS) + T_{\proj_{\le 2}(\ccn)} \left(\OdR \cdot \proj_{\le 2}(\ccn)\right)&,
\end{align*}
where $V + W$ denotes the span of two subspaces $V,W$. Then by Lemma \ref{lem:N2transversality} 
\begin{align*}
  \proj_{\le 2}(T_{\ccn}\CROSS + T_{\ccn} (\OdR \cdot \ccn)) = \lie{2}d.
\end{align*}
Since for any vector $v \in T_{\proj_{\le 2}(\ccn)} \proj_{\le 2}(\CROSS)$,
$\langle v \rangle \oplus \lie{3}d$ is a subspace of $T_{\ccn}\CROSS$, we have that
$T_{\ccn}\CROSS + T_{\ccn} (\OdR \cdot \ccn)=\lie{n}d$. Thus $\CROSS$ and
$\OdR\cdot \ccn$ intersect transversally.

Therefore $\CROSS$ intersects transversally each orbit of $\UU$ at a unique point, and hence by definition is a cross-section for this action.
The free and algebraic action of $\OdR$ on $\UU$ satisfies the hypothesis of Theorem \ref{thm:mfxsection} (see Remark \ref{rem:AlgRegularity}), and hence there exists a moving frame map $\rhodn : \UU \rightarrow \OdR$ taking each element of $\UU$ to the unique intersection point of its orbit and $\CROSS$.
\end{proof}

The proof of Proposition \ref{prop:ComplexCS} provides a road-map for explicitly finding the element of $\OdR$ taking any point $\ccn\in \UU$ to $\CROSS$, and hence $\rhodn(\ccn)\cdot \ccn$. By successively applying rotations, one can bring $\ccn$ to the cross-section $\CROSS$. For an example of doing this in practice see Example \ref{ex:mfmomentcurve}.

An important consequence of Theorem \ref{thm:MainThm} is the following corollary.

\begin{corollary}
Two elements $\ccn,\tilde{\cc}_{\leq n} \in \UU$ lie in the same orbit if and only if they take the same value on the cross-section $\CROSS$, i.e. if and only if $\rhodn(\ccn) \cdot \ccn = \rhodn(\tilde{\cc}_\leq{n}) \cdot \tilde{\cc}_{\leq n}$.
\end{corollary}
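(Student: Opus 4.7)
The plan is to obtain this corollary as an almost immediate consequence of \Cref{thm:MainThm} combined with the general equivariance property of moving frames (\Cref{def:mf}). Since \Cref{thm:MainThm} already asserts that $\rhodn:\UU\to\OdR$ is a moving frame associated with the cross-section $\CROSS$, the two directions of the equivalence reduce to manipulations with the defining identity $\rhodn(g\cdot z)=\rhodn(z)g^{-1}$.

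For the forward implication, suppose $\ccn$ and $\tilde{\cc}_{\leq n}$ lie in the same orbit, i.e.\ there exists $g\in\OdR$ with $\tilde{\cc}_{\leq n}=g\cdot\ccn$. Since $\UU$ is $\OdR$-invariant (being the preimage of the $\OdR$-invariant set $\proj_{\le 2}(\UU)=\UR$ under the equivariant projection $\proj_{\le n\to\le 2}$), both elements belong to $\UU$ and $\rhodn$ is defined on each. Applying the equivariance property yields $\rhodn(\tilde{\cc}_{\leq n})=\rhodn(\ccn)g^{-1}$, and therefore
\[
\rhodn(\tilde{\cc}_{\leq n})\cdot\tilde{\cc}_{\leq n}=\rhodn(\ccn)g^{-1}\cdot(g\cdot\ccn)=\rhodn(\ccn)\cdot\ccn.
\]

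For the reverse implication, if $\rhodn(\ccn)\cdot\ccn=\rhodn(\tilde{\cc}_{\leq n})\cdot\tilde{\cc}_{\leq n}$, then setting $g\coloneqq\rhodn(\tilde{\cc}_{\leq n})^{-1}\rhodn(\ccn)\in\OdR$ one gets $\tilde{\cc}_{\leq n}=g\cdot\ccn$, so the two points lie in the same $\OdR$-orbit.

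There is essentially no hard step here: the content of the corollary is geometric (the cross-section hits each orbit in exactly one point, so orbit equality is detected by the canonical form $\rhodn(\cdot)\cdot(\cdot)$), and all of the real work has already been carried out in \Cref{thm:MainThm} (transversality, freeness, uniqueness of intersection on $\UU$). The only small pitfall to be careful about is verifying that both $\ccn$ and $\tilde{\cc}_{\leq n}$ remain in the domain $\UU$ when one is obtained from the other by a group element, which as noted above follows from $\OdR$-invariance of $\UU$.
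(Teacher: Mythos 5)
Your proof is correct and follows exactly the route the paper intends: the corollary is stated without proof as an immediate consequence of \Cref{thm:MainThm} (the cross-section meets each orbit in $\UU$ at a unique point), and your two directions — equivariance of $\rhodn$ for the forward implication, and exhibiting $g=\rhodn(\tilde{\cc}_{\leq n})^{-1}\rhodn(\ccn)$ for the converse — are the standard way to spell that out. Your side remark that $\UU$ is $\OdR$-invariant (being cut out by the invariant functions $F_i$) is also correct and worth having made explicit.
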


Thus to find a unique representative of the orbit of $\ccn\in \UU$ we can ``invariantize'' $\ccn$ by computing $\rhodn(\ccn) \cdot \ccn$, and the smooth functions defining the non-zero coordinates of $\CROSS \cap \OdR \cdot \ccn$ are invariant functions which characterize the orbit. Note that the cross-section $\CROSS$ and the moving frame only depend on the $\lie{2}d$ coordinates. In particular we have that for any path $\CURVE$ such that $\ccn(\CURVE)=\proj_{\leq n}(\log(\IIS(\CURVE)))\in U^d_n$
$$
\rhodn(\ccn(\CURVE)) = \rhodn(\proj_{\leq 2}(\ccn(\CURVE))) =: \rhod(\cctwo(\CURVE))
$$
\nomenclature[rhod]{$\rhod$}{The moving frame map for the action of $\OdR$ on $\mathcal{U}_{d;\le 2}$}
which implies that the ``invariantization'' of a path $Y:= \rhod(\cctwo(\CURVE))\cdot \CURVE$ is well-defined. This is due to the diagonal nature of the action of $\OdR$ on $\lie{n}d$, and the fact that $\dim(\OdR) < \dim(\lie{2}d)$. Since the action of the coordinates on $\lie{2}d$ is not affected by the higher level coordinates, we can define a cross-section on $\lie{2}d$ that extends naturally to $\lie{n}d$. For higher-dimensional groups one may have to consider a cross-section on $\lie{3}d$ or higher.

As a consequence, the infinite log signature (and thus the iterated-integrals signature) of a path $\CURVE$ under the action of $\OdR$ is characterized by its value on the cross-section.

\begin{theorem}\label{thm:IISequiv}
  For any two paths $\CURVE, \tilde{\CURVE}$ in $\R^d$ such that $\cctwo(\CURVE) := \proj_{\le2}(\log(\IIS(\CURVE))), \cctwo(\tilde{\CURVE}) := \proj_{\le2}(\log(\IIS(\tilde{\CURVE})))$ are elements of 
  $\mathcal U_{d;\le 2}$, define
  \begin{align*}
  Y:=\rhod\left(\cctwo(\CURVE)\right) \CURVE,\qquad \tilde{Y}:=\rhod\left(\cctwo(\tilde{\CURVE})\right)\tilde{\CURVE}.
  \end{align*}
  Then $\IIS(Y) = \IIS(\tilde{Y})$ if and only if there exists $g\in \OdR$ such that $\IIS(g\cdot \CURVE) = \IIS(\tilde{\CURVE})$. 
\end{theorem}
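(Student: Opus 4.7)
The plan is to reduce this to a direct computation using two ingredients that are already in place: the equivariance identity $\IIS(A\cdot \CURVE) = A\cdot\IIS(\CURVE)$ from \eqref{eq:AdotIIS}, and the defining equivariance property of the moving frame, $\rhod(g\cdot z) = \rhod(z)\cdot g^{-1}$ (Definition \ref{def:mf}), specialized to $z = \cctwo(\CURVE)$ via the fact, emphasized just before the theorem statement, that $\rhod$ depends only on the $\lie{2}d$-projection of its argument. Set $A \coloneq \rhod(\cctwo(\CURVE))$ and $\tilde A \coloneq \rhod(\cctwo(\tilde\CURVE))$, so that by definition $Y = A\cdot \CURVE$ and $\tilde Y = \tilde A\cdot\tilde\CURVE$, and hence $\IIS(Y) = A\cdot\IIS(\CURVE)$ and $\IIS(\tilde Y) = \tilde A\cdot\IIS(\tilde\CURVE)$.

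For the forward direction, I would simply observe that if $\IIS(Y) = \IIS(\tilde Y)$ then $A\cdot\IIS(\CURVE) = \tilde A\cdot\IIS(\tilde\CURVE)$, so the element $g \coloneq \tilde A^{-1}A \in \OdR$ satisfies $\IIS(g\cdot\CURVE) = g\cdot\IIS(\CURVE) = \IIS(\tilde\CURVE)$, again by equivariance.

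For the converse, suppose $g \in \OdR$ with $\IIS(g\cdot\CURVE) = \IIS(\tilde\CURVE)$. By equivariance this reads $g\cdot\IIS(\CURVE) = \IIS(\tilde\CURVE)$; projecting onto $\lie{2}d$ (and using that the action is diagonal, hence commutes with $\proj_{\le 2}$) gives $g\cdot\cctwo(\CURVE) = \cctwo(\tilde\CURVE)$. In particular $\cctwo(\tilde\CURVE) \in \mathcal U_{d;\le 2}$ by the hypothesis, so both moving frame evaluations are defined. Applying the equivariance of $\rhod$ to the identity $\cctwo(\tilde\CURVE) = g\cdot\cctwo(\CURVE)$ yields $\tilde A = \rhod(g\cdot\cctwo(\CURVE)) = A g^{-1}$, whence $\tilde A\, g = A$. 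Chaining the equivariance of $\IIS$ then gives
\[
\IIS(\tilde Y) = \tilde A\cdot\IIS(\tilde\CURVE) = \tilde A\cdot\bigl(g\cdot\IIS(\CURVE)\bigr) = (\tilde A g)\cdot\IIS(\CURVE) = A\cdot\IIS(\CURVE) = \IIS(Y).
\]

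There is no real obstacle here; the only point that deserves a sentence of justification is why $\rhod$ is defined at both $\cctwo(\CURVE)$ and $\cctwo(\tilde\CURVE)$ in the backward direction. This follows from the hypothesis that both lie in $\mathcal U_{d;\le 2}$, and from the fact—already noted in the text—that $\mathcal U_{d;\le 2}$ is $\OdR$-invariant (since each defining invariant $F_i$ is $\OdR$-invariant by \Cref{prop:RealCS}), so the consistency relation $g\cdot\cctwo(\CURVE) = \cctwo(\tilde\CURVE)$ is automatic with both sides in the domain of $\rhod$.
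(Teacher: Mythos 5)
Your proof is correct, and it is worth noting that the paper itself gives no explicit proof of this theorem: it is presented as a consequence of \Cref{thm:MainThm}, the corollary that two elements of $\UU$ lie in the same orbit iff they have the same value on the cross-section $\CROSS$, and (as spelled out for the $d=2$ analogue in \Cref{sec:MF2}) an induction over truncation levels together with the equivariant bijectivity of $\log$. Your route is slightly different and arguably cleaner: you bypass both the level-by-level induction and the uniqueness-of-intersection-point argument, and instead rest everything on the single equivariance identity $\rhod(g\cdot z)=\rhod(z)\cdot g^{-1}$ of \Cref{def:mf} (which is itself guaranteed by \Cref{thm:MainThm} via \Cref{thm:mfxsection}), combined with \eqref{eq:AdotIIS} and the fact that the diagonal action commutes with $\proj_{\le 2}$. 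Indeed your forward direction uses nothing about the cross-section at all — it would work for any map into $\OdR$ — while the backward direction is exactly where the moving-frame property enters. What the paper's implicit approach buys is the stronger intermediate statement that the common value $\rhod(\cctwo(\CURVE))\cdot\ccn(\CURVE)$ is the \emph{canonical form} on $\CROSS$, i.e.\ the unique intersection of the orbit with the cross-section, which is what makes the coordinates of $Y$ a fundamental set of invariants; your argument only needs, and only establishes, the equivalence stated in the theorem. Your closing remark about why both $\cctwo(\CURVE)$ and $\cctwo(\tilde\CURVE)$ lie in the domain of $\rhod$ is the right point to flag, and your justification (both are in $\mathcal U_{d;\le 2}$ by hypothesis, and this set is $\OdR$-invariant) is sound.
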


\subsection{Towards a fundamental set of polynomial invariants}\label{sec:towardpoly}
The non-constant-zero coordinates of $\rhodn(\ccn)\cdot\ccn$ form a fundamental set of invariants for the action of $\O_d(\R)$ on $\UU$, since for any $\O_d(\R)$ invariant function $I:\,\UU\to\mathbb{R}$ we have $I(\ccn)=I(\rhodn(\ccn)\cdot\ccn)$. The coordinate functions of $\rhodn(\ccn)\cdot\ccn$ are, however, in general not rational. However polynomial invariants of the iterated-integral signature have a rich structure and are often desired (see \cite{DR2019}), and hence it is of strong interest to obtaining a minimal set of \emph{polynomial} invariants seperating orbits. 

In fact, there is even the following conjecture \cite[Conjecture~7.2]{DR2019} that polynomial invariants seperate orbits of paths up to tree like equivalence for any subgroup of $\SL_d(\R)$.
\begin{conjecture}\label{conj:diehlreizenstein}\textnormal{(Diehl-Reizenstein)}
Let $\CURVE,\CURVE':\,[0,T]\to\mathbb{R}^d$ be two curves such that
\begin{equation*}
    \langle \IIS(\CURVE),\varphi\rangle=\langle\ISS(\CURVE'),\varphi\rangle
\end{equation*}
for any $\varphi\in T(\R^d)$ such that $\tilde{\phi}_{A^\top}(\varphi)=\varphi$ for any $A\in G$. Then, there is $A\in G$ and a curve $\bar{\CURVE}$ which is tree-like equivalent to $\CURVE$ such that
\begin{equation*}
    A\bar\CURVE=\CURVE'.
\end{equation*}
\end{conjecture}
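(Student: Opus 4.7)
The plan is to combine the moving-frame machinery developed earlier in this paper with classical invariant theory and the Hambly--Lyons characterization of paths by their iterated integrals signature modulo tree-like equivalence \cite{hambly2010uniqueness}. The first reduction is that it suffices to produce $A\in G$ such that $\IIS(A\CURVE)=\IIS(\CURVE')$: Hambly--Lyons then yields a curve $\bar\CURVE$ tree-like equivalent to $\CURVE$ with $A\bar\CURVE=\CURVE'$, which is precisely the conclusion.

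Next I would unpack the hypothesis. By Ree's theorem both $\IIS(\CURVE)$ and $\IIS(\CURVE')$ lie in $\G\dparenl\R^d\dparenr$, so the shuffle relation lets one express every polynomial function of a character's coefficients as a linear functional on $T(\R^d)$. The assumption that $\langle\IIS(\CURVE),\varphi\rangle=\langle\IIS(\CURVE'),\varphi\rangle$ for every $G$-invariant $\varphi$ therefore says precisely that all polynomial $G$-invariants of the truncated signatures $\proj_{\le n}\IIS$ coincide, for every $n$.

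The core task is then to promote this agreement on polynomial invariants to the existence of a single $A\in G$ relating the two signatures at every level. When $G$ is reductive — in particular for $G=\OdR$ — Hilbert's finiteness theorem together with the fact that orbits of compact groups are closed implies that polynomial invariants separate $G$-orbits on each $\lie{n}d$; this already produces, for each $n$, some $A_n\in G$ with $A_n\cdot\proj_{\le n}\IIS(\CURVE)=\proj_{\le n}\IIS(\CURVE')$. The moving frame $\rhodn$ from \Cref{thm:MainThm} then upgrades this to a coherent choice: because the cross-section $\CROSS$ is cut out by conditions only on $\proj_{\le 2}$, the moving frame depends only on the level-two truncation, and the single group element $A\coloneq\rhod(\cctwo(\CURVE'))^{-1}\rhod(\cctwo(\CURVE))$ simultaneously sends $\proj_{\le n}\IIS(\CURVE)$ to $\proj_{\le n}\IIS(\CURVE')$ for every $n$, provided the level-two projection lies in $\UU$. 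Passing to the inverse limit in $n$ then gives $\IIS(A\CURVE)=\IIS(\CURVE')$.

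The hard part splits in two. The first difficulty is covering paths whose level-two log-signature falls outside $\UU$; here I would attempt either a limiting argument — approximating exceptional signatures by generic ones and exploiting continuity of the group action — or a Fels--Olver-style covering by additional cross-sections adapted to the exceptional orbit types. The second, and far more serious, difficulty is the case of non-reductive $G\subset\SL_d(\R)$, where Hilbert's theorem fails (Nagata) and polynomial invariants may genuinely not separate orbits at a fixed truncation level. This is almost certainly the main obstacle and the reason the statement remains conjectural: resolving it presumably requires exploiting the fact that the \emph{full} tower of polynomial invariants across all levels encodes strictly more information than any finite truncation, so that orbits whose closures would merge at every level $n$ are nevertheless distinguished in the limit — likely via a Tannakian or pro-algebraic reformulation of the signature image.
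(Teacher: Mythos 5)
What you have been asked to prove is stated in the paper only as a \emph{conjecture}; the paper contains no proof of it (the authors explicitly defer a proof for compact $G$ to work in progress elsewhere, and in this paper only establish a constructive variant for $d=2,3$ on the generic locus, via \Cref{thm:MainThm} and \Cref{thm:IISequiv}). Your proposal is accordingly a plan rather than a proof, and you are right that the two obstacles you flag are genuine: the exceptional locus where $\cctwo(\CURVE)\notin\mathcal U_{d;\le 2}$, and non-compact $G$. Let me add a third gap that your sketch glosses over. The hypothesis only gives agreement of the \emph{linear} invariants, i.e.\ of $\langle\IIS(\cdot),\varphi\rangle$ for those $\varphi$ with $\tilde\phi_{A^\top}\varphi=\varphi$. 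To convert a polynomial $G$-invariant of $\proj_{\le n}\IIS(\CURVE)$ into such a $\varphi$ you shuffle it into a single tensor $\psi$; but $\psi$ need not itself be fixed by every $\tilde\phi_{A^\top}$ — it only induces an invariant \emph{function} on the (group-like) signatures. For compact $G$ one repairs this by Haar-averaging $\psi$, which does not change the induced function; for general $G\subset\SL_d(\R)$ this step already fails, independently of the orbit-separation issue you discuss. So even the translation of the hypothesis into ``all polynomial invariants of the truncations agree'' silently uses compactness.

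For compact $G=\OdR$ and paths with $\cctwo(\CURVE),\cctwo(\CURVE')\in\mathcal U_{d;\le 2}$, your chain of reductions is essentially the one the paper's machinery is built to support: polynomial invariants of a compact group acting linearly on $\lie{n}{d}$ separate orbits, the cross-section $\K_{d;\le n}$ is cut out by level-two conditions so the moving frame — and hence the single element $A=\rhod(\cctwo(\CURVE'))^{-1}\rhod(\cctwo(\CURVE))$ — is independent of $n$, and Hambly--Lyons converts equality of full signatures into tree-like equivalence after transformation; this is \Cref{thm:IISequiv} combined with \Cref{prop:invariantFields}. But your proposed fixes for the degenerate locus (a limiting argument, or auxiliary cross-sections) are not carried out and are not routine: on that locus the action is no longer free and the orbit dimension drops, so no single cross-section of the generic codimension can work, and continuity arguments must contend with the fact that the choice of $A_n$ is not unique along the approximating sequence. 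And for non-reductive $G$ your closing paragraph is, as you concede, speculation. In short: a correct identification of the architecture of a possible proof for compact groups on generic paths, together with an accurate list of why the statement is still open — but not a proof, and the paper does not contain one either.
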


While a proof of this conjecture for any compact group $G$ is finished and part of work in progress by J.D., Terry Lyons, Hao Ni and R.P., we are here interested in a 'constructive' proof which leads to an algorithm for the computation of a minimal set of polynomial seperating orbits. The following definition and proposition now provide a sufficient condition for a moving frame to lead to a fundamental set of invariants consisting only of polynomial invariants.

\begin{definition}
 A moving frame $\varrho:\,U\to G$ for the action of $G$ on $U$, where $U$ is a non-empty, $G$-invariant, semialgebraic subset of $\lie{n}d$, is called \textbf{almost-polynomial}, 
if there are %
maps $\lambda:\,U\to\GL_d(\R)$ and $\kappa:\,\lie{n}d\to\GL_d(\R)$, where $\lambda$ is $G$-invariant, such that $\lambda(\ccn)$ is diagonal for all $\ccn\in U$, such that $\lambda_{ii}\varrho_{ij}\in\R[\lie{n}d]$ for all $i,j=1,\dots ,d$ and
\begin{equation}\label{eq:lambdakappa}
      \lambda(\ccn)=\kappa\big(\lambda(\ccn)\varrho(\ccn)\cdot\ccn\big)
\end{equation}
for all $\ccn\in U$.

\begin{remark}
 Equation \eqref{eq:lambdakappa} may seem odd as an asumption at first sight, however it is necessary for the coordinates of $\lambda(\ccn)\varrho(\ccn)\cdot\ccn$ to form a fundamental set of invariants: Since we assume $\lambda$ to be an invariant function it must functionally depend on any fundamental set of invariants.
\end{remark}

\begin{example}\label{ex:almost-polynomial}
Looking at the example $\rhotwon$, we introduce
\begin{equation*}
    \lambda_2(\ccn)=
    \begin{bmatrix}
        |c_{12}|\sqrt{c_{1}^2+c_{2}^2} & 0\\
        0 & \sqrt{c_{1}^2+c_{2}^2}
    \end{bmatrix}.
\end{equation*}
As we see in Section \ref{sec:inv.planar}, $\lambda_2$ is invariant under $\O_2(\R)$. Then,
\begin{equation*}
    \lambda_2(\ccn)\rhotwon(\ccn)=
    \begin{bmatrix}
        c_{12}c_{2} & -c_{12}c_1\\
        c_1 & c_2
    \end{bmatrix},
\end{equation*}
whose entries are polynomial functions. In particular, the non-zero coordinates of $\lambda_2(\ccn)\rho_2(\ccn)\cdot\ccn$ are given by
\begin{equation*}
    \hat c_{2}:=c_{1}^2+c_{2}^2,\quad \hat c_{12}:=c_{12}^2(c_{1}^2+c_{2}^2)
\end{equation*}
We finally obtain $\lambda_2(\ccn)=\kappa_2(\lambda_2(\ccn)\varrho(\ccn)\cdot\ccn)$ via
\begin{equation*}
    \kappa_2(\hat{\mathbf{c}}_{\le n})=
    \begin{bmatrix}
        \sqrt{\hat c_{12}} & 0\\
        0 & \hat c_{2}
    \end{bmatrix},
\end{equation*}
showing that $\rho_2$ is an almost-polynomial moving frame.
\end{example}

\end{definition}
\begin{proposition}\label{prop:almost-polynomial}
If $\varrho:\,U\to G$ is an almost-polynomial moving frame for the action of $G$ on $U$, 
then the non-zero components of $\lambda(\ccn)\varrho(\ccn)\cdot\ccn$ form a fundamental set of invariants consisting only of polynomial invariants.
\end{proposition}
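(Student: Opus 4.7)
Write $\mu(\ccn) := \lambda(\ccn)\varrho(\ccn)\cdot\ccn$. The plan is to verify three properties of its coordinates in turn: (a) they are polynomial in $\ccn$, (b) they are $G$-invariant, and (c) they functionally determine the classical moving-frame invariants.

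For (a), the first step is to observe that the constructions of \Cref{prp:fext} and the proofs of \Cref{lem:Ashuffle} and \Cref{crl:equivariant}(ii) extend from orthogonal matrices to arbitrary $B\in\GL_d(\R)$ without change, so that the action on $T(\R^d)$ is defined for any invertible matrix and preserves $\lie{n}d$ (since it preserves the grading by word length). In the Lyndon basis the coefficient of $b_h$ in $B\cdot\ccn$ is then, by the diagonal-action formula, a polynomial of degree $|h|$ in the entries of $B$ and of degree one in the coordinates of $\ccn$. Because $\lambda$ is diagonal, $(\lambda(\ccn)\varrho(\ccn))_{ij}=\lambda_{ii}(\ccn)\varrho_{ij}(\ccn)\in\R[\lie{n}d]$ by almost-polynomiality, and substituting $B=\lambda(\ccn)\varrho(\ccn)$ yields polynomial coordinates for $\mu(\ccn)$. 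For (b), combining the $G$-invariance of $\lambda$ with the moving-frame identity $\varrho(A\cdot\ccn)=\varrho(\ccn)A^{-1}$ from \Cref{def:mf} and the functoriality $(BC)\cdot\ccn=B\cdot(C\cdot\ccn)$ of the extended action gives, for any $A\in G$,
$$\mu(A\cdot\ccn) = \lambda(\ccn)\varrho(\ccn)A^{-1}\cdot(A\cdot\ccn) = \lambda(\ccn)\varrho(\ccn)\cdot\ccn = \mu(\ccn).$$

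For (c), I would use the standard fact (reviewed after \Cref{def:free}) that the coordinates of $\varrho(\ccn)\cdot\ccn$ are the fundamental moving-frame invariants on $U$, so it suffices to recover them from $\mu(\ccn)$. By functoriality $\mu(\ccn)=\lambda(\ccn)\cdot(\varrho(\ccn)\cdot\ccn)$, and by hypothesis \eqref{eq:lambdakappa} we have $\lambda(\ccn)=\kappa(\mu(\ccn))$. Therefore
$$\varrho(\ccn)\cdot\ccn = \kappa(\mu(\ccn))^{-1}\cdot\mu(\ccn),$$
expressing the classical moving-frame invariants as functions of the coordinates of $\mu(\ccn)$, which thus also form a fundamental set. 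Finally, a diagonal matrix acts on $b_h$ with $h=\w{i}_1\cdots\w{i}_{|h|}$ by the nonzero scalar $\prod_k\lambda_{i_k i_k}$, so the zero coordinates of $\mu(\ccn)$ coincide with the phantom ones of $\varrho(\ccn)\cdot\ccn$ and can be discarded.

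The main obstacle is the polynomiality step (a): the paper only defines the ambient action for $G=\OdR$, so one needs to unpack \Cref{prp:fext} and \Cref{lem:Ashuffle} carefully to verify both that they extend to $\GL_d(\R)$ and that the degree bookkeeping with respect to the Lyndon basis works as claimed. The invariance and fundamentality arguments are then essentially formal consequences of \Cref{def:mf} together with the identity \eqref{eq:lambdakappa}.
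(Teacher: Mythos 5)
Your proof is correct and follows essentially the same route as the paper's: polynomiality from the fact that $\lambda_{ii}\varrho_{ij}$ is polynomial and the action is polynomial in the matrix entries, invariance from the $G$-invariance of $\lambda$ together with the equivariance of $\varrho$, and fundamentality by recovering $\varrho(\ccn)\cdot\ccn=\kappa(\mu(\ccn))^{-1}\cdot\mu(\ccn)$ via \eqref{eq:lambdakappa}. You simply spell out the details (the extension of the diagonal action to $\GL_d(\R)$ and the discarding of the identically zero coordinates) that the paper treats as obvious.
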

\begin{proof}
 Obviously $\ccn\mapsto\lambda(\ccn)\varrho(\ccn)\cdot\ccn$ is a polynomial map on $U$ since $\lambda_{ii}\mu_{ij}$ is polynomial. The components of $\varrho(\ccn)\cdot\ccn$ form a fundamental set of invariants since $\varrho$ is a moving frame which implies that $I(\ccn)=I(\varrho(\ccn)\cdot\ccn)$ for any invariant function $I:\,U\to\mathbb{R}$. Since $\lambda$ is $G$ invariant by assumption, we have that the components of $\lambda(\ccn)\varrho(\ccn)\cdot\ccn$ are invariants. Furthermore
 \begin{equation*}
     \varrho(\ccn)\cdot\ccn=\kappa(\lambda(\ccn)\varrho(\ccn)\cdot\ccn)^{-1}\lambda(\ccn)\varrho(\ccn)\cdot\ccn,
 \end{equation*}
implies that the non-zero components of $\lambda(\ccn)\varrho(\ccn)\cdot\ccn$ form a fundamental set of invariants, too.
\end{proof}

We formulate the following conjecture for the moving frame defined in the previous section.

\begin{conjecture}\label{conj:almostpoly}
For any $d\geq 2$, the moving frame $\rhodn:\,\UU\to\OdR$ is almost-polynomial.
\end{conjecture}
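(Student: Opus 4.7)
The plan is to realise $\rhodn$ explicitly as a (skew) Lanczos-type orthogonalisation of the Krylov sequence $v, Mv, M^2v,\dotsc, M^{d-1}v$ where $(v,M) = \proj_{\le 2}\ccn$, and to read off both $\lambda$ and $\kappa$ from the denominators appearing in the normalisation. By \Cref{lem:vMform}, at a point of $\CROSS$ the vector $M^{k-1}v$ lies in $\operatorname{span}(e_{d-k+1},\dotsc,e_d)$; hence, after a sign adjustment to enforce the positivity conditions $c_d>0$ and $c_{i(i+1)}>0$, the $(d-k+1)$-th row of $\rhodn$ is (up to sign) the normalised Gram--Schmidt vector $u_{d-k+1}^{\top}$ of this Krylov sequence.

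The relevant invariants are the Gram determinants $g_k \coloneq \det(\langle M^iv, M^jv\rangle)_{0\le i,j<k}$, with $g_0\coloneq 1$. Using $M^{\top}=-M$, $\langle M^iv, M^jv\rangle = (-1)^i \langle v, M^{i+j}v\rangle$, which vanishes for $i+j$ odd and equals $\pm\|M^{(i+j)/2}v\|^2$ for $i+j$ even; hence each $g_k$ is a polynomial in the $\OdR$-invariants $\DOT{M^jv}{M^jv}$ of \Cref{prop:invariantFields}, so $g_k\in\R[\lie{n}d]^{\OdR}$ and $g_k>0$ on $\UU$. The classical Lanczos identity $\|\tilde u_{d-k+1}\|^2 = g_k/g_{k-1}$ controls the square-root denominators in $\rhodn$, while skewness of $M$ forces the diagonal Lanczos coefficients $\langle Mu_{d-k+1},u_{d-k+1}\rangle$ to vanish, reducing the orthogonalisation to a genuine three-term recurrence. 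The resulting polynomial denominators of the orthogonalised vectors are then expected to be monomials in $g_1,\dotsc,g_{k-1}$ (verified for $d\le 3$ directly, recovering \Cref{ex:almost-polynomial} for $d=2$).

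With this in hand, writing each orthonormalised vector as $u_{d-k+1} = \hat w_k/(p_k\sqrt{g_k/g_{k-1}})$ with $\hat w_k\in(\R[\lie{n}d])^d$ and $p_k\in\R[\lie{n}d]^{\OdR}$ both polynomial, I would define
\[
    \lambda(\ccn) \coloneq \operatorname{diag}(\ell_1,\dotsc,\ell_d),\qquad \ell_{d-k+1} \coloneq p_k\sqrt{g_k/g_{k-1}}.
\]
Each $\ell_{d-k+1}$ is then $\OdR$-invariant by construction, and $\ell_{d-k+1}\,(\rhodn)_{d-k+1,j} = \pm(\hat w_k)_j$ is polynomial in $c$, as required. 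For $\kappa$, the non-zero $\lie{2}d$-coordinates of the invariantised point $\lambda(\ccn)\rhodn(\ccn)\cdot\ccn$ are $\hat c_d = \ell_d^2 = g_1$ and $\hat c_{(d-k)(d-k+1)} = \ell_{d-k}\ell_{d-k+1}\gamma_k$, where $\gamma_k$ is the (positive) off-diagonal Lanczos coefficient and is an explicit rational function of $g_0,\dotsc,g_{k+1}$. These equations allow successive elimination to recover each $g_k$ (and hence each $\ell_i$) from the $\hat c$'s, giving an explicit $\kappa$ satisfying \eqref{eq:lambdakappa}.

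The central technical obstacle is verifying that the polynomial factor $p_k$ is indeed a monomial in $g_1,\dotsc,g_{k-1}$ for general $k$: one expects an induction exploiting the three-term Lanczos recurrence together with the parity vanishing $\langle M^av, M^bv\rangle = 0$ for $a+b$ odd, showing that the denominators introduced at each Gram--Schmidt step are absorbed into previously-controlled $g_j$'s. Once this structure is established, the remaining verifications --- polynomiality of $\lambda\rhodn$, $\OdR$-invariance of $\lambda$, and the consistency identity \eqref{eq:lambdakappa} --- are essentially formal consequences of the Lanczos framework.
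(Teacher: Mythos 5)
This statement is an open \emph{conjecture} in the paper: the authors verify it only for $d=2$ (\Cref{ex:almost-polynomial}, \Cref{thm:polinvariants2d}) and $d=3$ (\Cref{sec:inv.spatial}) by exhibiting $\lambda$ and $\kappa$ explicitly, and state that the case $d>3$ remains open. So there is no proof in the paper to measure yours against; what follows is an assessment of your argument on its own terms. Your strategy --- realising $\rhodn$ as the skew-Lanczos tridiagonalisation of the Krylov sequence $v,Mv,\dotsc,M^{d-1}v$ and building $\lambda$ from the Gram determinants $g_k$ --- is, I believe, the right one and would settle the conjecture.

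As written, however, the argument is incomplete exactly where you say it is: you reduce everything to the claim that the denominator $p_k$ of the $k$-th orthogonalised vector is a monomial in $g_1,\dotsc,g_{k-1}$, and only ``expect'' this from an induction you do not carry out. The good news is that no induction is needed if you use the bordered-Gram-determinant form of Gram--Schmidt instead of the recursive form. With $x_j\coloneq M^{j-1}v$, set
\[
 W_k \coloneq \det\begin{pmatrix}\langle x_1,x_1\rangle&\cdots&\langle x_1,x_{k-1}\rangle&x_1\\ \vdots& &\vdots&\vdots\\ \langle x_k,x_1\rangle&\cdots&\langle x_k,x_{k-1}\rangle&x_k\end{pmatrix},
\]
expanded along the last column. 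Its entries are manifestly polynomials in the level-$\le2$ coordinates of $\ccn$, it is orthogonal to $x_1,\dotsc,x_{k-1}$, its $x_k$-coefficient is $g_{k-1}$, and $\|W_k\|^2=g_{k-1}g_k$. Hence $u_{d-k+1}=W_k/\sqrt{g_{k-1}g_k}$, and you may simply take $\ell_{d-k+1}\coloneq\sqrt{g_{k-1}g_k}$: then $\ell_{d-k+1}(\rhodn)_{d-k+1,j}=(W_k)_j$ is polynomial, and $\ell_{d-k+1}$ is $\OdR$-invariant because, by your parity observation, each $g_k$ is a polynomial in the generators $\DOT{M^jv}{M^jv}$ of \Cref{prop:invariantFields}. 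That $g_k>0$ on $\UU$ for all $k\le d$ (so that $\lambda$ lands in $\GL_d(\R)$) also needs a word: on $\CROSS\cap\UU$ the Krylov matrix is anti-triangular by \Cref{lem:vMform} with corner entries $c_d\prod_i\m_{(d-i)(d-i+1)}\neq0$, and one concludes by invariance of $g_k$ and of $\UU$.

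Two further points should be made explicit rather than assumed. First, that the Lanczos matrix \emph{is} $\rhodn$ follows from the uniqueness in \Cref{thm:MainThm} once you check it maps $(v,M)$ into $\CROSS$; the positivity conditions hold automatically with the standard normalisation ($\langle u_d,v\rangle=\|v\|>0$ and $u_{d-k}^{\top}Mu_{d-k+1}=\sqrt{g_{k-1}g_{k+1}}/g_k>0$), so the ``sign adjustment'' you invoke is in fact unnecessary. Second, the identity \eqref{eq:lambdakappa} should be computed, not asserted: with the above $\lambda$ one finds $\hat c_d=\ell_d\sqrt{g_1}=g_1$ and $\hat c_{(d-k)(d-k+1)}=\ell_{d-k}\ell_{d-k+1}\cdot\sqrt{g_{k-1}g_{k+1}}/g_k=g_{k-1}g_{k+1}$, so the $g_k$, and hence $\lambda$, are recovered recursively from the invariantised coordinates, which defines $\kappa$. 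With these additions your argument upgrades the conjecture to a theorem for all $d\ge2$.
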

We will see in the next section that this conjecture at least also holds true for $d=3$, however, the conjecture remains open for $d>3$. This in particular means that we have our 'constructive' proof for Conjecture \ref{conj:diehlreizenstein} restricted to paths $\CURVE$ such that $\ccn(\CURVE)\in\UU$ in the special case of $\O_d(\R)$ for $d=2,3$. We hope to extend this result to all dimensions, all paths and to further compact groups in future work.

\section{Invariants of planar and spatial curves}
\label{sec:inv.planar}

\subsection{Planar curves}
In Section \ref{sec:MF2} we detailed a moving frame construction for $\lie{n}2$ under $\O_2(\R)$ for any truncation order $n$. In particular on the subset
$$
\mathcal U_{2; \le n} = \left\{ \ccn\in \lie{n}2 \,|\, (c_{1},c_{2}) \neq (0,0), c_{12} \neq 0\right\},
$$
the map $\rhotwon : \mathcal U_{2;\le n} \rightarrow \O_2(\R)$ defined by $\rhotwon(\ccn)=\mu_2(\ccn)\nu_2(\ccn)$ for $\ccn\in \lie{n}2$, where
$$
\mu_2(\ccn) = \frac{1}{\sqrt{c_{1}^2+c_{2}^2}}
\begin{bmatrix}
    \text{sgn}(c_{12}) & 0\\
    0 & 1
\end{bmatrix}, \quad
\nu_2(\ccn)=
\begin{bmatrix}
  c_{2} & - c_{1}\\
  c_{1} & c_{2}
  \end{bmatrix},
$$
 is a moving frame map, bringing any element of $\lie{n}2$ to the intersection of its orbit with the cross-section (see \Cref{fig:area})
$$
\K_{2;\le n} = \left\{\ccn \in \lie{n}2\, |\, c_{1} = 0, c_{2}, c_{12} >0\right\}.
$$
\begin{figure}[!ht]
    \centering
    \includegraphics[width=0.6\textwidth]{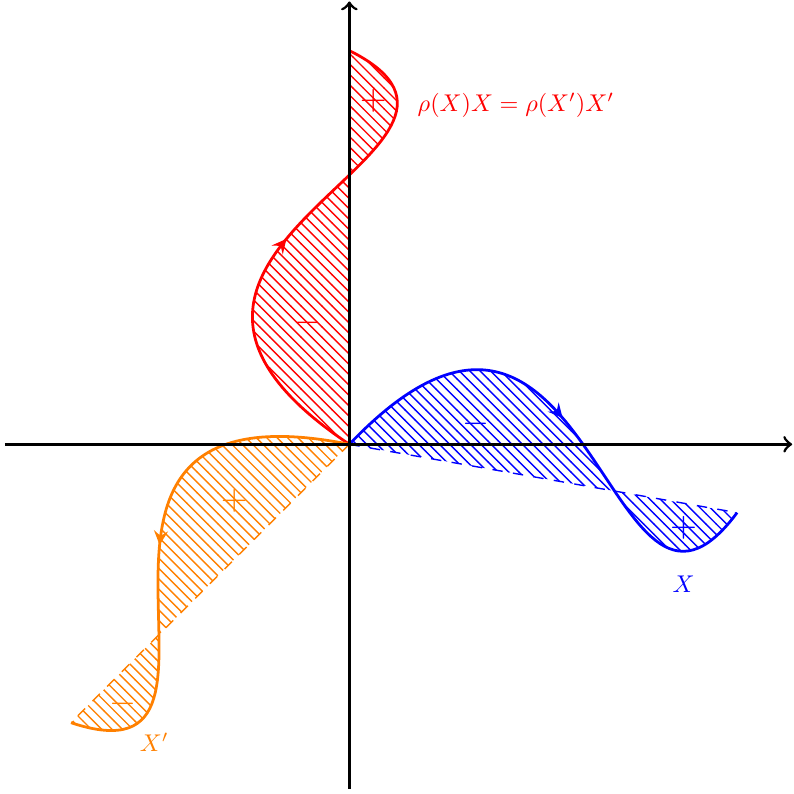}
    \caption{Applying the moving-frame map for planar curves to two paths \mbox{$X$ and $X'$} lying on the same $\mathrm{O}_2(\R)$ orbit}
    \label{fig:area}
\end{figure}

Any path $\CURVE$ in $\R^2$ defines an element $\ccn(\CURVE) = \proj_{\le n}(\log(\IIS(\CURVE)))\in \lie{n}2$. Since $\rhotwon(\ccn) = \rhotwo(\cctwo)$, we can define the invariantization of $\CURVE$ with respect to $\O_2(\R)$ as $Y := \rhotwo(\cctwo(\CURVE))\cdot \CURVE$. The coordinates of $\log(\IIS(Y))$ as functions of the coordinates of $\log(\IIS(\CURVE))$ are invariant functions for paths under $\O_2(\R)$.

A (Lyndon) basis for $\lie{4}2$ corresponds to the coordinates (see \Cref{ex:lyndon})
$$
\cc_4 = (c_{1}, c_{2}, c_{{12}}, c_{112},c_{122},c_{1112},c_{1122},c_{1222}).
$$
As detailed before in Section \ref{sec:MF2}, we have that
$$
c_{1}(Y) = 0, \quad c_{2}(Y) = \sqrt{c_{1}(\CURVE)^2+c_{2}(\CURVE)^2}, \quad c_{12}(Y) = |c_{12}(\CURVE)|.
$$
Using the action as defined in Section \ref{ss:invariants}, one can compute
\begin{align*}
c_{112}(Y) &=\frac{c_{1}(\CURVE)c_{122}(\CURVE)+c_{112}(\CURVE)c_{2}(\CURVE)}{\sqrt{c_{1}(\CURVE)^2+c_{2}(\CURVE)^2}}  \\
 c_{122} (Y)&= \text{sgn}(c_{12})\left(\frac{-c_{1}(\CURVE)c_{112}(\CURVE)+c_{122}(\CURVE)c_{2}(\CURVE)}{\sqrt{c_{1}(\CURVE)^2+c_{2}(\CURVE)^2}}\right) \\
c_{1112}(Y) &= \text{sgn}(c_{12})\left(\frac{c_{1}(\CURVE)^2c_{1222}(\CURVE)+c_{1}(\CURVE)c_{2}(\CURVE)c_{1122}(\CURVE)+c_{2}(\CURVE)^2c_{1112}(\CURVE)}{c_{1}(\CURVE)^2+c_{2}(\CURVE)^2}\right)\\
 c_{1122}(Y)&=\frac{-c_{1}(\CURVE)^2c_{1122}(\CURVE)+2c_{1}(\CURVE)c_{2}(\CURVE)(c_{1222}(\CURVE)-c_{1112}(\CURVE))+c_{2}(\CURVE)^2c_{1122}(\CURVE)}{c_{1}(\CURVE)^2+c_{2}(\CURVE)^2}\\
  c_{1222}(Y)&= \text{sgn}(c_{12})\left(\frac{c_{1}(\CURVE)^2c_{1112}(\CURVE)-c_{1}(\CURVE)c_{2}(\CURVE)c_{1122}(\CURVE)+c_{2}(\CURVE)^2c_{1222}(\CURVE)}{c_{1}(\CURVE)^2+c_{2}(\CURVE)^2}\right).
\end{align*}

As before, for any two paths $\CURVE$ and $\tilde{\CURVE}$ starting at the origin, we have that $\cc_4(\CURVE)$ is related to $\cc_4(\tilde{\CURVE})$ under $\O_2(\R)$ if and only if $\cc_4(Y) =\cc_4(\tilde{Y})$. By inspection, we see that a simpler set of \textit{polynomial} invariants also determine the equivalence class of the image of a path $\CURVE$ in $\lie{4}2$.

\begin{align*}
p_1(\CURVE) &= c_{1}(\CURVE)^2+ c_{2}(\CURVE)^2\\
p_2(\CURVE) &= c_{12}(\CURVE)^2\\
p_3(\CURVE) &= c_{1}(\CURVE)c_{122}(\CURVE)+c_{112}(\CURVE)c_{2}(\CURVE)\\
p_4(\CURVE) & = c_{12}(\CURVE)\left(-c_{1}(\CURVE)c_{112}(\CURVE)+c_{122}(\CURVE)c_{2}(\CURVE)\right)\\
p_5(\CURVE) & =  c_{12}(\CURVE)\left(c_{1}(\CURVE)^2c_{1222}(\CURVE)+c_{1}(\CURVE)c_{2}(\CURVE)c_{1122}(\CURVE)+c_{2}(\CURVE)^2c_{1112}(\CURVE)\right)\\
p_6(\CURVE) &= -c_{1}(\CURVE)^2c_{1122}(\CURVE)+2c_{1}(\CURVE)c_{2}(\CURVE)(c_{1222}(\CURVE)-c_{1112}(\CURVE))+c_{2}(\CURVE)^2c_{1122}\\
p_{7}(\CURVE) & = c_{12}(\CURVE)\left(c_{1}(\CURVE)^2c_{1112}(\CURVE)-c_{1}(\CURVE)c_{2}(\CURVE)c_{1122}(\CURVE)+c_{2}(\CURVE)^2c_{1222}(\CURVE)\right)
\end{align*}

The value of $\CURVE$ on the above invariant set determines the value of $\cc_4(Y)$. Thus they provide a simpler invariant representation for $\cc_4(\CURVE) = \proj_{\le 4}(\log(\IIS(\CURVE)))$.

\begin{remark}
 It is an interesting fact that by adding the invariants $c_{1112}(Y)$ and $c_{1222}(Y)$, we get the much simpler invariant
 \begin{equation*}
  c_{1112}(Y)+c_{1222}(Y)=\text{sgn}(c_{12})(c_{1112}(\CURVE)+c_{1222}(\CURVE)).
 \end{equation*}
 In the polynomial invariant set, one can likewise replace either $p_4$ or $p_{7}$ by
 \begin{equation*}
  p_4'(\CURVE)=c_{12}(\CURVE)(c_{1112}(\CURVE)+c_{1222}(\CURVE)).
 \end{equation*}
\end{remark}

We already know from Example \ref{ex:almost-polynomial} in combination with Proposition \ref{prop:almost-polynomial} that $\rhotwon$ is an almost-polynomial moving frame and thus leads to a fundamental set of polynomial invariants.
However, we prove here the following stronger statement with a slightly different construction on the level of the individual coordinates of the first kind $c_{h}$. 

\begin{theorem}\label{thm:polinvariants2d}
 There exists a set of polynomials $\pol_h$ and a polynomial map $\polmap:\,\lie{n}2\to\lie{n}2$ which is bijective when restricted to $\mathcal{U}_{2;\le n}$ such that
 \begin{equation*}
  q(\ccn(\rhotwon(Z)\cdot Z))=(\pol_h(\ccn(\CURVE)))_h.
 \end{equation*}
 Thus, the $\pol_h(\ccn)$ form a set of polynomial invariants determining the equivalence class of a path $\CURVE$ in $\lie{n}2$.
\end{theorem}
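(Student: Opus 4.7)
The plan is to construct the polynomials $\pol_h$ componentwise in the Lyndon basis of $\lie{n}2$ by multiplying each invariantized coordinate $c_h(\rhotwon(Z)\cdot Z)$ by an appropriate $\O_2(\R)$-invariant monomial in $c_2$ and $c_{12}$, chosen so as to absorb every $\sqrt{c_1(Z)^2+c_2(Z)^2}$-denominator and every $\text{sgn}(c_{12}(Z))$-factor produced by the moving frame.

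First, I would establish the structural identity: for every Lyndon word $h$ of length $m\ge 2$ there exist $\varepsilon(h)\in\{0,1\}$ and a polynomial $P_h$ in the coordinates of $\ccn$ such that
\[
  c_h(\rhotwon(Z)\cdot Z)\;=\;\text{sgn}(c_{12}(Z))^{\varepsilon(h)}\,\frac{P_h(\ccn(Z))}{\sqrt{c_1(Z)^2+c_2(Z)^2}^{\,m-2}},
\]
the level-one cases being $c_1(\rhotwon(Z)\cdot Z)=0$ and $c_2(\rhotwon(Z)\cdot Z)=\sqrt{c_1(Z)^2+c_2(Z)^2}$. The denominator exponent $m-2$, rather than the naive $m$ coming from the $m$-fold tensor power of $\rhotwon(Z)$, is consistent with the explicit level-$3$ and level-$4$ computations displayed before the theorem, and I would prove it by induction on $m$ using the multiplicativity \eqref{eq:phiconc} of the action on the standard factorization $h=uv$, together with the cancellation of $(c_1^2+c_2^2)$-factors occurring when one restricts from the full tensor algebra to the free Lie subalgebra. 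The parity $\varepsilon(h)$ is determined by the number of \w{1}-letters in $h$ modulo $2$.

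Next, define the polynomial map $\polmap\colon\lie{n}2\to\lie{n}2$ componentwise by setting
\[
  \polmap_1(\ccn)\coloneq c_1,\qquad \polmap_2(\ccn)\coloneq c_2^2,\qquad \polmap_h(\ccn)\coloneq c_h\cdot c_2^{\,m-2}\cdot c_{12}^{\varepsilon(h)}\quad\text{for}\ |h|=m\ge 2.
\]
This is polynomial by construction, and evaluating at $\ccn(Y)\coloneq\ccn(\rhotwon(Z)\cdot Z)$ and using $c_2(Y)=\sqrt{c_1(Z)^2+c_2(Z)^2}$ together with $c_{12}(Y)=|c_{12}(Z)|$ yields
\[
  \polmap_h(\ccn(Y))\;=\;\text{sgn}(c_{12}(Z))^{\varepsilon(h)}\,|c_{12}(Z)|^{\varepsilon(h)}\,P_h(\ccn(Z))\;=\;c_{12}(Z)^{\varepsilon(h)}P_h(\ccn(Z))\;=:\;\pol_h(\ccn(Z)),
\]
a polynomial in $\ccn(Z)$. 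Bijectivity of $\polmap$ on the image $\K_{2;\le n}\subset\mathcal{U}_{2;\le n}$ of the moving frame (where by construction $c_2>0$ and $c_{12}>0$) follows from its triangular structure with respect to the level filtration: the level-$\le 2$ part is inverted by extracting the positive square roots of $\polmap_2$ and $\polmap_{12}$, and at each higher level $c_h$ is recovered by dividing $\polmap_h$ by the strictly positive multiplier $c_2^{m-2}c_{12}^{\varepsilon(h)}$; the extension to all of $\mathcal{U}_{2;\le n}$ is a birational bijection, the residual sign ambiguity in the level-$\le 2$ part being absorbed into the $\O_2(\R)$-orbit.

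Finally, bijectivity of $\polmap$ (in the above sense) implies that the tuple $(\pol_h(\ccn(Z)))_h=\polmap(\ccn(Y))$ uniquely determines $\ccn(Y)$, and by \Cref{thm:IISequiv} specialized to $d=2$ this in turn characterizes the $\O_2(\R)$-equivalence class of $\CURVE$ up to tree-like extensions. The principal obstacle is the uniform structural identity in step one, in particular the denominator bound $m-2$ and the closed form for $\varepsilon(h)$; I expect this to follow from an induction along the standard factorization of Lyndon words combined with \eqref{eq:phiconc} and the defining relation $b_h=[b_u,b_v]$, but the bookkeeping for nested Lie brackets is delicate and may require an auxiliary combinatorial or generating-function argument.
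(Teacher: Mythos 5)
Your overall architecture coincides with the paper's: clear the $\sqrt{c_{1}^2+c_{2}^2}$-denominators and $\operatorname{sgn}(c_{12})$-factors from each invariantized coordinate by multiplying with an invariant monomial in $c_{2}(Y)$ and $c_{12}(Y)$ (writing $Y=\rhotwon(\ccn(\CURVE))\cdot\CURVE$), with the sign-exponent given by the parity of the number of \w{1}'s in $h$, and then invert via positive square roots on the cross-section. Your $\varepsilon(h)$ is exactly the paper's $m(h)$, and your treatment of bijectivity (honest only on $\K_{2;\le n}$, where $c_{2},c_{12}>0$ justify the positive roots) is no looser than the paper's own.

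The genuine weakness is that everything rests on your structural identity with denominator exponent $|h|-2$, which you leave unproven and correctly flag as the principal obstacle. That identity is equivalent to the assertion that $c_h\big(\nu_2(\ccn(\CURVE))\cdot\CURVE\big)$ is divisible, as a polynomial, by $c_{1}^2+c_{2}^2$ for every $|h|\ge 2$, where $\nu_2(\ccn)=\left(\begin{smallmatrix}c_{2}&-c_{1}\\ c_{1}&c_{2}\end{smallmatrix}\right)$. This is true — over $\C$ the locus $c_{1}^2+c_{2}^2=0$ is where $\nu_2$ drops to rank $\le 1$, and a rank-$\le 1$ matrix kills every Lie element of degree $\ge 2$, so the polynomial vanishes on both hyperplanes $c_{1}=\pm i c_{2}$ — but your proposed induction along standard factorizations of Lyndon words is precisely the delicate bookkeeping you would still have to carry out. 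The paper sidesteps this entirely: it factors $\rhotwon=\mu_2\nu_2$ with $\mu_2$ diagonal and $\nu_2$ polynomial, and uses that $c_h=\langle\IIS(\cdot),\zeta_h\rangle$ with $\zeta_h$ supported on permutations of $h$, so the diagonal part acts on $c_h$ by the scalar $(\mu_2)_{11}^{n(\w{1},h)}(\mu_2)_{22}^{n(\w{2},h)}$. Choosing the multiplier $c_{12}(Y)^{m(h)}c_{2}(Y)^{|h|}$ — exponent $|h|$ rather than your $|h|-2$ — then gives $\pol_h=c_{12}(\CURVE)^{m(h)}\,c_h\big(\nu_2(\ccn(\CURVE))\cdot\CURVE\big)$, manifestly polynomial, with no divisibility lemma needed. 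So either supply the divisibility argument above to keep your sharper exponent, or relax it to $|h|$ and your proof closes immediately.
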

This theorem is stronger in the sense that the map $q$ relating the two invariant sets is also shown to be polynomial, constrasted with Theorem $\ref{prop:almost-polynomial}$ where did not assume the form of the map
\begin{equation*}
\varrho(\ccn)\cdot\ccn\mapsto\lambda(\ccn)\varrho(\ccn)\cdot\ccn.
\end{equation*}
\begin{proof}
 Let $n(\w{i},w)$ denote the number of times the letter $\w{i}$ appears in the word $w$.
 Since $c_h(\CURVE)=\langle\IIS(\CURVE),\zeta_h\rangle$ for unique $\zeta_h\in T(\mathbb{R}^d)$, where each $\zeta_h$ is a linear combination of permutations of the word $h$, and since $B$ is diagonal, we have
 \begin{equation*}
  c_h(\rhotwon(\ccn(\CURVE))\cdot\CURVE)=(\mu_2)_{11}^{n(\w{1},h)}(\mu_2)_{22}^{n(\w{2},h)}c_h(\nu_2(\ccn(\CURVE)\CURVE).
 \end{equation*}
 Let $m(w)=0$ if $n(\w{1},w)$ is even and $m(w)=1$ if $n(\w{1},w)$ is odd. Then, on $U^2_n$,
 \begin{equation*}
  c_{12}(\rhotwon(\ccn(\CURVE))\cdot\CURVE)^{m(h)}c_{2}(\rhotwon(\ccn(\CURVE))\cdot\CURVE)^{n(\w{1},h)}(\mu_2)_{11}^{n(\w{1},h)}=c_{12}(\CURVE)^{m(h)}
 \end{equation*}
 and 
 \begin{equation*}
  c_{2}(\rhotwon(\ccn(\CURVE))\CURVE)^{n(\w{2},h)}(\mu_2)_{22}^{n(\w{2},h)}=1.
 \end{equation*}
 Thus, since $c_h(C(\CURVE)\CURVE)$ is polynomial in $\cc_n(\CURVE)$, also
 \begin{align*}
  \pol_h(\ccn(\CURVE))&:= c_{12}(\rhotwon(\ccn(\CURVE))\cdot\CURVE)^{m(h)}c_{2}(\rhotwon(\ccn(\CURVE))\cdot\CURVE)^{|h|}c_h(\rhotwon(\ccn(\CURVE))\cdot\CURVE)\\
  &\,\,=c_{12}(\CURVE)^{m(h)}c_h(\nu_2(\ccn(\CURVE))\cdot\CURVE)
 \end{align*}
 is polynomial in $\cc_n(\CURVE)$, and also polynomial in $(c_h(\rhotwon(\ccn(\CURVE))\cdot\CURVE))_h$. Finally, all $c_h(\rhotwon(\ccn(\CURVE))\cdot\CURVE)$ can be retrieved from $(\pol_h(\ccn(\CURVE)))_h$ via
 \begin{equation*}
  c_h(\rhotwon(\CURVE)\cdot\CURVE)=\frac{\pol_h(\ccn(\CURVE))\sqrt{\pol_{{2}}(\ccn(\CURVE))}^{m(h)}}{\sqrt{\pol_{{12}}(\ccn(\CURVE))}^{m(h)}\sqrt{\pol_{{2}}(\ccn(\CURVE)}^{|h|}}.
 \end{equation*}

\end{proof}
We present an example here of how the resulting invariants differ when obtained using the procedures of Proposition \ref{prop:almost-polynomial} and of Theorem \ref{thm:polinvariants2d}. For the non-zero coordinates of $\lambda(\cc_{\le 3})\rhotwon(\cc_{\le 3}\cdot\cc_{\le 3})$, we get
\begin{align*}
    \hat c_{2}&=c_{1}^2+c_{2}^2, \quad\hat c_{12}=c_{12}^2(c_{1}^2+c_{2}^2), \quad\hat c_{112}=c_{12}^2(c_{1}^2+c_{2}^2)(c_{1}c_{122}+c_{112}c_{2})\\
    \hat c_{122}&=c_{12}(c_{1}^2+c_{2}^2)(-c_{1}c_{112}+c_{122}c_{2}),
\end{align*}
while
\begin{align*}
    \pol_{2}(\ccn)&=c_1^2+c_2^2,\quad \pol_{12}(\ccn)=c_{12}^2(c_{1}^2+c_{2}^2),\quad \pol_{112}(\ccn)=(c_{1}^2+c_{2}^2)(c_{1}c_{122}+c_{112}c_{2}),\\
    \pol_{122}&=c_{12}(c_{1}^2+c_{2}^2)(-c_{1}c_{112}+c_{122}c_{2}),
\end{align*}
Thus up to level $4$, they only differ in the $112$ coordinate, which is a bit `simpler' than the resulting invariant using the procedure of Theorem \ref{thm:polinvariants2d}. Looking at the previous $p_i$s we listed, we see how this polynomial set can be further simplified. However, we lack a general algorithm for a `full' simplification of the set of polynomial invariants. This would be achieved if they form a minimal algebra generating set for $\R[\lie{n}2]^{\O_2(\R)}$. This is an interesting investigation for future research.

\subsection{Spatial curves}\label{sec:inv.spatial}
We can define a moving frame similarly for $\lie{n}3$. Theorem \ref{thm:MainThm} shows that the subset of $\lie{n}3$ defined by
$$
\K_{3;\le n} = \left\{c_{1}=c_{2}=c_{13}=0,  c_{3}, c_{12}, c_{23} >0\right\}
$$
is a cross-section for the action of $\O_3(\R)$ on a Zariski-open subset of $\lie{n}3$. Define the following polynomials
in coordinates of $\lie{2}3$,
\begin{align*}
p_1(\ccn) &= c_{1}^2+c_{2}^2+c_{3}^2\\
p_2(\ccn)&= c_{1}^2(c_{12}^2+c_{13}^2) + 2c_{1}c_{23}(c_{13}c_{2}-c_{12}c_{3})+c_{2}^2(c_{12}^2+c_{23}^2)+2c_{12}c_{13}c_{2}c_{3}+c_{3}^2(c_{13}^2+c_{23}^2)\\
p_3(\ccn) &= c_{1}c_{23}-c_{2}c_{13}+c_{3}c_{12}.
\end{align*}

Note that $p_2(\ccn)\geq 0$ for all $\ccn\in\lie{n}3$. There are two ways to see this: First by Cauchy-Schwartz via
\begin{equation*}
 p_2(\ccn)=p_1(\ccn)(c_{12}^2+c_{13}^2+c_{23}^2)-p_3(\ccn)^2=\|v_1\|_2^2\|v_2\|_2^2-(v_1\cdot v_2)^2,
\end{equation*}
where $v_1=(c_{1},c_{2},c_{3})^{\top}$, $v_2=(c_{23},-c_{13},c_{12})^{\top}$. On the other hand, it can also be written as a sum of squares,
\begin{equation*}
 p_2(\ccn)=(c_{12}c_{1}-c_{23}c_{3})^2+(c_{13}c_{1}+c_{23}c_{2})^2+(c_{12}c_{2}+c_{13}c_{3})^2,
\end{equation*}
revealing that it is nothing but $Mv\cdot Mv$ from Example~\ref{ex:generatorsO3field}, while $p_1(\ccn)$ is $v\cdot v$.

Using $p_1(\ccn),p_2(\ccn)\geq 0$, one can check that the group element $\rhothreen(\ccn)\in \O_3(\R)$ defined by
\begin{equation}\label{eq:movingframed3}
\rhothreen(\ccn) = \mu_3(\ccn) \nu_3(\ccn),
\end{equation}
where
$$
\mu_3(\ccn) = \begin{bmatrix}
\frac{\text{sgn}(p_3(\ccn))}{\sqrt{p_1(\ccn)p_2(\ccn)}}&0 &0\\[2ex]
0& \frac{1}{\sqrt{p_2(\ccn)}} &0\\[2ex]
0&0 &\frac{1}{\sqrt{p_1(\ccn)}}
\end{bmatrix}
$$
and
$$
\nu_3(\ccn) =
\scriptsize\begin{bmatrix}
c_{1}(-c_{13}c_{2}+c_{12}c_{3})-c_{23}(c_{2}^2+c_{3}^2) &c_{1}^2c_{13}+c_{1}c_{2}c_{23}+c_{3}(c_{12}c_{2}+c_{13}c_{3}) & -c_{1}^2c_{12}-c_{1}c_{23}c_{3}+c_{2}(c_{12}c_{2}+c_{13}c_{3})\\[1ex]
c_{12}c_{2}+c_{13}c_{3} & -c_{1}c_{12}+c_{23}c_{3} &-c_{1}c_{13}-c_{2}c_{23}\\[1ex]
c_{1} & c_{2} & c_{3}
\end{bmatrix},
$$
brings any element $\ccn\in\lie{n}3$ such that $p_2(\ccn),p_3(\ccn)\neq 0$ to $\mathcal{K}$. In particular the moving frame map $\rhothreen:U_{3; \le n}\rightarrow\O_3(\R)$ is defined on the Zariski-open set $U_{3; \le n} = \left\{p_2(\ccn), p_3(\ccn)\neq 0\right\}\subset \lie{n}3$.
Note that $p_3(\ccn)\neq0$ already implies $p_1(\ccn)\neq0$. Furthermore note that due to the fact that $\rhothreen(\ccn)\in\O_3(\R)$, we have that the sum of the squares of the first row of $\nu_3(\ccn)$ is $p_1(\ccn)p_2(\ccn)$, the sum of the squares of the second row is $p_2(\ccn)$ and the sum of the squares of the third row is $p_1(\ccn)$.

Then for any path $\CURVE\in \R^3$ and $Y :=\rho(\cctwo(\CURVE)) \cdot \CURVE$, the non-zero coordinates of $\cctwo(Y)$ are invariant functions given by%
\footnote{
We note that $p_3(\ccn(\CURVE))$
is the ``signed volume'' of the curve,
compare \cite[Lemma 3.17]{DR2019}.}
\begin{align*}
c_{3}(Y)&=\sqrt{p_1(\cctwo(\CURVE))},\\
c_{12}(Y)&= \frac{|p_3(\cctwo(\CURVE))|}{\sqrt{p_1(\cctwo(\CURVE))},},\\
c_{23}(Y) &= \sqrt{\frac{p_2(\cctwo(\CURVE))}{p_1(\cctwo(\CURVE))}}.
\end{align*}

From this we can conclude that the polynomial invariants $p_1(\ccn(\CURVE)), p_2(\ccn(\CURVE)),$ and $p_3(\ccn(\CURVE))^2$ characterize the equivalence class of $\cctwo(\CURVE)$ under $\O_3(\R)$.

Furthermore, we can, in this special case, determine the \textit{global} form of the $f_i$ from Proposition \ref{prop:ComplexCS}:
\begin{align*}
    f_1(\cctwo)&=\hat{c}_{3}^2=p_1(\cctwo),\\
    f_2(\cctwo)&=\hat{c}_{{23}}^2=\frac{p_2(\cctwo)}{p_1(\cctwo)},\\
    f_3(\cctwo)&=\hat{c}_{{12}}^2=\frac{p_3(\cctwo)^2}{p_1(\cctwo)},
\end{align*}
where $\hat{c}_{3}$, $\hat{c}_{12}$, $\hat{c}_{{23}}$ denote the non-zero components of $\rhothree(\cctwo)\cdot\cctwo$. To show almost-polynomiality, let $\lambda(\ccn)$ be the diagonal matrix with entries
\begin{equation*}
    |p_3(\ccn)|\sqrt{p_1(\ccn)p_2(\ccn)},\quad \sqrt{p_2(\ccn)},\quad\sqrt{p_1(\ccn)}.
\end{equation*}
The function $\lambda_3$ is $\O_3(\R)$-invariant since $p_1$, $p_2$ and $p_3^2$ are invariant. Furthermore $\lambda_3(\ccn)\mu_3(\ccn)$ is diagonal with entries
\begin{equation*}
 p_3(\ccn), \quad 1,\quad 1,
\end{equation*}
and hence  $\lambda_3(\ccn)\rhothreen(\ccn)=\lambda_3(\ccn)\mu_3(\ccn)\nu_3(\ccn)$ is also polynomial in $\ccn$. The non-zero coordinates of $\lambda_3(\ccn)\rho_3(\ccn)\cdot\cctwo$ are then given by
\begin{equation*}
    \hat c_{3}=p_1(\ccn),\quad \hat c_{12}=p_2(\ccn)p_3(\ccn)^2,\quad \hat c_{23}=p_2(\ccn).
\end{equation*}
Thus we have $\lambda_3(\ccn)=\kappa_3(\lambda_3(\ccn)\rho_3(\ccn)\cdot\ccn)$ with $\kappa_3(\hat{\mathbf{c}}_{\leq n})$ as the diagonal matrix with entries
\begin{equation*}
    \sqrt{\hat{c}_{3}\hat{c}_{12}},\quad \sqrt{\hat{c}_{23}},\quad\sqrt{\hat{c}_3}.
\end{equation*}
Hence, $\rhothreen$ is an almost-polynomial moving frame, leading to a fundamental set of polynomial invariants.

\begin{example}
  \label{ex:mfmomentcurve}
  
  Continuing with our running example, the moment curve, we have already seen (\Cref{ex:momentCurve}) that 
	\[\proj_{\le 2}\log\IIS(\CURVE)=
		\left(\begin{bmatrix}1\\1\\1\end{bmatrix},\begin{bmatrix}0&\frac16&\frac14\\-\frac16&0&\frac1{10}\\-\frac14&-\frac1{10}&0\end{bmatrix}\right)
	\]
	The matrix
	\[
		A_1=\begin{bmatrix}
			\frac1{\sqrt{6}}&\frac{1}{\sqrt{6}}&-\sqrt{\frac{2}{3}}\\
			-\frac{1}{\sqrt{2}}&\frac{1}{\sqrt{2}}&0\\
			\frac{1}{\sqrt{3}}&\frac{1}{\sqrt{3}}&\frac{1}{\sqrt{3}}
		\end{bmatrix}
	\]
	is such that
	\[A_1\cdot\proj_{\le 2}\log\IIS(\CURVE)=\left(\begin{bmatrix}0\\0\\\sqrt{3}\end{bmatrix},
			\begin{bmatrix}
				0&\frac{1}{60\sqrt{3}}&\frac{7}{20\sqrt{2}}\\
				-\frac{1}{60\sqrt{3}}&0&-\frac{29}{60\sqrt{6}}\\
				-\frac{7}{20\sqrt{2}}&\frac{29}{60\sqrt{6}}&0
			\end{bmatrix}
		\right).
	\]
	Note that this is an element of $L^{(1)}_3(\R)\subset\LC{3}1$,
	\eqref{eq:Ls}.

	Finally, the matrix
	\[
		A_2=\begin{bmatrix}
			-\frac{29}{2\sqrt{541}}&-\frac{21\sqrt{3}}{2\sqrt{541}}&0\\
			\frac{21\sqrt{3}}{2\sqrt{541}}&-\frac{29}{2\sqrt{541}}&0\\
			0&0&1
		\end{bmatrix}
	\]
	is such that
	\[ A_2\cdot(A_1\cdot \proj_{\le 2}\log\IIS(\CURVE))=\left(\begin{bmatrix}0\\0\\\sqrt{3}\end{bmatrix},
			\begin{bmatrix}
				0&\frac{1}{60\sqrt{3}}&0\\
				-\frac{1}{60\sqrt{3}}&0&\frac{\sqrt{541}}{30\sqrt{6}}\\
				0&-\frac{\sqrt{541}}{60\sqrt{6}}&0
			\end{bmatrix}
		\right)\in\K_{3;\le 2}\subsetneq\LC{3}2(\R).
	\]

Figure \ref{fig:moment_curve} shows the effects of these transformations on the path itself.
	\begin{figure}[!ht]
		\centering
		\includegraphics[width=0.5\textwidth]{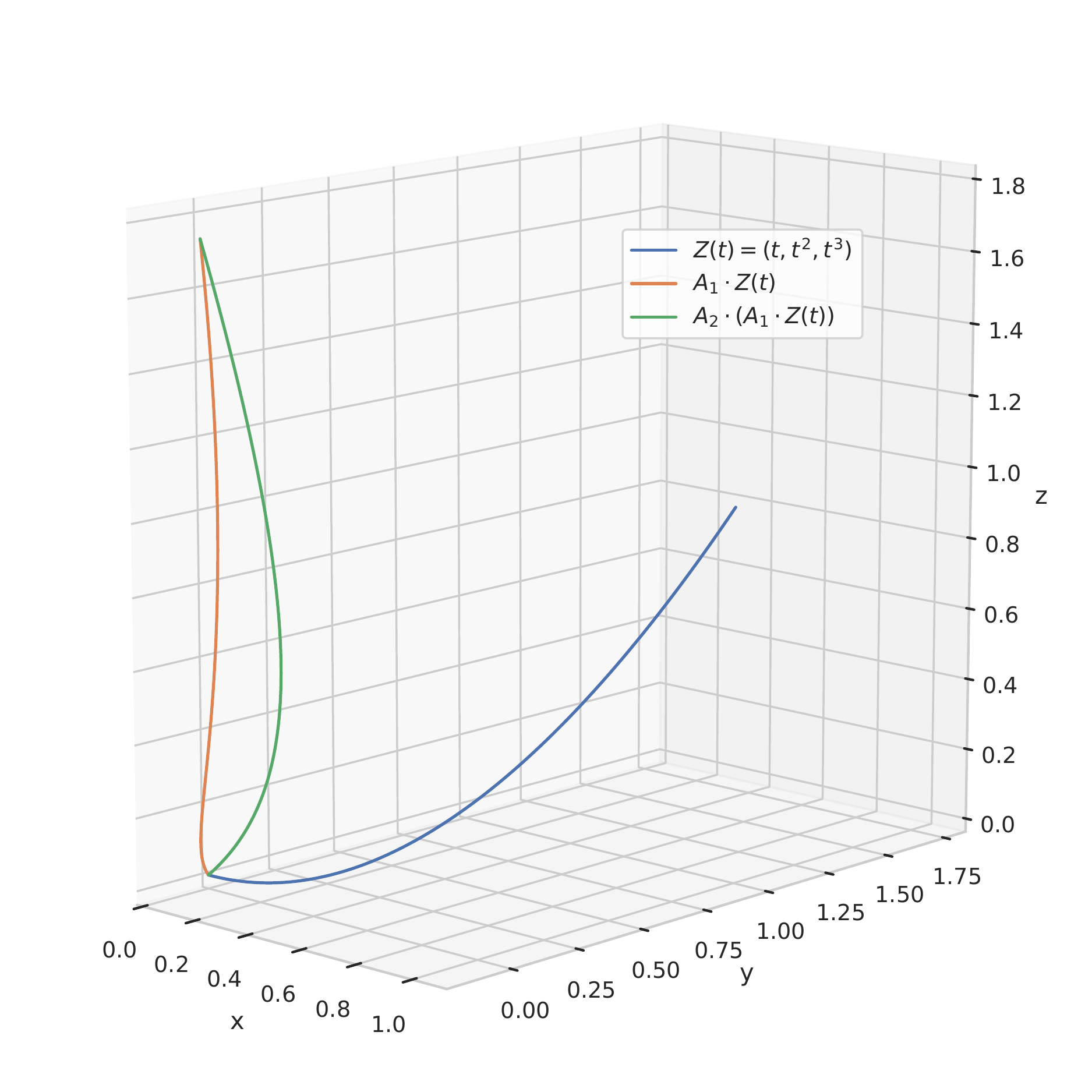}\includegraphics[width=0.5\textwidth]{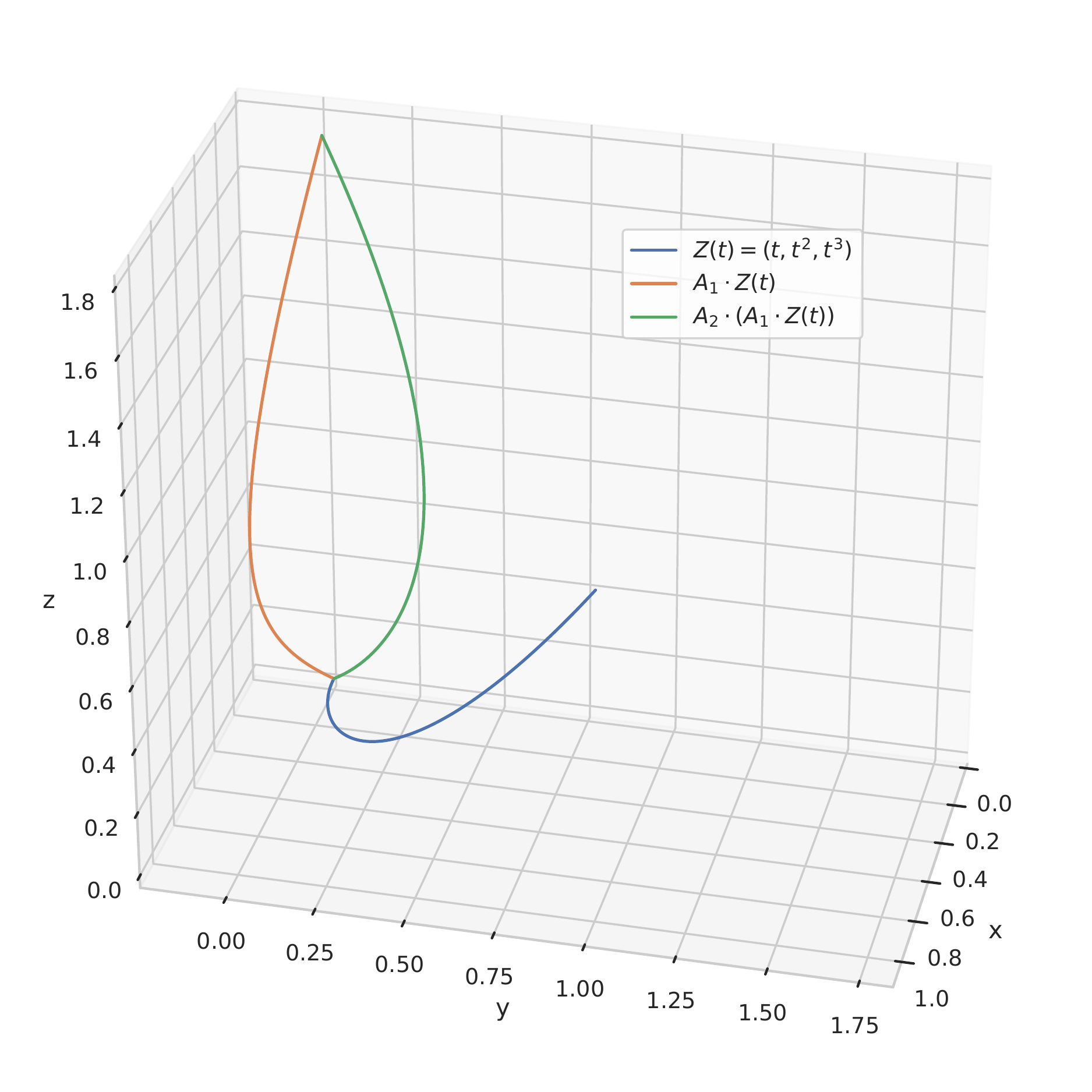}\caption{}
		\label{fig:moment_curve}
	\end{figure}
	
	In this sense, this two step process is similar to the iterative process outlined in the proof of Proposition \ref{prop:ComplexCS} of bringing an element of $\lie{2}d$ to successively smaller linear spaces. The transformation $A_1$ brings $\cctwo(\CURVE)$ to $\LC{3}1(\mathbb{R})\subset\LC{3}1$, then finally to $\K_{3;\le 2}\subsetneq\LC{3}2(\mathbb{R})\subset\LC{3}2$ by a transformation $A_2$. In principle, given a procedure to rotate an element of $\R^d$ to a particular axis, this iterative process is quite easy to perform to bring any $\cctwo(\CURVE)$ for any path $\CURVE$ to $\CROSS$, and hence invariantize any path.

	Alternatively one can directly use the moving frame map in \eqref{eq:movingframed3}; note that this is equivalent to the single action by the matrix
	\[
		\rhothree(\cctwo(\CURVE))=A_2A_1= \begin{bmatrix}
 \frac{17}{\sqrt{3246}} & -23 \sqrt{\frac{2}{1623}} & \frac{29}{\sqrt{3246}} \\
 \frac{25}{\sqrt{1082}} & -2 \sqrt{\frac{2}{541}} & -\frac{21}{\sqrt{1082}} \\
 \frac{1}{\sqrt{3}} & \frac{1}{\sqrt{3}} & \frac{1}{\sqrt{3}}\end{bmatrix}.
\]

	Figure \ref{fig:moment_proj} shows the projection of \(\hat{Z}:=\rhothree(\mathbf c_2(\CURVE))\cdot \CURVE\) onto the \( (x,z)\) plane.
	One can check that the total area under the curve vanishes.
	
	\begin{figure}[!ht]
		\centering
		\vspace{1ex}
		\includegraphics[width=0.87\textwidth]{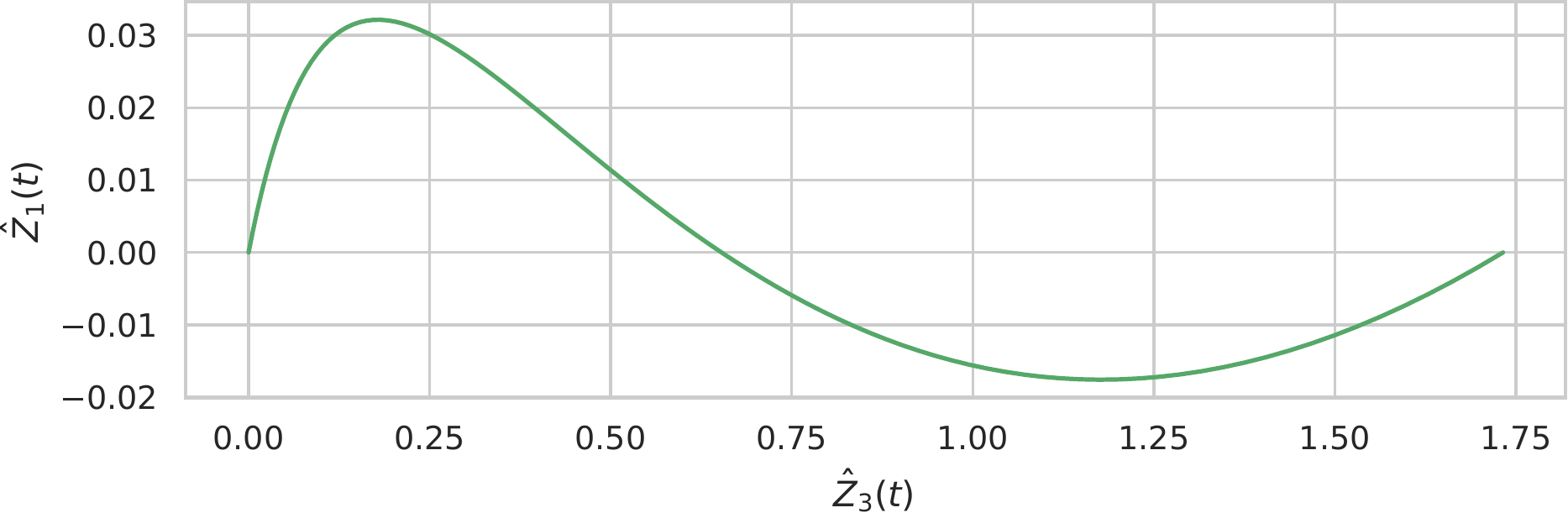}\hspace{10ex}
		\caption{}
		\label{fig:moment_proj}
	\end{figure}
\end{example}

\FloatBarrier

\section{Discussion and open problems}\label{sec:discussion}
We conclude with a discussion of some interesting questions arising from this work. We presented a method to construct $\O_d(\R)$ invariants for a path $\CURVE$ from the coordinates of the log signature (of the iterated-integrals signature) in a way that completely characterizes the orbit of $\proj_n(\log(\IIS(\CURVE))$ (or $\proj_n(\IIS(\CURVE))$) under $\O_d(\R)$. This procedure also furnishes a quick method to compare equivalence classes of paths under $\O_d(\R)$ without computing the full set of invariants (see Example \ref{ex:mfmomentcurve}).

In particular Theorem \ref{thm:IISequiv} is similar in spirit to \cite[Conjecture 7.2]{DR2019}, where the authors characterize all \textit{linear} $\SOdR$-invariants in the coordinates of $\IIS(\CURVE)$ and ask if these determine a path up to $\SOdR$ and tree-like extensions. The invariant sets we construct are smooth functions in the coordinates of $\log(\IIS(\CURVE))$, though in many cases we can, by inspection, find an equivalently generating polynomial set (see Section \ref{sec:inv.planar}). Polynomials in coordinates of $\log(\IIS(\CURVE))$ correspond to polynomial invariants in the coordinates of $\IIS(\CURVE)$, which yield linear $\OdR$-invariants by the shuffle relations. Thus the conjecture remains open, and more broadly the connection between the two sets of invariants should be explored.

In \Cref{sec:complexdetour}, we investigate sets of separating sets of rational and polynomial invariants for the action of $\OdR$ on $\lie{2}d$. An open question is whether the polynomial invariants we construct, generate the \textit{ring} of polynomial invariants for this action. In even more generality questions remain about the relationship between the polynomial invariants we construct and the ring of polynomial invariants for the action of $\OdR$ on $\lie{n}d$.

Additionally we only consider $\OdR$-invariants (and to a lesser extend $\SO_d(\R)$) in this work. The dimension of $\OdR$ implies that to construct a cross-section for the action, one only has to consider the action on $\lie{2}d$. For larger groups like $\GL_d(\R)$ one may have to construct a cross-section using coordinates on $\lie{3}d$.

The cross section $\mathcal{K}$ in Section \ref{sec:MF} can also be used as a starting point for groups containing $\OdR$, since any element of $\lie{2}d$ can be brought to $\mathcal{K}$ by an element of $\OdR$. For instance if one considers scaling transformations in addition to orthogonal transformations, changing the conditions of $c_{d}, c_{i(i+1)} >0$ on $\mathcal{K}$ to $c_{d}=c_{i(i+1)}=0$, for $1\leq i < d$, likely yields a cross-section.

In Section \ref{sec:towardpoly} we introduce Conjecture \ref{conj:almostpoly}, which we prove holds for $d=2$ in Theorem \ref{thm:polinvariants2d} and for $d=3$ in Section \ref{sec:inv.spatial}. In general the invariants produced by the moving frame method are only guaranteed to be smooth, and hence proving this conjecture for $d>3$ is of interest. In particular, polynomial invariant functions of iterated-integral values are desired because they can be expressed as elements of $T(\R^d)$, they provide the simplest and most structured (graded) way of looking at invariants, with an immediate connection to polynomial algebraic geometry, and it is widely assumed and partially proven that they are sufficient for characterization of orbits, see the discussion of the Diehl-Reizenstein conjecture in Section \ref{sec:towardpoly}. %

As mentioned in the introduction, there are many applications of the iterated-integrals signature of paths where finding $\OdR$-invariant features could be advantageous. It would be interesting to see if the sets of integral invariants constructed, or ``invariantization'' procedure outlined can be useful for such applications.

\bibliographystyle{alpha}
\bibliography{notes}	

\end{document}